\documentclass[10pt]{article}
\usepackage{amsmath}
\usepackage{amsfonts}
\usepackage{stmaryrd}
\usepackage{mathrsfs}
\usepackage{amssymb,amsfonts,amsmath, psfrag,eepic,colordvi,epsfig}
\usepackage{enumerate}
\usepackage{cite}
\usepackage{longtable}
\usepackage{hyperref}
\topskip 
\numberwithin{equation}{section}
\newtheorem{theorem}{Theorem}[section]

\newtheorem{remark}[theorem]{Remark}

\begin{document}
	\parskip 7pt
	
	\pagenumbering{arabic}
	\def\sof{\hfill\rule{2mm}{2mm}}
	\def\ls{\leq}
	\def\gs{\geq}
	\def\SS{\mathcal S}
	\def\qq{{\bold q}}
	\def\MM{\mathcal M}
	\def\TT{\mathcal T}
	\def\EE{\mathcal E}
	\def\lsp{\mbox{lsp}}
	\def\rsp{\mbox{rsp}}
	\def\pf{\noindent {\it Proof.} }
	\def\mp{\mbox{pyramid}}
	\def\mb{\mbox{block}}
	\def\mc{\mbox{cross}}
	\def\qed{\hfill \rule{4pt}{7pt}}
	\def\pf{\noindent {\it Proof.} }
	\textheight=21cm
	
	\begin{center}
		{\Large\bf   Proofs of some conjectures by Bringmann, Han, Heim, and Kane on periodic sign changes in 
		 eta-quotients }
	\end{center}
	
	\begin{center}
	
	 Jing Jin$^{1}$,       Olivia X.M. Yao$^{2}$
	  and Shiwen Zhou$^{3}$
	\\[6pt]

	$^{1}$College of Agricultural Information,\\
	Jiangsu Agri-animal
	Husbandry Vocational College,
	\\
	Taizhou, 225300,  Jiangsu,  P. R. China
	
	$^{2,3}$School of Mathematical Sciences, \\
	Suzhou University of Science and
	Technology, \\
	Suzhou,  215009, Jiangsu,
	P. R. China

	Email:  jinjing19841@126.com,    
	  yaoxiangmei@163.com,  2870279931@qq.com

\end{center}

	\noindent {\bf Abstract.}
	 Bringmann, Han, Heim, and Kane recently investigated periodic sign patterns  in the Fourier coefficients of
	weakly holomorphic modular forms arising from eta-quotients  and posed numerous conjectures in the
	appendix to their work. Using classical theta-function identities, dissections, and coefficientwise positivity,
	we prove 72 of these conjectural sign patterns. The eta-quotients considered here have periods 2, 3, 4, 5,
	6, 8, and 12. Our proofs are elementary at the level of $q$-series and yield explicit generating functions for
	the coefficients in each relevant residue class.

	\noindent {\bf Keywords:} eta-quotients, sign changes, 
	 theta function identities, periodic sign patterns, coefficientwise inequalities.

	\noindent {\bf AMS Subject
		Classification:} 11F11, 11F20, 33D15.
	
	\section{Introduction}
	\allowdisplaybreaks
	
	Throughout this paper, 
	 define 
	\begin{align}
		\sum_{n=0}^\infty 
		a_{1^{t_1}2^{t_2}\cdots m^{t_m}}(n)q^n=f_1^{t_1}f_2^{t_2}\cdots
		 f_m^{t_m}, \label{1-1}
	\end{align}
	where $m$ is a positive integer, 
	  $t_1,t_2,\ldots, t_m$ are integers and $f_j:=(q^j;q^j)_\infty$.
 Here and throughout, we assume that  $q$
 is a complex number with $|q|<1$ 
 and adopt the following standard
 $q$-series notation:
 \begin{align*}
 	(a;q)_\infty:=&\prod_{k=0}^\infty
 	(1-aq^k),\qquad
 	(a_1,a_2,\ldots ,a_k;q)_\infty :=(a_1;q)_\infty
 	 (a_2;q)_\infty \cdots (a_k;q)_\infty.
 \end{align*}

	 Many interesting examples of sign changes of the $	a_{1^{t_1}2^{t_2}\cdots m^{t_m}}(n)$ have 
   appeared  in the literature over the past thirty years. For example, 
   Andrews \cite{Andrews} studied the sign pattern of $a_{1^{1}p^{-1}}(n)$ and proved  that for all
   primes $p$, $a_{1^{1}p^{-1}}(n)$ and $a_{1^{1}p^{-1}}(n+p)$ have the same sign for each $n\geq 0$.  Schlosser and Zhou \cite{Schlosser} showed  that 
    for all primes $p$, $a_{1^{3}p^{-3}}(n)$ and $a_{1^{3}p^{-3}}(n+p)$ have the same sign for each $n\geq 0$.  For  positive integers $t$
   		and $m$    with $m(t-1)\leq 24$,  Wang \cite{Wang} gave an asymptotic formula for
   	$a_{1^{m}t^{-m}}(n)$, and presented characterizations of $n$ for which  	$a_{1^{m}t^{-m}}(n)$ is positive, negative or zero.
   	 In 2000, Andrews and Lewis \cite{Andrews-Lewis} conjectured that 
   	 \begin{align*}
   	 		  	a_{1^23^{-1}}(n) 
   	 	\left\{\begin{array}{lll}
   	 		  >0,
   	 		& \text {\rm  if }
   	 		3|n\ {\rm or}\ n=5, \\
   	 		  =0,
   	 		& \text {\rm  if }
   	 		 \ n\in\{14,17\}, \\
   	 		<0, & \text {\rm otherwise.}
   	 	\end{array}\right.  
   	 \end{align*}
This conjecture was confirmed by Kane \cite{Kane}. 

 Very recently,
 Bringmann, Han, Heim and Kane \cite{Bringmann-1} studied the signs of $a_{1^{t_1}2^{t_2}\cdots m^{t_m}}(n)$
  and 
  gave representative examples for forms of negative weight, weight
 zero, and positive weight. For example, they showed that 
  for $n\geq 0$, 
     	 \begin{align*}
  	a_{1^93^{-5}}(n) 
  	\left\{\begin{array}{lll}
  		>0,
  		& \text {\rm  if }\ 
  		 n\equiv 0,2,5,6,8\pmod 9, \\
  		<0,
  	& \text {\rm  if }\ 
  	n\equiv1,3,4,7\pmod 9. 
  	\end{array}\right.  
  \end{align*}
  
  In a separate paper, Bringmann, Han, Heim, and Kane  \cite{Bringmann-2}  investigated sign changes in an infinite family of
  holomorphic eta-quotients. In Appendix A of   \cite{Bringmann-1},  the same authors listed numerous conjectural period sign
  patterns for weakly holomorphic eta-quotients. In the present paper, we establish 72 of those patterns by
  combining classical theta-function identities with coefficientwise positivity arguments. The results are
  organized by period: Section 2 treats period 2, Section 3 period 3, Section 4 period 4, Section 5 period 5,
  Section 6 period 6, Section 7 period 8, and Section 8 period 12.
  
  For reference, Table~\ref{tab:proved-cases} records precisely the 72 cases from Appendix A of
  \cite{Bringmann-1} that are proved below. In the sign-pattern column, the symbols are ordered by
  residue class modulo the displayed period; factors with exponent zero are omitted from the case notation.
  Each row corresponds to one theorem, and the final column gives the number of cases covered by that theorem.
  
  \begingroup
  \footnotesize
  \setlength{\tabcolsep}{3pt}
  \renewcommand{\arraystretch}{1.08}
  \begin{longtable}{|c|c|c|p{7.2cm}|c|}
  	\caption{The 72 proved cases, grouped by theorem.}
  	\label{tab:proved-cases}\\
  	\hline
  	Theorem & Period & Sign pattern & \multicolumn{1}{c|}{Cases} & Count\\
  	\hline
  	\endfirsthead
  	\multicolumn{5}{c}{\tablename~\thetable\ (continued)}\\
  	\hline
  	Theorem & Period & Sign pattern & \multicolumn{1}{c|}{Cases} & Count\\
  	\hline
  	\endhead
  	\hline
  	\multicolumn{5}{r}{\footnotesize Continued on next page}\\
  	\endfoot
  	\hline
  	\endlastfoot
  	Theorem~\ref{Th-1} & 2 & \texttt{+-} &
  	\(1^{2}2^{-3}3^{-3}\), \(1^{2}2^{-2}3^{4}\), \(1^{3}2^{-2}\),
  	\(1^{3}2^{-2}3^{1}\), \(1^{3}2^{-2}3^{2}\), \(1^{3}2^{-2}3^{3}\),
  	\(1^{4}2^{-4}3^{-4}\), \(1^{4}2^{-3}3^{4}\) & 8\\ \hline
  	Theorem~\ref{Th-2} & 3 & \texttt{+0-} &
  	\(2^{2}3^{-1}4^{-1}\), \(2^{3}3^{-1}\) & 2\\ \hline
  	Theorem~\ref{Th-3} & 3 & \texttt{+-0} &
  	\(1^{3}3^{-2}\), \(1^{1}2^{-1}3^{-2}4^{1}\), \(1^{2}2^{-1}3^{-2}\) & 3\\ \hline
  	Theorem~\ref{Th-4} & 3 & \texttt{+--} &
  	\(1^{1}2^{1}3^{-2}\) & 1\\ \hline
  	Theorem~\ref{Th-4-1} & 4 & \texttt{+++-} &
  	\(1^{-1}2^{1}3^{3}4^{-3}\), \(1^{-3}2^{8}3^{1}4^{-7}\) & 2\\ \hline
  	Theorem~\ref{Th-4-2} & 4 & \texttt{++--} &
  	\(1^{-1}2^{3}3^{1}4^{-3}\), \(1^{-1}2^{4}3^{1}4^{-3}\),
  	\(1^{-1}2^{4}4^{-4}\) & 3\\ \hline
  	Theorem~\ref{Th-4-3} & 4 & \texttt{++0+} &
  	\(1^{-1}2^{2}4^{-2}5^{1}\) & 1\\ \hline
  	Theorem~\ref{Th-4-4} & 4 & \texttt{+--+} &
  	\(1^{1}3^{-1}4^{-3}\), \(1^{1}2^{1}3^{-1}4^{-3}\),
  	\(1^{1}2^{2}3^{-1}4^{-3}\), \(1^{1}2^{3}3^{-1}4^{-3}\),
  	\(1^{1}2^{4}3^{-1}4^{-4}\), \(1^{2}4^{-2}\),
  	\(1^{2}2^{1}4^{-2}\), \(1^{2}2^{2}4^{-2}\),
  	\(1^{2}2^{3}4^{-3}\), \(1^{2}2^{4}4^{-3}\),
  	\(1^{1}2^{1}4^{-3}\), \(1^{1}2^{2}4^{-3}\),
  	\(1^{1}2^{3}4^{-3}\), \(1^{1}2^{4}4^{-3}\) & 14\\ \hline
  	Theorem~\ref{Th-4-5} & 4 & \texttt{+---} &
  	\(1^{1}3^{1}4^{-3}\) & 1\\ \hline
  	Theorem~\ref{Th-4-6} & 4 & \texttt{+-00} &
  	\(1^{2}2^{-1}4^{-1}\) & 1\\ \hline
  	Theorem~\ref{Th-4-7} & 4 & \texttt{+--0} &
  	\(1^{2}3^{2}4^{-3}\) & 1\\ \hline
  	Theorem~\ref{Th-4-8} & 4 & \texttt{+-++} &
  	\(1^{3}2^{-1}3^{-1}4^{-2}\), \(1^{4}2^{-1}4^{-3}\),
  	\(1^{4}4^{-6}\), \(1^{4}2^{1}4^{-9}\) & 4\\ \hline
  	Theorem~\ref{Th-4-9} & 4 & \texttt{+-0+} &
  	\(1^{3}3^{-1}4^{-4}\) & 1\\ \hline
  	Theorem~\ref{Th-4-10} & 4 & \texttt{+-+0} &
  	\(1^{4}2^{-2}4^{-1}\) & 1\\ \hline
  	Theorem~\ref{Th-5-1} & 5 & \texttt{++0+0} &
  	\(1^{-1}2^{2}5^{-1}\) & 1\\ \hline
  	Theorem~\ref{Th-5-2} & 5 & \texttt{++-00} &
  	\(1^{-1}2^{3}4^{-1}5^{-2}\) & 1\\ \hline
  	Theorem~\ref{Th-5-3} & 5 & \texttt{+0-0-} &
  	\(2^{1}5^{-1}\) & 1\\ \hline
  	Theorem~\ref{Th-5-4} & 5 & \texttt{+-00+} &
  	\(1^{2}2^{-1}5^{-2}\) & 1\\ \hline
  	Theorem~\ref{Th-6-1} & 6 & \texttt{+-+-+0} &
  	\(1^{1}2^{-4}3^{-3}4^{4}\), \(1^{3}2^{-3}3^{-1}4^{3}\) & 2\\ \hline
  	Theorem~\ref{Th-6-2} & 6 & \texttt{+-+0+0} &
  	\(1^{1}2^{-3}3^{-3}4^{2}\) & 1\\ \hline
  	Theorem~\ref{Th-6-3} & 6 & \texttt{+-++0-} &
  	\(1^{1}2^{-2}3^{-3}\) & 1\\ \hline
  	Theorem~\ref{Th-7-1} & 8 & \texttt{+-++0-0+} &
  	\(1^{4}4^{-5}\) & 1\\ \hline
  	Theorem~\ref{Th-7-2} & 8 & \texttt{+-++---+} &
  	\(1^{4}4^{-4}\), \(1^{4}4^{-3}\), \(1^{4}4^{-2}\),
  	\(1^{4}2^{1}4^{-6}\), \(1^{4}2^{1}4^{-5}\),
  	\(1^{4}2^{1}4^{-4}\), \(1^{4}2^{1}4^{-3}\) & 7\\ \hline
  	Theorem~\ref{Th-7-3} & 8 & \texttt{+-0+0-0+} &
  	\(1^{4}2^{2}4^{-10}\) & 1\\ \hline
  	Theorem~\ref{Th-7-4-0} & 8 & \texttt{+-0+--0+} &
  	\(1^{4}2^{2}4^{-9}\), \(1^{4}2^{2}4^{-8}\),
  	\(1^{4}2^{2}4^{-7}\), \(1^{4}2^{2}4^{-6}\),
  	\(1^{4}2^{2}4^{-5}\), \(1^{4}2^{2}4^{-4}\),
  	\(1^{4}2^{2}4^{-3}\), \(1^{4}2^{2}4^{-2}\) & 8\\ \hline
  	Theorem~\ref{Th-7-4} & 8 & \texttt{+--+--++} &
  	\(1^{4}2^{3}4^{-4}\), \(1^{4}2^{4}4^{-4}\),
  	\(1^{4}2^{4}4^{-3}\) & 3\\ \hline
  	Theorem~\ref{Th-8-1} & 12 & \texttt{++0-0+--0+0-} &
  	\(1^{-3}2^{9}3^{1}4^{-6}\) & 1\\
  \end{longtable}
  \endgroup

	\section{Sign changes with period 2}
	
	In this section, we prove several results 
	 on sign changes with period 2 which were conjectured
	  by  Bringmann, Han, Heim and Kane
	  \cite{Bringmann-1}.
	  
     	  Let $f(q)=\sum_{n=0}^\infty u(n) q^n$
 and $g(q)=\sum_{n=0}^\infty v(n) q^n$  be two formal power series in $q$. Throughout this paper,  we write $f(q)\succcurlyeq g(q)$ (resp. $f(q) \preccurlyeq  g(q)$)
 if for all integers $n\geq 0$, $u(n)\geq v(n)$ (resp. $u(n) \leq 
 v(n)$). We require  the following fact.  Let $M$ be a positive  integer and let $F(q)$ be a  formal power series with nonnegative coefficients.
  If $F(q)\succcurlyeq 1+q+\cdots +q^{M-1}$,
   then 
   \[
   \frac{F(q)}{1-q^M} \succcurlyeq \sum_{n=0}^\infty q^n, \qquad 
     - \frac{F(q)}{1-q^M} \preccurlyeq - \sum_{n=0}^\infty q^n.
   \]
	
	\begin{theorem}\label{Th-1}
		Define
		\[S_1:=\{(2,-3,-3),(2,-2,4),(3,-2,0),(3,-2,1),(3,-2,2), (3,-2,3),(4,-4,-4),(4,-3,4)\}.\]
		If $(t_1,t_2,t_3)\in S_1$, then for $n\geq 0$,
		\begin{align*}
		a_{1^{t_1}2^{t_2}3^{t_3}}(2n)&>0, \quad 
			a_{1^{t_1}2^{t_2}3^{t_3}}(2n+1)<0. 
		\end{align*}
	\end{theorem}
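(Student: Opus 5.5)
The pattern \texttt{+-} says exactly that $(-1)^n a_{1^{t_1}2^{t_2}3^{t_3}}(n)>0$ for all $n\ge 0$. So the plan is to replace $q$ by $-q$ and prove that the resulting series has strictly positive coefficients. Under $q\mapsto -q$ we have
\[
f_1\mapsto \frac{f_2^3}{f_1f_4},\qquad f_2\mapsto f_2,\qquad f_3\mapsto \frac{f_6^3}{f_3f_{12}} .
\]
Hence the statement is equivalent to
\[
G_{t_1,t_2,t_3}(q):=\frac{f_2^{3t_1+t_2}f_6^{3t_3}}{f_1^{t_1}f_4^{t_1}f_3^{t_3}f_{12}^{t_3}}\succcurlyeq \sum_{n\ge0}q^n
\]
for each $(t_1,t_2,t_3)\in S_1$.

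I will rewrite each $G$ as a product of series with nonnegative coefficients. The building blocks are
\[
\varphi(q)=\frac{f_2^5}{f_1^2f_4^2}, \qquad \psi(q)=\frac{f_2^2}{f_1},
\]
the analogous series $\varphi(q^3),\psi(q^3)$, and the factors $1/f_k$, all of which have nonnegative coefficients.

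For example, when $(t_1,t_2,t_3)=(3,-2,0)$ we get
\[
G=\frac{f_2^7}{f_1^3f_4^3}=\varphi(q)\,\psi(q)\,\frac{1}{f_4}.
\]
Here $\varphi(q)\psi(q)=(1+2q+2q^4+\cdots)(1+q+q^3+\cdots)\succcurlyeq 1+q+q^2+q^3$. Also $1/f_4\succcurlyeq 1/(1-q^4)$, and the other factors have nonnegative coefficients. So the fact quoted before the theorem, with $M=4$, gives $G\succcurlyeq\sum_{n\ge0} q^n$.

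The cases $t_3=1,2,3,4$ work the same way. The extra factor $f_6^{3t_3}/(f_3^{t_3}f_{12}^{t_3})$ is built from $\psi(q^3)$, $\varphi(q^3)$ and $1/f_3$, $1/f_{12}$ with nonnegative coefficients. For instance, with $t_3=4$ the factor $f_6^{12}/(f_3^4f_{12}^4)$ equals $\varphi(q^3)^2 f_6^2/f_3^{0}$ up to such positive factors. In each case I would isolate one factor equal to $1+q+\cdots+q^{M-1}$ plus nonnegative terms, together with a factor $1/(1-q^M)$ coming from some $1/f_k$, and then apply the quoted fact.

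The main obstacle is the pair of cases with $t_3=-3$, namely $(2,-3,-3)$ and $(4,-4,-4)$. There $G$ contains $f_3^3f_{12}^3/f_6^9$, and this factor does not have nonnegative coefficients. For these cases I would first try to absorb it using $\varphi(-q^3)=f_3^2/f_6$ and $\psi(-q^3)=f_3f_{12}/f_6$. I would then perform a 3-dissection of the $f_1,f_2,f_4$ part, for instance via the 3-dissections of $\psi(q)$ and $\varphi(q)$, or of $f_2^2/f_1$ and $f_1f_2$. Using these I would write $G$ as $\sum_{r=0}^{2}q^r H_r(q^3)$. The aim is to cancel the negative parts against the $f_3$-type factors, so that each $H_r$ becomes an explicit product of theta functions and reciprocal eta-products. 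Positivity of each $H_r$ would then follow as above, again via the quoted $1/(1-q^M)$ fact. If cancellation fails, the fallback is to prove a coefficientwise inequality that dominates the negative contributions by the positive ones.
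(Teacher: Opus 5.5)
Your reduction to $G_{t_1,t_2,t_3}\succcurlyeq\sum_{n\ge0}q^n$ and your treatment of $(3,-2,0)$ are correct and match the paper. The gap is your claim that the other cases go through with the $f_1,f_2,f_4$-part and the $f_3,f_6,f_{12}$-part handled separately. This fails for every $t_3\in\{1,2,3,4\}$.

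In each such case the factor $f_6^{3t_3}/(f_3^{t_3}f_{12}^{t_3})$ is just $f_3^{t_3}$ with $q\mapsto -q$, and it has negative coefficients. For example, $f_6^3/(f_3f_{12})=1+q^3-q^6+\cdots$, and in your own $t_3=4$ example the leftover $f_6^2$ is never absorbed. So this factor cannot be assembled from $\psi(q^3)$, $\varphi(q^3)$, $1/f_3$, $1/f_{12}$. The missing idea is to pair $f_3$'s with $f_1$'s across the two moduli, using the Jacobi triple product identities
\[
\sum_{n=-\infty}^{\infty}q^{3n^2+2n}=\frac{f_2^2f_3f_{12}}{f_1f_4f_6},\qquad
\sum_{n=-\infty}^{\infty}q^{n(3n+1)/2}=\frac{f_2f_3^2}{f_1f_6},
\]
together with $f_6^3/f_2=\sum_{n\ge0}c_3(n)q^{2n}$.

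With these, the cases you regarded as hardest are immediate and need no $3$-dissection:
\[
G_{4,-4,-4}=\frac{\bigl(\sum q^{3n^2+2n}\bigr)^4}{f_6^8},\qquad
G_{2,-3,-3}=\frac{\bigl(\sum q^{n(3n+1)/2}\bigr)\bigl(\sum q^{3n^2+2n}\bigr)}{f_4f_6^5(q^6;q^{12})_\infty^2}.
\]
Note that $t_3=-4$ for $(4,-4,-4)$, not $-3$. Similarly, $G_{3,-2,1}=\varphi(q)\psi(q^3)^2\bigl(\sum q^{3n^2+2n}\bigr)/f_{12}^2$ and $G_{2,-2,4}=\varphi(q)\varphi(q^3)^2\,(f_6^3/f_2)/f_6$. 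Your $3$-dissection plan for $t_3<0$ is never actually carried out. Moreover, the tools you name for it, $\varphi(-q^3)$ and $\psi(-q^3)$, themselves have negative coefficients. So as written those cases are not proved.

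The case $(3,-2,3)$ is a more serious obstruction for your scheme. There
\[
G_{3,-2,3}=\varphi(q)\varphi(q^3)^2\sum_{n}q^{3n^2+2n}
\]
exactly: a weight-$2$ product of four theta series with no reciprocal eta factor left over. The device ``show $F\succcurlyeq 1+q+\cdots+q^{M-1}$, then divide by $1-q^M$'' therefore has nothing to act on. Positivity of every coefficient is equivalent to the statement that $x^2+3y^2+3z^2+3w^2+2w$ represents every nonnegative integer. This is a genuinely arithmetic fact, which the paper imports from Ju and Oh. Your proposal contains no ingredient of this kind, so the assertion that $t_3=3$ ``works the same way'' does not hold up.
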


\noindent{\it Proof.}
	In view of \eqref{1-1}, 
 \begin{align}
 \sum_{n=0}^\infty a_{1^22^{-3}3^{-3}}(n)q^n =\frac{f_1^2}{f_2^3f_3^3}.\label{2-3}
 \end{align}
 Replacing $q$ by $-q$ in \eqref{2-3} and using 
  the following identity 
 \[
 (-q;-q)_\infty =\frac{f_2^3}{f_1f_4},
 \]
 we arrive at 
\begin{align}
 \sum_{n=0}^\infty (-1)^n a_{1^22^{-3}3^{-3}}(n)q^n =
 \frac{f_2f_3^2}{f_1f_6}\cdot
 \frac{f_2^2f_3f_{12}}{f_1f_4f_6} \cdot \frac{1}{f_4}
 \cdot \frac{1}{f_6^5}\cdot \frac{1}{(q^6;q^{12})_\infty^2}.\label{2-4}
 \end{align}
By the Jacobi triple product identity,  
\begin{align}\label{2-5}
\sum_{n=-\infty}^\infty q^{3n^2+2n}=\frac{f_2^2f_3f_{12}}{f_1f_4f_6} 
 \end{align}
and 
\begin{align}\label{2-6}
\sum_{n=-\infty}^\infty q^{n(3n+1)/2}=\frac{f_2f_3^2}{f_1f_6} .
 \end{align}
 In view of \eqref{2-4}--\eqref{2-6}, 
 \begin{align*}
 	\sum_{n=0}^\infty (-1)^n a_{1^22^{-3}3^{-3}}(n)q^n& =
 \left(\sum_{n=-\infty}^\infty q^{n(3n+1)/2}\right) \cdot
 	\left(\sum_{n=-\infty}^\infty q^{3n^2+2n}\right) \cdot \frac{1}{f_4}
 	\cdot \frac{1}{f_6^5}\cdot \frac{1}{(q^6;q^{12})_\infty^2}
 	\nonumber\\
 	&\succcurlyeq (1+q+q^2)(1+q) \cdot \frac{1}{1-q^4}
 	\succcurlyeq \sum_{n=0}^\infty q^n , 
 \end{align*}
 which implies 
  that Theorem \ref{Th-1} is  true 
   when $(t_1,t_2,t_3 )=(2,-3,-3)$.

By \eqref{1-1}, 
 \begin{align}
\sum_{n=0}^\infty a_{1^22^{-2}3^4}(n)q^n
  =\frac{f_1^2f_3^4}{f_2^2}.\label{2-8}
 \end{align}
 Replacing $q$ by $-q$ in \eqref{2-8}
  yields 
  \begin{align}
\sum_{n=0}^\infty(-1)^n  a_{1^22^{-2}3^4}(n)q^n
=\frac{f_2^5}{f_1^2f_4^2} \cdot \frac{f_6^{10}}{f_3^4f_{12}^4}
\cdot \frac{f_6^3}{f_2} \cdot \frac{1}{f_6}.\label{2-9}
\end{align}
Gauss proved that 
\begin{align}\label{2-10}
	\sum_{n=-\infty}^\infty q^{n^2}=\frac{f_2^5}{f_1^2f_4^2} 
\end{align}
and 
\begin{align}\label{2-10-1}
	\sum_{n=0}^\infty q^{n(n+1)/2}=\frac{f_2^2}{f_1 } .
\end{align}
Moreover, let $c_t(n)$ denote the number of $t$-core partitions
 of $n$. The generating function
  of $c_t(n) $ is
  \begin{align}
  	\sum_{n=0}^\infty c_t(n)q^n =\frac{f_t^t}{f_1}.\label{2-11}
  \end{align}

  Combining \eqref{2-9}, \eqref{2-10} and \eqref{2-11} yields
    \begin{align*}
  	\sum_{n=0}^\infty(-1)^n  a_{1^22^{-2}3^4}(n)q^n
  	&=\left(\sum_{n=-\infty}^\infty q^{n^2}\right)\cdot \left(\sum_{n=-\infty}^\infty q^{3n^2}\right)^2
  	\cdot \left(\sum_{n=0}^\infty c_3(n)q^{2n} \right)\cdot \frac{1}{f_6}\nonumber\\
  	&\succcurlyeq (1+2q+2q^4)(1+2q^3)^2(1+q^2)\cdot
  	\frac{1}{1-q^6} \succcurlyeq \sum_{n=0}^\infty q^n,  
  \end{align*}
   which implies 
  that Theorem \ref{Th-1} is true 
  when $(t_1,t_2,t_3 )=(2,-2,4)$.

Thanks to \eqref{1-1}, 
    \begin{align}\label{2-13}
\sum_{n=0}^\infty a_{1^32^{-2}}(n)q^n
=\frac{f_1^3}{f_2^2}.
\end{align}
Replacing $q$ by $-q$ in \eqref{2-13}
 and using \eqref{2-10} and \eqref{2-10-1} yields 
    \begin{align*}
\sum_{n=0}^\infty (-1)^n a_{1^32^{-2}}(n)q^n
&=\frac{f_2^5}{f_1^2f_4^2}\cdot \frac{f_2^2}{f_1}\cdot \frac{1}{f_4}
\nonumber\\
&=\left(\sum_{n=-\infty}^\infty q^{n^2}\right)\cdot \left(\sum_{n=0}^\infty q^{n(n+1)/2}\right)\cdot \frac{1}{f_4}
\nonumber\\
&\succcurlyeq (1+2q+2q^4)(1+q+q^3)\cdot \frac{1}{1-q^4}\succcurlyeq 
\sum_{n=0}^\infty q^n,
\end{align*}
 which implies 
that Theorem \ref{Th-1}  holds 
when $(t_1,t_2,t_3 )=(3,-2,0)$.

In light of \eqref{1-1},
\begin{align}\label{2-16}
\sum_{n=0}^\infty a_{1^32^{-2}3^1}(n)q^n
=\frac{f_1^3f_3}{f_2^2}.
\end{align}
Replacing $q$ by $-q$ in \eqref{2-16}
 and utilizing \eqref{2-5}, \eqref{2-10} and  \eqref{2-10-1}, we obtain 
\begin{align*} 
\sum_{n=0}^\infty (-1)^n a_{1^32^{-2}3^1}(n)q^n
&= \frac{f_2^2f_3f_{12}}{f_1f_4f_6}\cdot \frac{f_2^5}{f_1^2f_4^2} 
\cdot \frac{f_6^4}{f_3^2}\cdot \frac{1}{f_{12}^2}
\nonumber\\
&=\left(\sum_{n=-\infty}^\infty q^{3n^2+2n}\right)
\cdot \left(\sum_{n=-\infty}^\infty q^{n^2}\right)
\left(\sum_{n=0}^\infty q^{3n(n+1)/2}\right)^2\cdot \frac{1}{f_{12}^2}
\nonumber\\
&\succcurlyeq (1+q+q^5+q^8)(1+2q+2q^4+2q^9) (1+q^3)^2\cdot
\frac{1}{1-q^{12}} \succcurlyeq \sum_{n=0}^\infty q^n,
\end{align*}
 which implies 
that Theorem \ref{Th-1} is true 
when $(t_1,t_2,t_3 )=(3,-2,1)$.

In view of \eqref{1-1}, 
\begin{align}\label{2-18}
\sum_{n=0}^\infty a_{1^32^{-2}3^2}(n)q^n
=\frac{f_1^3f_3^2}{f_2^2}.
\end{align}
Replacing $q$ by $-q$ in \eqref{2-18} 
 and using \eqref{2-5}, \eqref{2-10}
  and \eqref{2-10-1} yields 
\begin{align*} 
\sum_{n=0}^\infty (-1)^n a_{1^32^{-2}3^2}(n)q^n
&=\frac{f_2^5}{f_1^2f_4^2} \cdot \frac{f_6^5}{
f_3^2f_{12}^2}\cdot \frac{f_2^2f_3f_{12}}{f_1f_4f_6}\cdot \frac{f_6^2}{f_3}\cdot\frac{1}{f_{12}}\nonumber\\
&=\left(\sum_{n=-\infty}^\infty q^{n^2}\right) \cdot\left(\sum_{n=-\infty}^\infty q^{3n^2}\right)\cdot \left(\sum_{n=-\infty}^\infty q^{3n^2+2n}\right)\cdot \left(\sum_{n=0}^\infty q^{3n(n+1)/2}\right)\cdot\frac{1}{f_{12}}
\nonumber\\
&\succcurlyeq \left(1+2q+2q^4+2q^9 \right) \cdot\left(1+2q^3 \right)\cdot \left(1+q+q^5 +q^8 \right)\cdot \left(1+q^3 \right)\cdot\frac{1}{1-q^{12}}
\nonumber\\
&\succcurlyeq \sum_{n=0}^\infty q^n ,
\end{align*}
 which implies 
that Theorem \ref{Th-1} is true 
when $(t_1,t_2,t_3 )=(3,-2,2)$.

Thanks to \eqref{1-1}, 
\begin{align}\label{2-20}
\sum_{n=0}^\infty a_{1^32^{-2}3^3}(n)q^n
=\frac{f_1^3f_3^3}{f_2^2}.
\end{align}
Replacing $q$ by $-q$ in \eqref{2-20}
 and using \eqref{2-5} and \eqref{2-10}, we arrive at 
\begin{align}\label{2-21}
\sum_{n=0}^\infty (-1)^n a_{1^32^{-2}3^3}(n)q^n
&=\frac{f_2^5}{f_1^2f_4^2}
\cdot \frac{f_6^{10}}{f_3^4f_{12}^4} \cdot \frac{f_2^2f_3f_{12}}{f_1f_4f_6}
\nonumber\\
&=
\left(\sum_{n=-\infty}^\infty q^{n^2}\right) \cdot\left(\sum_{n=-\infty}^\infty q^{3n^2}\right)^2
 \cdot\left(\sum_{n=-\infty}^\infty q^{3n^2+2n}\right)
\end{align}
In \cite{Ju}, Ju and Oh proved that 
\begin{align}\label{2-22}
	\left(\sum_{n=-\infty}^\infty q^{n^2}\right) \cdot\left(\sum_{n=-\infty}^\infty q^{3n^2}\right)^2
	\cdot\left(\sum_{n=-\infty}^\infty q^{3n^2+2n}\right)\succcurlyeq
	\sum_{n=0}^\infty q^n. 
\end{align}
It follows from \eqref{2-21}
 and \eqref{2-22} that 
\begin{align*} 
	\sum_{n=0}^\infty (-1)^n a_{1^32^{-2}3^3}(n)q^n
	\succcurlyeq
	\sum_{n=0}^\infty q^n. 
\end{align*}
Thus,  Theorem \ref{Th-1} is true 
when $(t_1,t_2,t_3 )=(3,-2,3)$.

In view of \eqref{1-1},
\begin{align}\label{2-24}
\sum_{n=0}^\infty a_{1^42^{-4}3^{-4}}(n)q^n
=\frac{f_1^4 }{f_2^4f_3^4}.
\end{align}
Replacing $q$ by $-q$
 in \eqref{2-24} and using \eqref{2-5}, we have 
\begin{align*} 
\sum_{n=0}^\infty (-1)^n a_{1^42^{-4}3^{-4}}(n)q^n
&=\left(\frac{f_2^2f_3f_{12}}{f_1f_4f_6}\right)^4\frac{1}{f_6^8}
\nonumber\\
&=\left(\sum_{n=-\infty }^\infty q^{3n^2+2n}\right)^4\frac{1}{f_6^8}
\nonumber\\
&\succcurlyeq (1+q+q^5)^4\cdot \frac{1}{1-q^6} \succcurlyeq \sum_{n=0}^\infty q^n,
\end{align*}
 which implies 
that Theorem \ref{Th-1} is true 
when $(t_1,t_2,t_3 )=(4,-4,-4)$.

In light of \eqref{1-1},
\begin{align}\label{2-26}
\sum_{n=0}^\infty a_{1^42^{-3}3^{4}}(n)q^n
=\frac{f_1^4 f_3^4}{f_2^3 }.
\end{align}
Replacing $q$ by $-q$  in \eqref{2-26} and using \eqref{2-10} and \eqref{2-11},
 we get 
\begin{align*} 
\sum_{n=0}^\infty (-1)^n a_{1^42^{-3}3^{4}}(n)q^n
&=\frac{f_2^{10}}{f_1^4f_4^4}
\frac{f_6^{10}}{f_3^4f_{12}^4} \frac{f_6^3}{f_2} \frac{1}{f_6}
\nonumber\\
&=\left(\sum_{n=-\infty}^\infty q^{n^2}\right)^2 \cdot  \left(\sum_{n=-\infty}^\infty q^{3n^2}\right)^2
\cdot \left(\sum_{n=0}^\infty c_3(n)q^{2n}\right) \cdot  \frac{1}{f_6}
\nonumber\\
&\succcurlyeq (1+2q+2q^4)^2 (1+2q^3)^2(1+q^2)\cdot \frac{1}{1-q^6} \succcurlyeq
\sum_{n=0}^\infty q^n,
\end{align*}
 which implies 
that Theorem \ref{Th-1} is  true 
when $(t_1,t_2,t_3 )=(4,-3,4)$.  This completes
 the proof of Theorem \ref{Th-1}. \qed

	\section{Sign changes with period 3}
	
	The aim of this section is to confirm several  conjectures 
 of  Bringmann, Han, Heim and Kane
 \cite{Bringmann-1}	on sign changes with period 3. 

\begin{theorem}\label{Th-2}
	Define
	\[
	S_2:=\{(2,-1,-1),(3,-1,0)\}.
	\]
If $(t_2,t_3,t_4)\in S_2$, then for $n\geq 0$,
	\begin{align*}
		a_{2^{t_2}3^{t_3}4^{t_4}}(3n) >0,\quad 
		a_{2^{t_2}3^{t_3}4^{t_4}}(3n+1) =0,\quad a_{2^{t_2}3^{t_3}4^{t_4}}(3n+2) <0.
	\end{align*}
\end{theorem}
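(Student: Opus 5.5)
The plan is to 3-dissect each eta-quotient explicitly, so that the generating function of each residue class modulo $3$ becomes a product with visibly nonnegative (or zero) coefficients. Then, as in the proof of Theorem~\ref{Th-1}, I would use the fact recorded at the start of Section~2 to get strict positivity.

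\emph{Case $(t_2,t_3,t_4)=(3,-1,0)$.} Here the series is $f_2^3/f_3$. I would start from the classical 3-dissection of Jacobi's cube,
\[
f_1^3=f_3\,a(q^3)-3q f_9^3,\qquad a(q)=\sum_{m,n=-\infty}^\infty q^{m^2+mn+n^2}.
\]
Replacing $q$ by $q^2$ gives
\[
\frac{f_2^3}{f_3}=\frac{f_6}{f_3}\,a(q^6)-3q^2\frac{f_{18}^3}{f_3}.
\]
The first term involves only powers $q^{3n}$, the second only powers $q^{3n+2}$, and nothing lands in the class $3n+1$; hence $a(3n+1)=0$. This also agrees with the fact that $n(n+1)\not\equiv 1\pmod 3$ in $f_2^3=\sum_{n\ge0}(-1)^n(2n+1)q^{n(n+1)}$.

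For the class $3n$ I would write
\[
\sum_{n\ge0} a(3n)q^n=\frac{a(q^2)}{(q;q^2)_\infty}\succcurlyeq (1+6q^2)\cdot\frac{1}{1-q}\succcurlyeq\sum_{n\ge0}q^n .
\]
For the class $3n+2$ I would write
\[
\sum_{n\ge0} a(3n+2)q^n=-3\,\frac{f_6^3}{f_1}=-3\left(\sum_{n\ge0}c_3(n)q^{2n}\right)\frac{f_2}{f_1},
\]
using \eqref{2-11} with $t=3$. Since $f_2/f_1=1/(q;q^2)_\infty\succcurlyeq 1/(1-q)$, this series is $\preccurlyeq -3\sum_{n\ge0}q^n$, so $a(3n+2)<0$.

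\emph{Case $(t_2,t_3,t_4)=(2,-1,-1)$.} Here the series is $f_2^2/(f_3f_4)=\varphi(-q)/f_3$, with $\varphi(-q)=\sum_n(-1)^nq^{n^2}$. I would 3-dissect $\varphi(-q)$ by splitting $n=3m$ and $n=3m\pm1$:
\[
\varphi(-q)=\varphi(-q^9)-2q\sum_{m=-\infty}^\infty(-1)^m q^{9m^2+6m}.
\]
Each piece would then be converted into an eta-quotient via the Jacobi triple product, giving something like $f_9^2/f_{18}$ for the first piece and a quotient in $f_6,f_{12},f_{18},f_{36}$ for the second. Positivity would be handled exactly as in the first case.

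\emph{The main obstacle} is a mismatch of residue classes in this case. Because $n^2\not\equiv 2\pmod 3$, this dissection produces a vanishing class $3n+2$ and a nonzero class $3n+1$. Indeed, literally $a_{2^23^{-1}4^{-1}}(1)=-2$, since $1/f_3$ contributes nothing to $q^1$. So before writing the proof I would recheck this case against Appendix~A of \cite{Bringmann-1}: the indexing of the exponents, or whether the displayed pattern \texttt{+0-} is meant for a shifted or relabeled series. After that, I would adapt the dissection to the correct quotient.

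If the intended pattern is \texttt{+-0}, the remaining work is routine. The class $3n$ would be $\varphi(-q^3)/f_1$. Its positivity should follow from writing $\varphi(-q^3)/f_1=\dfrac{f_3^2}{f_1 f_6}$ times a positive factor, or from the Gauss identity \eqref{2-10} after a further $q\to-q$ manipulation. The class $3n+1$ would be minus a product of theta series with nonnegative coefficients divided by $f_1$. In each case I would finish by bounding below by $\sum_{n\ge0} q^n$ via the fact recorded at the start of Section~2.
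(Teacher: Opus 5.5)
Your argument for $(t_2,t_3,t_4)=(3,-1,0)$ is correct and is exactly the paper's: dissect $f_2^3$ via $f_1^3=f_3a(q^3)-3qf_9^3$, then read off $a(q^2)/(q;q^2)_\infty$ and $-3f_6^3/f_1$.

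The case $(2,-1,-1)$, however, is not proved. The obstacle you raise there comes from an error in your own computation. In fact $f_2^2/f_4=\varphi(-q^2)$, not $\varphi(-q)$; the latter equals $f_1^2/f_2$. Every factor of $f_2^2/(f_3f_4)$ is a power series in $q^2$, $q^3$, $q^4$, so $a_{2^23^{-1}4^{-1}}(1)=0$, not $-2$. More generally, the exponents have the form $2n^2+3k$, and $2n^2\equiv 0,2\pmod 3$. So it is the class $3n+1$ that vanishes, exactly as in the stated pattern \texttt{+0-}. Direct expansion gives $1,0,-2,1,0,-2,2,0,-2,3,0,-4,\dots$. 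Your fallback sketch for \texttt{+-0} concerns $\varphi(-q)/f_3=f_1^2/(f_2f_3)$, which is a different eta-quotient, so it does not address the statement. As written, the proposal casts doubt on a true statement instead of proving it.

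The repair is to run your own dissection with $q\to q^2$:
\[
\varphi(-q^2)=\varphi(-q^{18})-2q^2\sum_{m=-\infty}^\infty(-1)^mq^{18m^2+12m}.
\]
By the triple product, this is \eqref{3-3} with $q\to q^2$, and it gives
\[
\frac{f_2^2}{f_3f_4}=\frac{f_{18}^2}{f_3f_{36}}-2q^2\frac{f_6f_{36}^2}{f_3f_{12}f_{18}}.
\]
Write $a(n)$ for $a_{2^23^{-1}4^{-1}}(n)$. Then $a(3n+1)=0$,
\[
\sum_{n\ge0}a(3n)q^n=\frac{f_6^2}{f_1f_{12}}=\frac{1}{(q,q^2;q^3)_\infty f_{12}}\sum_{n\ge0}q^{3n(n+1)/2}\succcurlyeq\sum_{n\ge0}q^n,
\]
and
\[
\sum_{n\ge0}a(3n+2)q^n=-2\frac{f_2f_{12}^2}{f_1f_4f_6}=-\frac{2}{(q;q^2)_\infty f_4}\sum_{n\ge0}q^{3n(n+1)}\preccurlyeq-\sum_{n\ge0}q^n.
\]
This is precisely the paper's proof. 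With this one correction ($\varphi(-q^2)$ in place of $\varphi(-q)$), your plan goes through unchanged.
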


\noindent{\it Proof}. By \eqref{1-1}, 
	\begin{align}\label{3-2}
\sum_{n=0}^\infty a_{2^23^{-1}4^{-1}}(n)q^n=\frac{f_2^2}{f_3f_4}.
	\end{align}
It follows from \cite[Corollary (i), p. 49]{Berndt} that 
	\begin{align}\label{3-3}
\frac{f_1^2}{f_2}=\frac{f_9^2}{f_{18}}-2q\frac{f_3f_{18 }^2}{f_6f_9}.
\end{align}
Combining \eqref{3-2} and \eqref{3-3}
 yields 
\[ 	
 	\sum_{n=0}^\infty a_{2^23^{-1}4^{-1}}(n)q^n=\frac{f_{18}^2}{f_3f_{36}}-2q^2\frac{f_6
 		 f_{36 }^2}{f_3f_{12}f_{18}},
\]
from which, we arrive at 
\begin{align}
\sum_{n=0}^\infty a_{2^23^{-1}4^{-1}}(3n)q^n&=
\frac{f_6^2}{f_1f_{12}} =\frac{1}{(q,q^2;q^3)_\infty f_{12}}
\cdot \left(\sum_{n=0}^\infty q^{3n(n+1)/2}\right)
 \quad {\rm (by \ \eqref{2-10-1})}
\nonumber\\
&\succcurlyeq \sum_{n=0}^\infty q^n,   \label{3-4}
\\
\sum_{n=0}^\infty a_{2^23^{-1}4^{-1}}(3n+1)q^n&=
0
, \label{3-5}\\
\sum_{n=0}^\infty a_{2^23^{-1}4^{-1}}(3n+2)q^n&=
-2\frac{f_2f_{12}^2}{f_1f_4f_6} =-2\frac{1}{(q;q^2)_\infty
 f_4}\cdot  \left(\sum_{n=0}^\infty q^{3n(n+1)}\right)
 \quad {\rm (by \ \eqref{2-10-1})}
 \nonumber\\
 &\preccurlyeq - \sum_{n=0}^\infty q^n.   \label{3-6}
\end{align}
It follows from \eqref{3-4}--\eqref{3-6}
 that Theorem \ref{Th-2}
  is true when $(t_2,t_3,t_4)=(2,-1,-1)$.

In view of \eqref{1-1}, 
\begin{align}
\sum_{n=0}^\infty a_{2^33^{-1} }(n)q^n=\frac{f_2^3}{f_3}.
\label{3-7}
\end{align}
 Hirschhorn,   Garvan,
and   Borwein \cite{Hirschhorn} proved that 
\begin{align}\label{3-8}
f_1^3= f_3 a(q^3)-3qf_9^3,
\end{align}
where 
\[
a(q)=\sum_{m,n=-\infty}^\infty q^{m^2+mn+n^2}.
\]
Thanks to \eqref{3-7}
 and \eqref{3-8}, 
\begin{align*}
	\sum_{n=0}^\infty a_{2^33^{-1} }(n)q^n=\frac{f_6}{f_3} a(q^6)-3q^2\frac{f_{18}^3}{f_3}, 
\end{align*}
which yields 
\begin{align}
\sum_{n=0}^\infty a_{2^33^{-1} }(3n)q^n
 &=\frac{1}{(q;q^2)_\infty }a(q^2) \succcurlyeq \sum_{n=0}^\infty
  q^n, \label{3-9}\\
\sum_{n=0}^\infty a_{2^33^{-1} }(3n+1)q^n&=0
,\label{3-10}\\
\sum_{n=0}^\infty a_{2^33^{-1} }(3n+2)q^n&=-3\frac{f_2}{f_1}\frac{f_6^3}{f_2}
=-\frac{3}{(q;q^2)_\infty }\sum_{n=0}^\infty c_3(n)q^{2n}\preccurlyeq - \sum_{n=0}^\infty q^n. \label{3-11}
\end{align}
Based on 
\eqref{3-9}--\eqref{3-11},
 we see that Theorem \ref{Th-2} is true when $(t_2,t_3,t_4)=(3,-1,0)$.
  \qed  
 
\begin{theorem}\label{Th-3}
	Define
	\[
	S_3:=\{(3,0,-2,0),(1,-1,-2,1),(2,-1,-2,0)\}
	\]
	If $(t_1,t_2,t_3,t_4)\in S_3$, then for $n\geq 0$,
	\begin{align*} 
		a_{1^{t_1}2^{t_2}3^{t_3}4^{t_4}}(3n) >0,\quad 
		a_{1^{t_1}2^{t_2}3^{t_3}4^{t_4}}(3n+1) <0,\quad a_{1^{t_1}2^{t_2}3^{t_3}4^{t_4}}(3n+2) =0.
	\end{align*}
\end{theorem}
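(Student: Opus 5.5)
The plan is to 3-dissect each of the three eta-quotients, in the spirit of the proof of Theorem \ref{Th-2}. The aim is to write the generating function as $A(q^3)+qB(q^3)$ with no $q^{3n+2}$ terms. The residue class $2$ then vanishes, and it remains to show $A(q)\succcurlyeq\sum_{n\ge0}q^n$ and $B(q)\preccurlyeq-\sum_{n\ge0}q^n$. For the latter two facts I will use the lemma stated before Theorem \ref{Th-1}, together with the observation that $1/f_1\succcurlyeq 1/(1-q)$. In each case the factor $1/f_3^2$ passes through the dissection unchanged, becoming $1/f_1^2$ after $q^3\to q$.

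\emph{Case $(3,0,-2,0)$.} By \eqref{3-8},
\[
\frac{f_1^3}{f_3^2}=\frac{a(q^3)}{f_3}-3q\frac{f_9^3}{f_3^2}.
\]
Hence $\sum a(3n)q^n=a(q)/f_1\succcurlyeq\sum q^n$, since $a(q)$ has nonnegative coefficients and constant term $1$. Also
\[
\sum a(3n+1)q^n=-3\frac{f_3^3}{f_1}\cdot\frac1{f_1}=-3\Big(\sum c_3(n)q^n\Big)\frac1{f_1}\preccurlyeq-\sum q^n
\]
by \eqref{2-11}, and there are no $q^{3n+2}$ terms.

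\emph{Case $(2,-1,-2,0)$.} By \eqref{3-3},
\[
\frac{f_1^2}{f_2f_3^2}=\frac{f_9^2}{f_{18}f_3^2}-2q\frac{f_{18}^2}{f_6f_9f_3}.
\]
Thus
\[
\sum a(3n)q^n=\frac{f_3^2}{f_1^2f_6}=\frac{f_2f_3^2}{f_1f_6}\cdot\frac{1}{f_1f_2}=\Big(\sum q^{n(3n+1)/2}\Big)\frac1{f_1f_2}\succcurlyeq\sum q^n
\]
by \eqref{2-6}. Similarly, by \eqref{2-10-1},
\[
\sum a(3n+1)q^n=-2\frac{f_6^2}{f_3}\cdot\frac1{f_1f_2}=-2\Big(\sum q^{3n(n+1)/2}\Big)\frac1{f_1f_2}\preccurlyeq-\sum q^n,
\]
and the class $2$ vanishes.

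\emph{Case $(1,-1,-2,1)$.} Here $f_1f_4/f_2=\psi(-q)=\sum_{n\ge0}(-1)^{n(n+1)/2}q^{n(n+1)/2}$. Triangular numbers are never $\equiv2\pmod 3$, and those $\equiv1\pmod3$ are exactly $1+9k(k+1)/2$. Comparing signs therefore gives
\[
\psi(-q)=f(-q^3,q^6)-q\,\psi(-q^9),
\]
where $f(a,b)$ is Ramanujan's theta function. So the class $2$ vanishes and
\[
\sum a(3n+1)q^n=-\frac{\psi(-q^3)}{f_1^2}=-\frac{f_3}{f_1}\cdot\frac{f_{12}}{f_6f_1}.
\]
Here $f_3/f_1=1/(q,q^2;q^3)_\infty$, and $f_{12}/(f_6f_1)=1/\prod_{n\not\equiv6\,(12)}(1-q^n)$. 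Both have nonnegative coefficients, and the latter dominates $1/(1-q)$, giving $\preccurlyeq-\sum q^n$.

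The main obstacle is the class $0$, namely $\sum a(3n)q^n=f(-q,q^2)/f_1^2$. I would convert $f(-q,q^2)$ into an eta-quotient via the Jacobi triple product, $f(-q,q^2)=(q;-q^3)_\infty(-q^2;-q^3)_\infty(-q^3;-q^3)_\infty$, and rewrite this as a product $\prod(1-q^n)^{e_n}$ with $e_n\in\{0,\pm1\}$ depending on $n$ modulo $12$. The hope is that every factor with $e_n=1$ is cancelled by one of the two copies of $1/(1-q^n)$ coming from $1/f_1^2$. This would leave a product of factors $1/(1-q^n)$ that includes $n=1$, hence $\succcurlyeq\sum q^n$. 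If some residue class survives with net exponent $+1$, I would instead split off a positive theta series such as \eqref{2-5} or \eqref{2-6}, as was done in Theorem \ref{Th-1}, and absorb the remainder into partition-type products.
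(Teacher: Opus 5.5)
Your route is essentially the paper's. For $(3,0,-2,0)$ and $(2,-1,-2,0)$ you use the same dissections \eqref{3-8} and \eqref{3-3} and get the same component generating functions. Your factorizations through \eqref{2-6} and \eqref{2-10-1} differ from the paper's pure products only cosmetically. For $(1,-1,-2,1)$, your dissection $\psi(-q)=f(-q^3,q^6)-q\psi(-q^9)$ is exactly the paper's \eqref{3-16}, since $f(-q^3,q^6)=f_3f_{12}f_{18}^5/(f_6^2f_9^2f_{36}^2)$. The paper obtains it by replacing $q$ with $-q$ in \eqref{3-15}; your derivation from triangular numbers modulo $3$ is a self-contained alternative.

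The one step you leave conditional, class $0$ of the third case, does go through, but not for the reason you give. Carrying out the triple product gives $f(-q,q^2)=(q,q^5;q^6)_\infty(-q^2,-q^3,-q^4;q^6)_\infty f_6=f_1f_4f_6^5/(f_2^2f_3^2f_{12}^2)$. In this product the exponent of $1-q^n$ is $1,-1,-1,0,2,1$ according as $\gcd(n,12)=1,2,3,4,6,12$. So $e_n=2$, not $e_n\in\{0,\pm1\}$, for $n\equiv 6\pmod{12}$. For example, the factor $1+q^3$ of $(-q^3;-q^3)_\infty$ becomes $(1-q^6)/(1-q^3)$ and joins the factor $1-q^6$ already there.

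This is still absorbed, because $1/f_1^2$ supplies two copies of each $1/(1-q^n)$. Thus $\sum a_{1^12^{-1}3^{-2}4^1}(3n)q^n=f_4f_6^5/(f_1f_2^2f_3^2f_{12}^2)$ has exponents $-1,-3,-3,-2,0,-1$ in the same six classes. Hence it is a product of factors $1/(1-q^n)$ including $n=1$, and it dominates $\sum q^n$. The paper instead splits off the theta series $f_6^5/(f_3^2f_{12}^2)=\sum q^{3n^2}$, leaving $1/\bigl(f_1f_2(q^2;q^4)_\infty\bigr)$. You should write this computation out rather than leave it as a hope with a fallback.

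There is also a small slip in class $1$: $f_{12}/(f_6f_1)=1/\bigl(f_1(q^6;q^{12})_\infty\bigr)$, whereas $1/\prod_{n\not\equiv 6\,(12)}(1-q^n)=f_6/(f_1f_{12})$. Nonnegativity and domination of $1/(1-q)$ are unaffected.
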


\noindent{\it Proof.} In view of \eqref{1-1}, 
\begin{align}\label{3-12}
\sum_{n=0}^\infty a_{1^33^{-2} }(n)q^n=\frac{f_1^3 }{ f_3^2}.
\end{align}
Combining  \eqref{3-8} and \eqref{3-12}, we can prove that 
\begin{align*} 
\sum_{n=0}^\infty a_{1^33^{-2} }(3n)q^n&=\frac{a(q) }{ f_1}\succcurlyeq \sum_{n=0}^\infty q^n
,\\
\sum_{n=0}^\infty a_{1^33^{-2} }(3n+1)q^n&=-3\frac{f_3^3 }{ f_1^2}
=\frac{-3}{f_1}\sum_{n=0}^\infty c_3(n)q^n 
 \preccurlyeq - \sum_{n=0}^\infty q^n \\
\sum_{n=0}^\infty a_{1^33^{-2} }(3n+2)q^n&=0. 
\end{align*}
Thus, Theorem \ref{Th-3}
 is true when $(t_1,t_2,t_3,t_4)=(3,0,-2,0)$. 

By \eqref{1-1}, 
\begin{align}\label{3-14}
 \sum_{n=0}^\infty a_{1^12^{-1}3^{-2}4^1 }(n)q^n=\frac{f_1f_4}{f_2f_3^2}.
\end{align}
It follows from \cite[Corollary (ii), p. 49]{Berndt} that 
\begin{align}
	\frac{f_2^2}{f_1}=\frac{f_6f_9^2}{
	 f_3f_{18}}+q\frac{f_{18}^2}{f_9} .\label{3-15}
\end{align}
 Replacing $q$ by $-q$ in \eqref{3-15} yields 
\begin{align}\label{3-16}
 \frac{f_1f_4}{f_2}=\frac{f_3f_{12}f_{18}^5
 }{f_6^2f_9^2f_{36}^2}-q\frac{f_9f_{36}}{f_{18}}.
 \end{align}
Substituting \eqref{3-16} into \eqref{3-14}, we have
\[
 \sum_{n=0}^\infty a_{1^12^{-1}3^{-2}4^1 }(n)q^n=\frac{ f_{12}f_{18}^5
 }{f_3 f_6^2f_9^2f_{36}^2}-q\frac{f_9f_{36}}{f_3^2f_{18}},
\]
from which, we get 
 \begin{align*}
 \sum_{n=0}^\infty a_{1^12^{-1}3^{-2}4^1 }(3n)q^n&=\frac{f_4f_6^5}{f_1f_2^2f_3^2f_{12}^2}
 =\frac{1}{f_1f_2(q^2;q^4)_\infty}
 \left(\sum_{n=-\infty}^\infty q^{n^2}\right)\succcurlyeq 
 \sum_{n=0}^\infty q^n 
,\\
 \sum_{n=0}^\infty a_{1^12^{-1}3^{-2}4^1 }(3n+1)q^n&=-\frac{f_3f_{12}}{f_1^2f_{6}}
 =-\frac{1}{f_1(q,q^2;q^3)_\infty (q^6;q^{12})_\infty }\preccurlyeq 
 -\sum_{n=0}^\infty q^n
,\\
 \sum_{n=0}^\infty a_{1^12^{-1}3^{-2}4^1 }(3n+2)q^n&=0.
 \end{align*}
 Thus, Theorem \ref{Th-3}
 is true when $(t_1,t_2,t_3,t_4)=(1,-1,-2,1)$. 
 
In light of \eqref{1-1},
 \begin{align*}
\sum_{n=0}^\infty a_{1^22^{-1}3^{-2} }(n)q^n=\frac{f_1^2 }{f_2f_3^2}.
\end{align*}
Substituting  \eqref{3-3} into the above identity,   we can prove that 
 \begin{align}
\sum_{n=0}^\infty a_{1^22^{-1}3^{-2} }(3n)q^n&=\frac{f_3^2 }{f_1^2 f_6}
=\frac{1}{(q,q^2;q^3)_\infty^2 f_6} \succcurlyeq  \sum_{n=0}^\infty q^n ,\label{3-16-1}\\
\sum_{n=0}^\infty a_{1^22^{-1}3^{-2} }(3n+1)q^n&=-2\frac{f_6^2}{f_1f_2f_3}=-2\frac{1}{ f_1(q^2,q^3,q^4;q^6)_\infty }\preccurlyeq - \sum_{n=0}^\infty q^n
, \label{3-17}
\\
\sum_{n=0}^\infty a_{1^22^{-1}3^{-2} }(3n+2)q^n&=0.
\label{3-18}
 \end{align}
It  follows from \eqref{3-16-1}--\eqref{3-18}
 that Theorem \ref{Th-3} is true when $(t_1,t_2,t_3,t_4)=(2,-1,-2,0)$. 
  \qed

\begin{theorem}\label{Th-4}
	For $n\geq 0$, 
	\[
	a_{1^12^13^{-2} }(3n)>0, \quad a_{1^12^13^{-2} }(3n+1)<0,\quad 
	a_{1^12^13^{-2} }(3n+2)<0. 
	\]
\end{theorem}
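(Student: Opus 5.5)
The plan is to follow the proofs of Theorems~\ref{Th-2} and \ref{Th-3}: write down an explicit 3-dissection of the numerator $f_1f_2$ and divide by $f_3^2$. I will use the classical 3-dissection
\begin{align*}
f_1f_2=\frac{f_6f_9^4}{f_3f_{18}^2}-qf_9f_{18}-2q^2\frac{f_3f_{18}^4}{f_6f_9^2}.
\end{align*}
It can be derived from \eqref{3-3} and \eqref{3-15}, or obtained directly from the quintuple product identity, and I would verify it numerically first. Dividing by $f_3^2$, extracting the terms $q^{3n+r}$ and replacing $q^3$ by $q$, I expect
\begin{align*}
\sum_{n=0}^\infty a_{1^12^13^{-2}}(3n)q^n&=\frac{f_2f_3^4}{f_1^3f_6^2},\qquad
\sum_{n=0}^\infty a_{1^12^13^{-2}}(3n+1)q^n=-\frac{f_3f_6}{f_1^2},\\
\sum_{n=0}^\infty a_{1^12^13^{-2}}(3n+2)q^n&=-2\frac{f_6^4}{f_1f_2f_3^2}.
\end{align*}

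The two negative classes are handled as in the earlier sections. For the class $3n+2$, I would write
\[
\frac{f_6^4}{f_1f_2f_3^2}=\frac{1}{f_1f_2}\left(\sum_{n=0}^\infty q^{3n(n+1)/2}\right)^2
\]
using \eqref{2-10-1}. Since $1/f_1\succcurlyeq\sum_{n\ge0} q^n$ and the other factors have nonnegative coefficients and constant term $1$, this gives $\preccurlyeq-\sum_{n\ge0} q^n$. For the class $3n+1$, I would write
\[
\frac{f_3f_6}{f_1^2}=\frac{f_6^2}{f_3}\cdot\frac{f_3^2}{f_1^2f_6}
=\left(\sum_{n=0}^\infty q^{3n(n+1)/2}\right)\frac{1}{(q,q^2;q^3)_\infty^2f_6}.
\]
Here the last factor is exactly the one already bounded in \eqref{3-16-1}, so this class is $\preccurlyeq-\sum_{n\ge0} q^n$ as well.

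The main obstacle is the class $3n$, since $f_2f_3^4/(f_1^3f_6^2)$ does not factor obviously into positive theta series. My first attempt would be to write it as
\[
\frac{1}{(q;q^2)_\infty}\cdot\frac{1}{f_1}\cdot\left(\frac{f_3^2}{f_1f_6}\right)^2,
\qquad
\frac{f_3^2}{f_1f_6}=\frac{(q^3;q^6)_\infty}{(q,q^2;q^3)_\infty},
\]
and show that the whole expression has nonnegative coefficients. The idea is to cancel each numerator factor $1-q^{3(2k+1)}$ against a denominator factor $1-q^{2k+1}$, using
\[
\frac{1-q^{3m}}{1-q^m}=1+q^m+q^{2m}.
\]
When $3\nmid 2k+1$, the factor $1-q^{2k+1}$ is available in $(q,q^2;q^3)_\infty$. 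When $2k+1=3^am$ with $a\ge1$ and $3\nmid m$, one must instead telescope through $1-q^m$, and there are repeated demands on the same $m$. For these I would spend the spare factors supplied by the second copy of $f_3^2/(f_1f_6)$, by $1/(q;q^2)_\infty$, and by $1/f_1$, all of which contain $1/(1-q^m)$ for odd $m$.

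If this bookkeeping fails, the fallback is to express $f_2f_3^4/(f_1^3f_6^2)$ through a cubic theta function, for instance $a(q)$ from \eqref{3-8} or the $3$-core generating function \eqref{2-11}, multiplied by a partition-type product. Once positivity is established, the bound $\succcurlyeq\sum_{n\ge0} q^n$ follows from the factor $1/f_1$.
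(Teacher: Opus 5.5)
Your dissection is the one the paper uses (the paper quotes Hirschhorn). Your remark that it follows by multiplying \eqref{3-3} and \eqref{3-15}, since $f_1f_2=\frac{f_1^2}{f_2}\cdot\frac{f_2^2}{f_1}$, is correct. The three generating functions you expect are right, and your treatment of the classes $3n+1$ and $3n+2$ is fine.

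The gap is in the class $3n$. Your factorization is off by one factor:
\[
\frac{1}{(q;q^2)_\infty}\cdot\frac{1}{f_1}\cdot\Bigl(\frac{f_3^2}{f_1f_6}\Bigr)^2=\frac{f_2f_3^4}{f_1^4f_6^2}\neq\frac{f_2f_3^4}{f_1^3f_6^2}.
\]
The spurious $1/f_1$ is exactly what makes your cancellation bookkeeping close. For the correct function
\[
\frac{f_2f_3^4}{f_1^3f_6^2}=\frac{(q^3;q^6)_\infty^2}{(q,q^2;q^3)_\infty^2\,(q;q^2)_\infty},
\]
look at the powers of $3$. The numerator contains $(1-q^{3^a})^2$ for every $a\ge1$. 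The only denominator factors $1-q^d$ with $d\mid 3^a$ are three copies of $1-q$ and a single copy of each $1-q^{3^b}$, $b\ge1$. After cancelling common factors you are left with $\prod_{a\ge1}(1-q^{3^a})$ over $(1-q)^3$. Three factors $1-q$ cannot serve infinitely many numerator factors, so no pairing of the type $(1-q^{3m})/(1-q^m)$, or telescoping down to $1-q^m$, can close. Positivity here is not a factor-by-factor phenomenon.

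The missing ingredient is the one you list only as a fallback: the $3$-core series \eqref{2-11}, whose coefficients are nonnegative for combinatorial reasons. The paper writes
\[
\frac{f_2f_3^4}{f_1^3f_6^2}=\frac{1}{(q;q^2)_\infty(q,q^2;q^3)_\infty f_6^2}\sum_{n=0}^\infty c_3(n)q^n .
\]
Equivalently, your own factorization works once the extra $1/f_1$ is deleted and you note, as in \eqref{4-4}, that $\frac{f_3^2}{f_1f_6}=\frac{1}{f_3f_6}\sum_{n\ge0}c_3(n)q^n$. The bound $\succcurlyeq\sum_{n\ge0}q^n$ then comes from $1/(q;q^2)_\infty\succcurlyeq 1/(1-q)$, since there is no longer a bare factor $1/f_1$.
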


\noindent{\it Proof.} By \eqref{1-1}, 
\begin{align}\label{3-19}
\sum_{n=0}^\infty a_{1^12^13^{-2} }(n)q^n=\frac{f_1f_2 }{ f_3^2}.
\end{align}
  Hirschhorn \cite[(14.3.1), p. 132]{Hirschhorn-1} proved that 
\begin{align}\label{3-20}
f_1f_2=\frac{f_6f_9^4}{f_3f_{18}^2}
-qf_9f_{18}-2q^2\frac{f_3f_{18}^4}{f_6f_9^2}.
\end{align}
Substituting \eqref{3-20}
 into \eqref{3-19},  we can prove  that 
\begin{align*}
\sum_{n=0}^\infty a_{1^12^13^{-2} }(3n)q^n&=\frac{f_2f_3^4 }{ f_1^3f_6^2}=\frac{1}{(q;q^2)_\infty
 (q,q^2;q^3)_\infty f_6^2}\sum_{n=0}^\infty c_3(n)q^n \succcurlyeq \sum_{n=0}^\infty q^n
,\\
\sum_{n=0}^\infty a_{1^12^13^{-2} }(3n+1)q^n&=-\frac{f_3f_6}{ f_1^2}
=-\frac{1}{(q,q^2;q^3)_\infty (q,q^2,q^3,q^4,q^5;q^6)_\infty }\preccurlyeq -\sum_{n=0}^\infty q^n,\\
\sum_{n=0}^\infty a_{1^12^13^{-2} }(3n+2)q^n&=-2\frac{f_6^4}{ f_1f_2f_3^2}= \frac{-2}{f_1f_2}\left(\sum_{n=0}^\infty c_2(n)q^{3n}\right)^2\preccurlyeq - \sum_{n=0}^\infty q^n.
\end{align*}
Hence, Theorem \ref{Th-4} is true. \qed 

	\section{Sign changes with period 4}
	
In this section, we  confirm several  conjectures 
	of  Bringmann, Han, Heim and Kane
	\cite{Bringmann-1}	on sign changes with period 4. 
	
\begin{theorem}\label{Th-4-1}
Define 
\[
S_4:=\{(-1,1,3,-3), (-3,8,1,-7)\}.
\]
	If $(t_1,t_2,t_3,t_4)\in S_4$, then for $n\geq 0$,
\begin{align*}
	a_{1^{t_1}2^{t_2}3^{t_3}4^{t_4}}(2n) >0,\quad 
	a_{1^{t_1}2^{t_2}3^{t_3}4^{t_4}}(4n+1) >0,\quad a_{1^{t_1}2^{t_2}3^{t_3}4^{t_4}}(4n+3) <0.
\end{align*}
\end{theorem}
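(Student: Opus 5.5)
The plan is to 2-dissect each eta-quotient and then 2-dissect the odd part once more, using the same device as in the period-2 case: the substitution $q\to -q$ together with $(-q;-q)_\infty=f_2^3/(f_1f_4)$. Concretely, I will write
\[
\sum_{n\ge0}a(n)q^n=E(q^2)+qO(q^2).
\]
Then I aim to show three things:
\begin{itemize}
\item $E(q)\succcurlyeq\sum_{n\ge0} q^n$, which gives $a(2n)>0$;
\item $O(-q)\succcurlyeq\sum_{n\ge0}q^n$, which says that $O$ has strictly alternating signs;
\item in particular the coefficients of $O$ at even $n$ are positive (these are $a(4n+1)$) and at odd $n$ are negative (these are $a(4n+3)$).
\end{itemize}

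For $(t_1,t_2,t_3,t_4)=(-1,1,3,-3)$ the quotient is $f_2f_3^3/(f_1f_4^3)$. I will use the classical 2-dissection
\[
\frac{f_3^3}{f_1}=\frac{f_4^3f_6^2}{f_2^2f_{12}}+q\frac{f_{12}^3}{f_4},
\]
which gives $E(q)=f_3^2/(f_1f_6)$ and $O(q)=f_1f_6^3/f_2^4$. For the even part I write $E(q)=\frac{f_2f_3^2}{f_1f_6}\cdot\frac1{f_2}$ and invoke \eqref{2-6}:
\[
E(q)\succcurlyeq(1+q+q^2)\cdot\frac{1}{1-q^2}\succcurlyeq\sum_{n\ge0} q^n.
\]
For the odd part, replacing $q$ by $-q$ gives $O(-q)=f_6^3/(f_1f_2f_4)$. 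This is a product of reciprocals of products of the form $(q^k;q^k)_\infty$ times $f_6^3$. I will argue it is $\succcurlyeq\sum_{n\ge0}q^n$, either by rewriting $f_6^3/f_2$ as $\sum_{n\ge0} c_3(n)q^{2n}$ via \eqref{2-11}, or directly from the factor $1/f_1\succcurlyeq 1/(1-q)$ with the rest having nonnegative coefficients and constant term $1$.

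For $(-3,8,1,-7)$ the quotient is $f_2^8f_3/(f_1^3f_4^7)$. I will use the known 2-dissection
\[
\frac{f_3}{f_1^3}=\frac{f_4^6f_6^3}{f_2^9f_{12}^2}+3q\frac{f_4^2f_6f_{12}^2}{f_2^7},
\]
which gives
\[
E(q)=\frac{f_3^3}{f_1f_2f_6^2},\qquad O(q)=3\frac{f_1f_3f_6^2}{f_2^5}.
\]
For the even part, $E(q)=\bigl(\sum_{n\ge0} c_3(n)q^n\bigr)\cdot\frac1{f_2f_6^2}$ by \eqref{2-11}. Since $c_3(0)=c_3(1)=1$, this gives $E\succcurlyeq(1+q)/(1-q^2)=\sum_{n\ge0} q^n$. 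For the odd part, substituting $q\to-q$ and using $(-q;-q)_\infty=f_2^3/(f_1f_4)$, and likewise $(-q^3;-q^3)_\infty=f_6^3/(f_3f_{12})$, yields
\[
O(-q)=3\frac{f_6^5}{f_1f_2^2f_3f_4f_{12}}.
\]
After grouping $f_6^5/(f_3f_{12}\cdots)$ into theta-type pieces, or simply extracting $1/f_1$, this is a positive combination $\succcurlyeq\sum_{n\ge0} q^n$.

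The main obstacle is confirming the two 2-dissection identities (for $f_3^3/f_1$ and $f_3/f_1^3$), which are standard (Hirschhorn's book) but must be quoted precisely. A second point needs care: in the odd parts, the positive factor remaining after pulling out $1/f_1$ is a quotient like $f_6^3/(f_2f_4)$, which must genuinely have nonnegative coefficients. I plan to justify this via $f_6^3/f_2=\sum_{n\ge0} c_3(n)q^{2n}$, and similarly $f_6^5/(f_2^2f_3f_{12})=\frac{f_6^2}{f_3}\cdot\frac{f_6^3}{f_2}\cdot\frac{1}{f_2f_{12}}$, using \eqref{2-10-1} and \eqref{2-11}.
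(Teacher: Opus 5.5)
Your proposal is correct and follows the paper's proof essentially step for step: the same 2-dissections of $f_3^3/f_1$ and $f_3/f_1^3$, the same even and odd generating functions, and the same substitution $q\to -q$ producing $f_6^3/(f_1f_2f_4)$ and $3f_6^5/(f_1f_2^2f_3f_4f_{12})$, with positivity read off from $c_3$ and theta-type factors. The only cosmetic difference is in the even part of $f_2f_3^3/(f_1f_4^3)$: you use \eqref{2-6} times $1/f_2$, whereas the paper writes it as $\frac{1}{f_3f_6}\sum_{n\ge0}c_3(n)q^n$, and both give $\succcurlyeq\sum_{n\ge0}q^n$.
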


\noindent{\it Proof.}
 In view of \eqref{1-1}, 
\begin{align}
\sum_{n=0}^\infty a_{1^{-1}2^13^{3}4^{-3} }(n)q^n=\frac{f_2f_3^3 }{ f_1f_4^3}. \label{4-2}
\end{align}
Hirschhorn \cite[(22.7.5), p. 196]{Hirschhorn-1} proved that 
\begin{align}
\frac{f_3^3}{f_1}=\frac{f_4^3f_6^2}{
 f_2^2f_{12}}+q\frac{f_{12}^3}{f_4}. \label{4-3}
\end{align}
Using \eqref{4-2} and \eqref{4-3}, we can prove that 
\begin{align} \label{4-4}
\sum_{n=0}^\infty a_{1^{-1}2^13^{3}4^{-3} }(2n)q^n=
\frac{f_3^2 }{ f_1f_6}=  \frac{1}{f_3f_6}
 \sum_{n=0}^\infty c_3(n)q^n \succcurlyeq \sum_{n=0}^\infty q^n 
\end{align}
and 
\begin{align}
\sum_{n=0}^\infty a_{1^{-1}2^13^{3}4^{-3} }(2n+1)q^n=
\frac{f_1f_6^3 }{ f_2^4}. \label{4-5}
\end{align}
Replacing $q$ by $-q$ in \eqref{4-5}, we arrive at 
\begin{align}
\sum_{n=0}^\infty (-1)^n a_{1^{-1}2^13^{3}4^{-3} }(2n+1)q^n=
\frac{f_6^3 }{ f_1f_2f_4}
 =\frac{1}{f_1f_4}\sum_{n=0}^\infty c_3(n)q^{2n}\succcurlyeq \sum_{n=0}^\infty q^n. 
\label{4-6}
\end{align}
It follows from 
 \eqref{4-4} and \eqref{4-6} that Theorem 
 \ref{Th-4-1} is true when $(t_1,t_2,t_3,t_4)=(-1,1,3,-3)$.

In light of  \eqref{1-1}, 
\begin{align}\label{4-6-1}
\sum_{n=0}^\infty a_{1^{-3}2^83^14^{-7} }(n)q^n=\frac{f_2^8f_3 }{ f_1^3f_4^7}.
\end{align}
Baruah  and  Ojah \cite[(3.49) and (3.50)]{Baruah} proved that 
\begin{align}
\frac{f_3}{f_1^3}=\frac{f_4^6f_6^3}{f_2^9f_{12}^2}+3q\frac{f_4^2f_6f_{12}^2}{f_2^7}.\label{4-7}
\end{align}
Substituting  \eqref{4-7} into \eqref{4-6-1}, 
 we can prove that 
\begin{align}
\sum_{n=0}^\infty a_{1^{-3}2^83^14^{-7} }(2n)q^n=\frac{f_3^3 }{ f_1f_2f_6^2} =\frac{1}{f_2f_6^2}\sum_{n=0}^\infty c_3(n)q^n \succcurlyeq \sum_{n=0}^\infty q^n \label{4-8}
\end{align}
and 
\begin{align}
\sum_{n=0}^\infty a_{1^{-3}2^83^14^{-7} }(2n+1)q^n=3
 \frac{f_1f_3f_6^2 }{ f_2^5}.\label{4-9}
\end{align}
Replacing $q$ by $-q$ in \eqref{4-9}, we arrive at 
\begin{align}\label{4-10}
\sum_{n=0}^\infty (-1)^n  a_{1^{-3}2^83^14^{-7} }(2n+1)q^n=3
\frac{ f_6^5 }{ f_1f_2^2 f_3f_4 f_{12}}=\frac{3}{ f_1  f_3f_4 f_6f_{12}}\left(\sum_{n=0}^\infty c_3(n)q^{2n}\right)^2\succcurlyeq   
\sum_{n=0}^\infty q^n.
\end{align}
It  follows from  \eqref{4-8}  and \eqref{4-10}
 that Theorem \ref{Th-4-1} is true when $(t_1,t_2,t_3,t_4)=(-3,8,1,-7)$.
    \qed

\begin{theorem}\label{Th-4-2}
	Define 
	\[
	S_5:=\{(-1,3,1,-3), (-1,4,1,-3),(-1,4,0,-4)\}.
	\]
	If $(t_1,t_2,t_3,t_4)\in S_5$, then for $n\geq 0$,
	\begin{align*}
		a_{1^{t_1}2^{t_2}3^{t_3}4^{t_4}}(4n)& >0,\quad 
		a_{1^{t_1}2^{t_2}3^{t_3}4^{t_4}}(4n+1) >0,\\ a_{1^{t_1}2^{t_2}3^{t_3}4^{t_4}}(4n+2)& <0,\quad 
		a_{1^{t_1}2^{t_2}3^{t_3}4^{t_4}}(4n+3) <0.
	\end{align*}
\end{theorem}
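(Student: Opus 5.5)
The plan is to write down explicit generating functions for the four residue classes modulo $4$ and show that each equals $\pm$ (a product of theta series with nonnegative coefficients and constant term $1$) times a reciprocal of Pochhammer products. The fact recorded at the beginning of Section 2 then gives $\succcurlyeq \sum_{n\ge0}q^n$ or $\preccurlyeq -\sum_{n\ge0}q^n$. I would do this in two successive $2$-dissections, as in the proof of Theorem \ref{Th-4-1}.

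For the first $2$-dissection, the cases $(-1,3,1,-3)$ and $(-1,4,1,-3)$ contain the factor $f_3/f_1$. For these I would use the classical $2$-dissection
\[
\frac{f_3}{f_1}=\frac{f_4f_6f_{16}f_{24}^2}{f_2^2f_8f_{12}f_{48}}+q\frac{f_6f_8^2f_{48}}{f_2^2f_{16}f_{24}}.
\]
Alternatively, I would use \eqref{4-3} after writing $f_3/f_1=(f_3^3/f_1)/f_3^2$, if a dissection of $1/f_3^2$ is more convenient. The case $(-1,4,0,-4)$ contains only $1/f_1$, and for it I would use
\[
\frac1{f_1}=\frac{f_8^5}{f_2^5f_{16}^2}+q\frac{f_4^2f_{16}^2}{f_2^5f_8}.
\]
Multiplying by the remaining factor $f_2^{t_2}f_4^{t_4}$, which is a function of $q^2$, and extracting even and odd powers gives closed forms for $\sum_n a(2n)q^n$ and $\sum_n a(2n+1)q^n$ as eta-quotients.

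The sign pattern \texttt{++--} says exactly that $a(2n)$ and $a(2n+1)$ both have sign $(-1)^n$. So the second step is to replace $q$ by $-q$ in each of the two half-series and use $(-q;-q)_\infty=f_2^3/(f_1f_4)$ to rewrite them. The goal is to obtain expressions of the form
\[
\Big(\textstyle\sum q^{n^2}\Big)^{\alpha}\Big(\sum q^{n(n+1)/2}\Big)^{\beta}\Big(\sum c_3(n)q^{kn}\Big)^{\gamma}\cdot\frac{1}{\prod(\cdots)},
\]
using \eqref{2-10}, \eqref{2-10-1}, \eqref{2-11}, \eqref{2-5} and \eqref{2-6}. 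I then check that the theta part dominates a short polynomial $P(q)$ such that $P(q)$ times one factor $1/(1-q^M)$ already dominates $1+q+\cdots+q^{M-1}$. If the half-series after $q\to -q$ still carries an odd power of $f_1$ in the numerator, I would not stop there. Instead I would $2$-dissect it once more, using \eqref{4-7}, \eqref{4-3} or the dissection of $f_1^2$ and $f_1f_3$, and handle the residues $4n,4n+2$ (respectively $4n+1,4n+3$) separately. I would aim for formulas like $\sum a(4n)q^n=$ positive product and $\sum a(4n+2)q^n=-\,$positive product.

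The main obstacle is this last point: after the first dissection the even or odd part need not become manifestly signed under $q\to-q$. For $(-1,4,0,-4)$, for instance, the even part is $f_4^5/(f_1f_2^4f_8^2)$. Under $q\to-q$ this turns into something with $f_1$ upstairs, and $f_1$ is not coefficientwise positive. Here I would first try a direct second dissection of $1/f_1$ inside that expression, which amounts to using the $4$-dissection of $1/f_1$. This should express each of $a(4n),\dots,a(4n+3)$ as $\pm$ an eta-quotient with only $f_1$ (or $f_1f_2$) in the denominator. Such a quotient can then be split into a $c_t$ or theta factor times $1/f_1$, and the fact from Section 2 applies with $M=1$ or $2$. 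The two cases with $f_3$ are treated the same way, with the $f_3/f_1$ dissection iterated.
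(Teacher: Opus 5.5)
\textbf{Cases $(-1,3,1,-3)$ and $(-1,4,1,-3)$.} Here your plan is the paper's. You dissect with \eqref{4-12} and then replace $q\to-q$ in each half, which gives \eqref{4-15} and \eqref{4-16}. A short check of the product exponents shows these are already coefficientwise positive, so no second dissection is needed.

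\textbf{Case $(-1,4,0,-4)$: the identity you use is false.} You write
\[
\frac1{f_1}=\frac{f_8^5}{f_2^5f_{16}^2}+q\frac{f_4^2f_{16}^2}{f_2^5f_8}.
\]
This is \eqref{m-1}, the dissection of $1/f_1^2$, with the factor $2$ dropped. The coefficient of $q^2$ is already $5$ on the right against $p(2)=2$ on the left. The parts you derive from it are wrong:
\begin{itemize}
\item your even part $f_4^5/(f_1f_2^4f_8^2)$ and the matching odd part $f_8^2/(f_1f_2^2f_4)$ both have nonnegative coefficients;
\item but $f_2^4/(f_1f_4^4)=1+q-2q^2-q^3+3q^4+\cdots$, so $\sum a(2n)q^n=1-2q+3q^2+\cdots$ and $\sum a(2n+1)q^n=1-q+\cdots$.
\end{itemize}
Your formulas would therefore contradict the theorem itself. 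The ``main obstacle'' you describe, an $f_1$ in the numerator after $q\to-q$, is an artifact of the misquoted identity.

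Your proposed remedy also cannot be carried out with eta-quotients. The even part of $1/f_1$ is $\sum p(2n)q^{2n}$, and $\sum p(2n)q^n=\frac{f_2^2}{f_1^3}S(-q)$. Here $S(-q)$ is not an eta-quotient: its exponent at $q^j$ differs between odd $j\equiv\pm1$ and $j\equiv\pm3\pmod 8$. So $1/f_1$ has no 2-dissection into eta-quotients, and your second step has no eta-quotient tool to work with.

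\textbf{What is missing.} You need the Xia--Yao dissection \eqref{7-22},
\[
\frac1{f_1}=\frac{f_4^2}{f_2^3}\bigl(S(-q^2)+qT(-q^2)\bigr),\qquad S(q)=\frac{1}{(q,q^4,q^7;q^8)_\infty},\quad T(q)=\frac{1}{(q^3,q^4,q^5;q^8)_\infty}.
\]
It gives
\[
\sum_{n\ge0} a(2n)q^n=\frac{f_1}{f_2^2}S(-q),\qquad \sum_{n\ge0} a(2n+1)q^n=\frac{f_1}{f_2^2}T(-q).
\]
After $q\to-q$ these become $\frac{S(q)}{(q;q^2)_\infty f_4}$ and $\frac{T(q)}{(q;q^2)_\infty f_4}$. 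Both are manifestly $\succcurlyeq\sum_{n\ge0}q^n$, so here too a single dissection suffices.
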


\noindent{\it Proof.}
In view of \eqref{1-1}, 
\begin{align}\label{4-11}
\sum_{n=0}^\infty a_{1^{-1}2^{t_2}3^14^{-3} }(n)q^n=\frac{f_2^{t_2}f_3}{ f_1f_4^3}.
\end{align}
Xia and Yao \cite[Lemma 2.6]{Xia-Yao} proved that 
\begin{align}
\frac{f_3}{f_1 }=\frac{f_4f_6f_{16}f_{24}^2}{
f_2^2f_8f_{12}f_{48}}+q\frac{ f_6f_8^2f_{48}}{
 f_2^2f_{16}f_{24}}.\label{4-12}
\end{align}
Substituting \eqref{4-12} into \eqref{4-11} yields 
\begin{align}\label{4-13}
\sum_{n=0}^\infty a_{1^{-1}2^{t_2}3^14^{-3} }(2n)q^n=\frac{f_1^{t_2-2}f_3f_8f_{12}^2}{ f_2^2f_4f_6f_{24}} 
\end{align}
and 
\begin{align}\label{4-14}
	\sum_{n=0}^\infty a_{1^{-1}2^{t_2}3^14^{-3} }(2n+1)q^n=\frac{f_1^{t_2-2}f_3f_4^2f_{24}}{ f_2^3f_8f_{12}}.
\end{align}
Replacing $q$ by $-q$ in \eqref{4-13}, we find that if $t_2\in\{3,4\}$, 
\begin{align}\label{4-15}
\sum_{n=0}^\infty (-1)^n a_{1^{-1}2^{t_2}3^14^{-3} }(2n)q^n=\frac{f_2^{3t_2-8}f_6^2f_8f_{12}}{ f_1^{t_2-2}f_3f_4^{t_2-1}f_{24}} \succcurlyeq \sum_{n=0}^\infty q^n .
\end{align}
Replacing $q$ by $-q$ in \eqref{4-14}, we deduce  that if $t_2\in\{3,4\}$, 
\begin{align}\label{4-16}
\sum_{n=0}^\infty (-1)^n a_{1^{-1}2^{t_2}3^14^{-3} }(2n+1)q^n 
= \frac{f_2^{3t_2-9}f_6^3f_{24}}{f_1^{t_2-2}f_3f_4^{t_2-4}f_8f_{12}^2}  \succcurlyeq \sum_{n=0}^\infty q^n .
\end{align}
By 
 \eqref{4-15} and \eqref{4-16}, we see that 
  Theorem \ref{Th-4-2}
   holds when $(t_1,t_2,t_3,t_4)\in\{(-1,3,1,-3), (-1,4,1,-3)\}$.

Thanks to \eqref{1-1}, 
\begin{align}
\sum_{n=0}^\infty   a_{1^{-1}2^{4}4^{-4} }(n)q^n 
=\frac{f_2^4}{f_1f_4^4}.\label{4-17}
\end{align}
	Xia and Yao \cite[Lemma 3.2]{Xia-Yao-2012} proved that 
\begin{align}\label{7-22}
	\frac{1}{f_1}=\frac{f_4^2}{f_2^3}(S(-q^2)+qT(-q^2))
\end{align}
and 
\begin{align}
	f_1=f_4(S(-q^2)-qT(-q^2)), \label{7-23}
\end{align}
where
\[
S(q):=\frac{1}{(q,q^4,q^7;q^8)_\infty},
 \qquad T(q):=\frac{1}{(q^3,q^4,q^5;q^8)_\infty}.
\]
Substituting \eqref{7-22} into \eqref{4-17},
 we obtain 
\begin{align}
	\sum_{n=0}^\infty    a_{1^{-1}2^{4}4^{-4} }(2n)q^n 
	=\frac{f_1}{f_2^2}S(-q)  \label{4-19}
\end{align}
and 
\begin{align}\label{4-20}
	\sum_{n=0}^\infty    a_{1^{-1}2^{4}4^{-4} }(2n+1)q^n 
	=\frac{f_1}{f_2^2}T(-q).
\end{align}
Replacing $q$ by $-q$ in \eqref{4-19} and 
\eqref{4-20}, we arrive at 
\begin{align}
	\sum_{n=0}^\infty (-1)^n   a_{1^{-1}2^{4}4^{-4} }(2n)q^n 
	=\frac{S(q)}{(q;q^2)_\infty f_4} \succcurlyeq  \sum_{n=0}^\infty q^n  \label{4-21}
\end{align}
and 
\begin{align}
	\sum_{n=0}^\infty  (-1)^n  a_{1^{-1}2^{4}4^{-4} }(2n+1)q^n 
	=\frac{T(q)}{(q;q^2)_\infty f_4}\succcurlyeq\sum_{n=0}^\infty q^n .\label{4-22}
\end{align}
Thus, Theorem \ref{Th-4-2} is true 
 when $(t_1,t_2,t_3,t_4)=(-1,4,0,-4)$.  This completes the proof.  \qed

 \begin{theorem} \label{Th-4-3}
 	For $n\geq 0$,
 	 \begin{align}\label{4-23}
 	 	a_{ 1^{-1}2^2 4^{-2} 5^1}(4n)>0,\quad a_{ 1^{-1}2^2 4^{-2} 5^1}(2n+1)>0, \quad a_{ 1^{-1}2^2 4^{-2} 5^1}(4n+2)=0. 
 	 \end{align}
 \end{theorem}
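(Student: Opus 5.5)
The plan is to 2-dissect the generating function $f_2^2f_5/(f_1f_4^2)$, read off the even and odd parts, and show each is coefficientwise positive (or identically zero) using product representations, exactly as in the proofs of Theorems \ref{Th-4-1} and \ref{Th-4-2}.

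By \eqref{1-1},
\[
\sum_{n=0}^\infty a_{1^{-1}2^24^{-2}5^1}(n)q^n=\frac{f_2^2}{f_4^2}\cdot\frac{f_5}{f_1}.
\]
The only nontrivial input is the classical 2-dissection of $f_5/f_1$, as found in Hirschhorn's book \cite{Hirschhorn-1}:
\[
\frac{f_5}{f_1}=\frac{f_8f_{20}^2}{f_2^2f_{40}}+q\frac{f_4^3f_{10}f_{40}}{f_2^3f_8f_{20}} .
\]
If I cannot locate a clean reference, I would verify it by the standard modular-function check, or derive it from the 2-dissection of $1/f_1$ in \eqref{7-22} together with the analogous one for $f_5$. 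This identity is the step I expect to be the main obstacle; everything afterwards is bookkeeping.

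Substituting the dissection and extracting terms gives
\[
\sum_{n=0}^\infty a(2n)q^n=\frac{f_4f_{10}^2}{f_2^2f_{20}},\qquad
\sum_{n=0}^\infty a(2n+1)q^n=\frac{f_2f_5f_{20}}{f_1f_4f_{10}},
\]
where $a$ abbreviates $a_{1^{-1}2^24^{-2}5^1}$.

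\textbf{Even part.} The series for $a(2n)$ is a function of $q^2$, so $a(4n+2)=0$. Moreover,
\[
\sum_{n=0}^\infty a(4n)q^n=\frac{f_2f_5^2}{f_1^2f_{10}}
=\frac{f_5}{f_1}\cdot\frac{f_2f_5}{f_1f_{10}} .
\]
Here
\[
\frac{f_5}{f_1}=\prod_{5\nmid n}\frac{1}{1-q^n}\succcurlyeq\frac{1}{1-q}=\sum_{n=0}^\infty q^n,
\]
and
\[
\frac{f_2f_5}{f_1f_{10}}=\frac{(-q;q)_\infty}{(-q^5;q^5)_\infty}=\prod_{5\nmid n}(1+q^n)
\]
has nonnegative coefficients and constant term $1$. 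Hence the product is $\succcurlyeq\sum_{n\ge0} q^n$, which gives $a(4n)>0$.

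\textbf{Odd part.} I would write
\[
\frac{f_2f_5f_{20}}{f_1f_4f_{10}}=\prod_{5\nmid n}(1+q^n)\cdot\prod_{5\nmid m}\frac{1}{1-q^{4m}} .
\]
Both factors have nonnegative coefficients. The first is $\succcurlyeq(1+q)(1+q^2)(1+q^3)\succcurlyeq 1+q+q^2+q^3$, and the second contains the factor $1/(1-q^4)$. By the fact recorded before Theorem \ref{Th-1} with $M=4$, the whole product is $\succcurlyeq\sum_{n\ge0}q^n$. Thus $a(2n+1)>0$, completing the proof of \eqref{4-23}.
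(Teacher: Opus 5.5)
Your proof is correct. Through the extraction step it coincides with the paper's. The dissection you quote is exactly \eqref{4-25}, which the paper takes from Xia and Yao, so nothing needs to be derived. The paper also obtains the same series $f_2f_5^2/(f_1^2f_{10})$ for $a(4n)$, the same vanishing of $a(4n+2)$, and the same series $f_2f_5f_{20}/(f_1f_4f_{10})$ for $a(2n+1)$.

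The routes diverge on the odd part. The paper applies \eqref{4-25} a second time, to $\frac{f_5}{f_1}\cdot\frac{f_2f_{20}}{f_4f_{10}}$. This gives separate product formulas \eqref{4-29} and \eqref{4-30} for $a(4n+1)$ and $a(4n+3)$, and the paper then asserts each is $\succcurlyeq\sum_{n\ge0}q^n$. You instead handle the whole odd part at once, via $\prod_{5\nmid n}(1+q^n)\prod_{5\nmid m}(1-q^{4m})^{-1}$ and the $M=4$ case of the lemma stated before Theorem~\ref{Th-1}.

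Since the theorem only asserts $a(2n+1)>0$, your version suffices and needs one dissection instead of two. It also makes the positivity fully explicit. The paper's claim for \eqref{4-29} silently uses that the factor $f_{10}/f_{20}=(q^{10};q^{20})_\infty$, which has negative coefficients, is cancelled by part of $1/f_1$. What the paper's extra dissection buys is explicit generating functions for every residue class modulo $4$, which is its stated aim.

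Your even-part factorization $\frac{f_5}{f_1}\prod_{5\nmid n}(1+q^n)$ differs only cosmetically from the paper's $\frac{1}{(q;q^2)_\infty f_5^3f_{10}}\sum c_5(n)q^n$.

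One caution about your fallback of deriving \eqref{4-25} from \eqref{7-22} and the corresponding dissection of $f_5$: it would not be routine. That route produces combinations such as $S(-q^2)S(-q^{10})-q^6T(-q^2)T(-q^{10})$, which need further identities for $S$ and $T$ to collapse into products.
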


\noindent{\it Proof.}
 In view of \eqref{1-1}, 
\begin{align}
\sum_{n=0}^\infty a_{ 1^{-1}2^2 4^{-2} 5^1}(n)q^n=\frac{ f_2^2f_5}{ f_1f_4^2} . \label{4-24}
\end{align}
In \cite[(4.14)]{Xia-Yao-2012}, Xia and Yao proved that 
\begin{align}
\frac{f_5}{f_1}=\frac{f_8f_{20}^2}{f_2^2f_{40}}
+q\frac{f_4^3f_{10}f_{40}}{f_2^3f_8f_{20}}.
 \label{4-25}
\end{align}
Substituting \eqref{4-25} into \eqref{4-24}, we arrive at
\begin{align*}
	\sum_{n=0}^\infty a_{ 1^{-1}2^2 4^{-2} 5^1}(n)q^n=
	\frac{f_8f_{20}^2}{f_4^2f_{40}}+q\frac{f_4f_{10}f_{40}}{f_2f_8f_{20}},
\end{align*}
from which, we get 
\begin{align}
\sum_{n=0}^\infty a_{ 1^{-1}2^2 4^{-2} 5^1}(4n)q^n&=
\frac{f_2 f_{5}^2}{ f_1^2 f_{10}} 
 =\frac{ 1}{(q;q^2)_\infty 
 	f_5^3f_{10} }\sum_{n=0}^\infty c_5(n)q^n  \succcurlyeq \sum_{n=0}^\infty q^n 
, \label{4-26}\\
\sum_{n=0}^\infty a_{ 1^{-1}2^2 4^{-2} 5^1}(4n+2)q^n&=0 ,
\label{4-27}\\
\sum_{n=0}^\infty a_{ 1^{-1}2^2 4^{-2} 5^1}(2n+1)q^n&=\frac{ f_2f_5f_{20}}{ f_1f_4f_{10}} . \label{4-28}
\end{align}
Substituting \eqref{4-25} into \eqref{4-28}, we have 
\[
\sum_{n=0}^\infty a_{ 1^{-1}2^2 4^{-2} 5^1}(2n+1)q^n= 
\frac{ f_8f_{20}^3 }{ f_2f_4f_{10}f_{40}}+q\frac{ f_4^2f_{40} }{ f_2^2f_8} ,
\]
from which, we obtain 
\begin{align}\label{4-29}
\sum_{n=0}^\infty a_{ 1^{-1}2^2 4^{-2} 5^1}(4n+1)q^n=\frac{ f_4f_{10}^3 }{ f_1f_2f_5f_{20}} \succcurlyeq \sum_{n=0}^\infty q^n
\end{align}
and 
\begin{align}\label{4-30}
\sum_{n=0}^\infty a_{ 1^{-1}2^2 4^{-2} 5^1}(4n+3)q^n=\frac{ f_2^2f_{20} }{ f_1^2f_4} \succcurlyeq \sum_{n=0}^\infty q^n. 
\end{align}
Theorem \ref{Th-4-3} follows from \eqref{4-26}, \eqref{4-27},
 \eqref{4-29} and \eqref{4-30}. 
  The proof is complete.  \qed

\begin{theorem}\label{Th-4-4}
	Define 
	\begin{align*}
		S_6:=&\{(1,0,-1,-3), (1,1,-1,-3),(1,2,-1,-3),(1,3,-1,-3),(1,4,-1,-4),
		(2,0,0,-2), (2,1,0,-2),\\
		&\quad  (2,2,0,-2),(2,3,0,-3),(2,4,0,-3),(1,1,0,-3),
		 (1,2,0,-3), (1,3,0,-3),(1,4,0,-3)\}.
	\end{align*}
	If $(t_1,t_2,t_3,t_4)\in S_6$, then for $n\geq 0$,
	\begin{align*}
		a_{1^{t_1}2^{t_2}3^{t_3}4^{t_4}}(4n)& >0,\quad 
		a_{1^{t_1}2^{t_2}3^{t_3}4^{t_4}}(4n+1) <0,\\ a_{1^{t_1}2^{t_2}3^{t_3}4^{t_4}}(4n+2)& <0,\quad 
		a_{1^{t_1}2^{t_2}3^{t_3}4^{t_4}}(4n+3) >0.
	\end{align*}
\end{theorem}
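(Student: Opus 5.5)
Since the sign pattern is $+,-,-,+$ modulo $4$, I first reduce it to a period-$2$ question. The pattern is $(-1)^{n(n-1)/2}$ evaluated at $n\bmod 4$, which suggests splitting into even and odd parts and then replacing $q$ by $-q$ in each part. Concretely, it suffices to show
\[
\sum_{n\ge0}(-1)^n a(2n)q^n\succcurlyeq\sum_{n\ge0}q^n,\qquad \sum_{n\ge0}(-1)^{n}a(2n+1)q^n\preccurlyeq-\sum_{n\ge0}q^n ,
\]
where $a=a_{1^{t_1}2^{t_2}3^{t_3}4^{t_4}}$. Indeed, $a(4n)>0$, $a(4n+2)<0$, $a(4n+1)<0$ and $a(4n+3)>0$ are exactly these two statements.

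For the 2-dissections, the cases split by $t_1$.

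\textbf{$t_1=1$.} Use \eqref{7-23}, $f_1=f_4(S(-q^2)-qT(-q^2))$. When $t_3=0$, the factor $f_2^{t_2}f_4^{t_4}$ is even, so
\[
\sum a(2n)q^n=f_1^{t_2}f_2^{t_4+1}S(-q),\qquad \sum a(2n+1)q^n=-f_1^{t_2}f_2^{t_4+1}T(-q).
\]
When $t_3=-1$, the factor $f_1/f_3$ is needed. I would write $1/f_3$ via \eqref{4-12}, or better use a known 2-dissection of $f_1/f_3$ (Hirschhorn's $f_1/f_3$ or $f_3/f_1$ identity, e.g. \eqref{4-12} with $q\to-q$ combined with $f_1f_3$ and $f_1/f_3$ dissections). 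I would then extract the even and odd parts.

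\textbf{$t_1=2$.} Use $f_1^2=f_2f_8^5/(f_4^2f_{16}^2)-2qf_2f_{16}^2/f_8$, which is \eqref{2-10} with $q\to-q$. This gives explicit even and odd parts, each a single eta-quotient times $S$ or $T$ or a theta function.

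In each case I then replace $q$ by $-q$, using $(-q;-q)_\infty=f_2^3/(f_1f_4)$, while $f_{2k}$ is unchanged and $S(-q)\mapsto S(q)$. The aim is to rewrite each resulting product, as in Theorems~\ref{Th-4-2} and \ref{Th-4-3}, as a product of the following nonnegative pieces:
\begin{itemize}
\item $1/(q;q^2)_\infty$;
\item $S(q)$ and $T(q)$;
\item theta series from \eqref{2-5}, \eqref{2-10} and \eqref{2-10-1};
\item $t$-core generating functions \eqref{2-11};
\item a factor $1/f_k$ or $1/(q^a,q^b;q^c)_\infty$ that contains $1/(1-q)$.
\end{itemize}
The coefficientwise bound $\succcurlyeq\sum q^n$ then follows from the fact quoted at the start of Section~2 (with $M=1$, or with a small $M$ using $F\succcurlyeq 1+\cdots+q^{M-1}$).

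For example, take $(2,0,0,-2)$. The odd part is $-2f_2f_{16}^2/(f_8f_4^2)$ at $q^{2}\mapsto q$, which equals $-2f_1f_8^2/(f_4f_2^2)$. Replacing $q$ by $-q$ gives $-2f_2f_8^2/(f_1f_2^2f_4^2)$, which is $-2$ times $1/f_1$ times $\sum q^{n(n+1)}$-type factors. That is $\preccurlyeq-\sum q^n$ as needed.

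The main obstacle is the exponent bookkeeping once $t_2$ grows to $3$ or $4$. After the substitution $q\to-q$, the powers $f_1^{t_2}$ become $(f_2^3/(f_1f_4))^{t_2}$. This produces numerators $f_2^{3t_2}$ and denominators $f_4^{t_2}$, and these must be absorbed into genuine nonnegative series such as $f_2^5/(f_1^2f_4^2)$, $f_2^2/f_1$ or $S,T$. Leftover positive powers of $f_4$ must not remain. For each of the 14 cases I would first check that the final eta-quotient has this property. Where a bare $f_k$ appears in the numerator, I would try to pair it with a $1/f_{k}$ coming from $S$ or $T$ (using $S(q)T(q)=1/((q;q^2)_\infty(q^4;q^8)_\infty)$). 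If that fails, I would swap in the alternative dissection of $f_1^2$ versus $(f_1)(f_1)$ via \eqref{7-23}.
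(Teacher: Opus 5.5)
Your plan is essentially the paper's proof. It uses the same reduction to $\sum_n(-1)^na(2n)q^n\succcurlyeq\sum_n q^n$ and $\sum_n(-1)^na(2n+1)q^n\preccurlyeq-\sum_n q^n$, and the same 2-dissections: \eqref{7-23} for $t_1=1,t_3=0$, \eqref{4-40} for $t_1=2$, and \eqref{4-32} for $t_3=-1$ (which is exactly \eqref{4-12} under $q\to-q$ multiplied by the even factor $f_2^3f_{12}/(f_4f_6^3)$), followed by $q\to-q$ and a factorization into nonnegative pieces. That factorization does succeed in all 14 cases once your list of pieces also includes $\sum_n q^{n(3n+1)/2}$ from \eqref{2-6} and squares of \eqref{2-5} (these absorb $f_3^2$, as in \eqref{4-37}--\eqref{4-38}), and the dilations $\sum_n q^{2n^2}=f_4^5/(f_2^2f_8^2)$ and $\sum_{n\ge0}q^{2n(n+1)}=f_8^2/f_4$ (these absorb the leftover powers of $f_4$ when $t_1=2$).

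A few small slips, none affecting the argument:
\begin{itemize}
\item The sign pattern is $(-1)^{n(n+1)/2}$, not $(-1)^{n(n-1)/2}$.
\item The $(2,0,0,-2)$ odd part after $q\to-q$ is $-2f_2f_8^2/(f_1f_4^2)$.
\item \eqref{4-40} needs the dissection $\sum_n q^{n^2}=\sum_n q^{4n^2}+2q\sum_{n\ge0}q^{4n(n+1)}$, not just \eqref{2-10}.
\item $S(q)T(q)=1/\bigl((q;q^2)_\infty(q^4;q^8)_\infty^2\bigr)$, with the second factor squared.
\end{itemize}
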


\noindent{\it Proof.}
 By \eqref{1-1}, 
\begin{align}\label{4-31}
\sum_{n=0}^\infty a_{  1^12^{t_2}3^{-1}4^{-3}}(n)q^n=\frac{ f_1f_2^{t_2} }{ f_3f_4^3} . 
\end{align}
Xia and Yao \cite[Lemma 2.6]{Xia-Yao} proved that 
\begin{align}\label{4-32}
\frac{f_1}{f_3}=\frac{f_2f_{16}f_{24}^2
 }{f_6^2f_8f_{48}}-q\frac{f_2f_8^2f_{12}f_{48}}{f_4f_6^2f_{16}f_{24}}.
\end{align}
Substituting \eqref{4-32} into \eqref{4-31}, we get 
\[
\sum_{n=0}^\infty a_{  1^12^{t_2}3^{-1}4^{-3}}(n)q^n=
\frac{ f_2^{t_2+1}f_{16}f_{24}^2
}{ f_4^3f_6^2f_8f_{48}}-q\frac{ f_2^{t_2+1}f_8^2f_{12}f_{48} }{ f_4^4f_6^2f_{16}f_{24}} ,
\]
from which, we obtain 
\begin{align}\label{4-33}
\sum_{n=0}^\infty a_{  1^12^{t_2}3^{-1}4^{-3}}(2n)q^n=\frac{ f_1^{t_2+1}f_8f_{12}^2
  }{ f_2^3f_3^2f_4f_{24}} 
\end{align}
and 
\begin{align}\label{4-34}
\sum_{n=0}^\infty a_{  1^12^{t_2}3^{-1}4^{-3}}(2n+1)q^n=-\frac{ f_1^{t_2+1}f_4^2f_6f_{24} }{ f_2^4f_3^2f_8f_{12}} .
\end{align}
Replacing $q$ by $-q$ in \eqref{4-33}
 and \eqref{4-34}, we see that if $0\leq t_2\leq 3$, 
\begin{align}\label{4-35}
\sum_{n=0}^\infty (-1)^n a_{  1^12^{t_2}3^{-1}4^{-3}}(2n)q^n=\frac{ f_2^{3t_2}f_3^2f_8f_{12}^4
}{ f_1^{t_2+1}f_4^{t_2+2}f_6^6f_{24}} \succcurlyeq
 \sum_{n=0}^\infty q^n  
\end{align}
and 
\begin{align}\label{4-36}
\sum_{n=0}^\infty (-1)^n  a_{  1^12^{t_2}3^{-1}4^{-3}}(2n+1)q^n=-\frac{  f_2^{3t_2-1}f_3^2f_{12}f_{24} }{ f_1^{t_2+1}f_4^{t_2-1}f_6^5f_8}  \preccurlyeq - \sum_{n=0}^\infty q^n.
\end{align}

Using the same method, we can prove that 
\begin{align}\label{4-37}
\sum_{n=0}^\infty (-1)^n a_{  1^12^{4}3^{-1}4^{-4}}(2n)q^n= 
\left(\sum_{n=-\infty }^\infty q^{n^2}\right)^2 \left(\sum_{n=-\infty}^\infty q^{n(3n+1)/2}\right)
\frac{f_8f_{12}^4}{f_4^2f_6^5f_{24}} \succcurlyeq
\sum_{n=0}^\infty q^n
\end{align}
and 
\begin{align}\label{4-38}
\sum_{n=0}^\infty (-1)^n a_{  1^12^{4}3^{-1}4^{-4}}(2n+1)q^n= 
-\left(\sum_{n=-\infty}^\infty q^{3n^2+2n}\right)^2\left(\sum_{n=0}^\infty c_2(n)q^{n}\right)^3 
\frac{f_{24}}{f_4f_6^3f_8f_{12}} \preccurlyeq 
-\sum_{n=0}^\infty q^n. 
\end{align}

  By \eqref{1-1}, 
\begin{align} \label{4-39}
\sum_{n=0}^\infty a_{ 1^22^{t_2}4^{-2} }(n)q^n=\frac{ f_1^2f_2^{t_2}}{  f_4^2} .
\end{align}
It follows from \cite[Lemma 2.2]{Xia-Yao} that 
\begin{align}\label{4-40}
	f_1^2=\frac{f_2f_8^5}{f_4^2f_{16}^2}
	-2q
	\frac{f_2f_{16}^2}{f_8}
\end{align}
and 
\begin{align}\label{m-1}
	\frac{1}{f_1^2}
	=\frac{ f_8^5}{f_2^5f_{16}^2}
	+2q
	\frac{f_4^2 f_{16}^2}{f_2^5 f_8}. 
\end{align}

 Substituting \eqref{4-40} into \eqref{4-39} yields 
\begin{align*}
	\sum_{n=0}^\infty a_{ 1^22^{t_2}4^{-2} }(n)q^n= \frac{f_2^{t_2+1}f_8^5}{f_4^4f_{16}^2}-2q\frac{f_2^{t_2+1}
	f_{16}^2}{f_4^2f_8},
\end{align*}
from which, we get 
  \begin{align}\label{4-41}
  	\sum_{n=0}^\infty a_{ 1^22^{t_2}4^{-2} }(2n)q^n= \frac{f_1^{t_2+1}f_4^5}{f_2^4f_{8}^2}  
  \end{align}
  and 
  \begin{align}\label{4-42}
  	\sum_{n=0}^\infty a_{ 1^22^{t_2}4^{-2} }(2n+1)q^n=  -2 \frac{f_1^{t_2+1}
  		f_{8}^2}{f_2^2f_4}.
  \end{align}
  Replacing $q$ by $-q$ in \eqref{4-41} and \eqref{4-42},
   we see that for $0\leq t_2\leq 2$, 
    \begin{align}
  	\sum_{n=0}^\infty (-1)^n a_{ 1^22^{t_2}4^{-2} }(2n)q^n= \frac{ f_2^{3t_2-1} f_4^{4-t_2} }{f_1^{t_2+1}f_{8}^2}  \succcurlyeq \sum_{n=0}^\infty q^n \label{4-43}
  \end{align}  
and 
    \begin{align}
	\sum_{n=0}^\infty (-1)^n a_{ 1^22^{t_2}4^{-2} }(2n+1)q^n= -2 \frac{ f_2^{3t_2+1} f_8^{2} }{f_1^{t_2+1}f_{4}^{t_2+2}}   \preccurlyeq - \sum_{n=0}^\infty q^n. \label{4-44}
\end{align}

Utilizing \eqref{4-40} 
 and the same method
 as in the proof of  \eqref{4-43} and \eqref{4-44}, we can prove that  if $t_2\in \{3,4\}$, then 
  \begin{align}
 	\sum_{n=0}^\infty (-1)^n a_{ 1^22^{t_2}4^{-3} }(2n)q^n= \frac{ f_2^{3t_2-2}f_4^{4-t_2} }{f_1^{t_2+1} f_{8}^2}  \succcurlyeq \sum_{n=0}^\infty q^n \label{4-45}
 \end{align}  
 and  \begin{align}
 	\sum_{n=0}^\infty (-1)^n a_{ 1^22^{t_2}4^{-3} }(2n+1)q^n=   -2 \frac{ f_2^{3t_2 } f_8^{2} }{f_1^{t_2+1}f_{4}^{t_2+2}} \preccurlyeq -\sum_{n=0}^\infty q^n .\label{4-46}
 \end{align}  
 The generating 
  function of $a_{  1^12^{t_2} 4^{-3}}(n)$ is 
 \begin{align}\label{g-1}
 	\sum_{n=0}^\infty a_{  1^12^{t_2} 4^{-3}}(n)q^n=\frac{ f_1f_2^{t_2} }{  f_4^3} . 
 \end{align}
 Substituting \eqref{7-23}
  into \eqref{g-1} yields 
   \begin{align*}
  	\sum_{n=0}^\infty a_{  1^12^{t_2} 4^{-3}}(n)q^n=\frac{  f_2^{t_2} }{  f_4^2 }(S(-q^2)-qT(-q^2)),  
  \end{align*}
 from which,  we arrive at
  \begin{align}\label{g-2}
 	\sum_{n=0}^\infty a_{  1^12^{t_2} 4^{-3}}(2n)q^n=\frac{  f_1^{t_2} }{  f_2^2}S(-q)  
 \end{align}
 and 
   \begin{align}\label{g-3}
 	\sum_{n=0}^\infty a_{  1^12^{t_2} 4^{-3}}(2n+1)q^n=-\frac{  f_1^{t_2} }{  f_2^2}T(-q)  .
 \end{align}
 Replacing $q$ by $-q$ in \eqref{g-2} and \eqref{g-3}, we deduce that 
  for $1\leq t_2\leq 4$, 
   \begin{align}\label{g-4}
 	\sum_{n=0}^\infty (-1)^n  a_{  1^12^{t_2} 4^{-3}}(2n)q^n=\frac{  f_2^{3t_2-2} }{  f_1^{t_2}f_4^{t_2}}S(q)  \succcurlyeq 
 	\sum_{n=0}^\infty q^n
 \end{align}
 and 
    \begin{align}\label{g-5}
 	\sum_{n=0}^\infty (-1)^n  a_{  1^12^{t_2} 4^{-3}}(2n+1)q^n=-\frac{  f_2^{3t_2-2} }{  f_1^{t_2}f_4^{t_2}}T(q)  \preccurlyeq
 	-\sum_{n=0}^\infty q^n.
 \end{align}
Theorem \ref{Th-4-4} follows from \eqref{4-35}--\eqref{4-38},  \eqref{4-43}-\eqref{4-46},  and \eqref{g-4}--\eqref{g-5}. This completes the proof. \qed

\begin{theorem}\label{Th-4-5}
	For $n\geq 0$, 
	\[
	 a_{ 1^13^14^{-3} }(4n)>0,\quad  a_{ 1^13^14^{-3} }(4n+2)<0,
	 \quad  a_{ 1^13^14^{-3} }(2n+1)<0.
	\]
	\end{theorem}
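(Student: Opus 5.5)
The plan is to compute explicit generating functions for each residue class, as in the proofs of Theorems~\ref{Th-4-2} and \ref{Th-4-4}. I would first $2$-dissect $f_1f_3/f_4^3$, then $2$-dissect the even part once more, and finally show that each resulting eta-quotient is coefficientwise $\succcurlyeq \sum_{n\ge0}q^n$ or $\preccurlyeq -\sum_{n\ge0}q^n$.

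For the first dissection I would use Hirschhorn's classical $2$-dissection
\[
f_1f_3=\frac{f_2f_8^2f_{12}^4}{f_4^2f_6f_{24}^2}-q\frac{f_4^4f_6f_{24}^2}{f_2f_8^2f_{12}^2}.
\]
Dividing by $f_4^3$ and extracting even and odd parts should give
\begin{align*}
\sum_{n=0}^\infty a_{1^13^14^{-3}}(2n)q^n&=\frac{f_1f_4^2f_6^4}{f_2^5f_3f_{12}^2},\\
\sum_{n=0}^\infty a_{1^13^14^{-3}}(2n+1)q^n&=-\frac{f_2f_3f_{12}^2}{f_1f_4^2f_6^2}.
\end{align*}
For the odd part I would rewrite the quotient using \eqref{2-5}:
\[
\frac{f_2f_3f_{12}^2}{f_1f_4^2f_6^2}=\left(\sum_{n=-\infty}^\infty q^{3n^2+2n}\right)\frac{1}{f_2f_4(q^6;q^{12})_\infty}.
\]
The theta series is $\succcurlyeq 1+q$, and the remaining product is $\succcurlyeq 1/(1-q^2)$. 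Hence the whole expression is $\succcurlyeq \sum q^n$, which gives $a_{1^13^14^{-3}}(2n+1)<0$.

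For the even part I would substitute the $2$-dissection \eqref{4-32} of $f_1/f_3$ into $\frac{f_1}{f_3}\cdot\frac{f_4^2f_6^4}{f_2^5f_{12}^2}$. Extracting even and odd parts should yield
\begin{align*}
\sum_{n=0}^\infty a_{1^13^14^{-3}}(4n)q^n&=\frac{f_2^2f_3^2f_8f_{12}^2}{f_1^4f_4f_6^2f_{24}},\\
\sum_{n=0}^\infty a_{1^13^14^{-3}}(4n+2)q^n&=-\frac{f_2f_3^2f_4^2f_{24}}{f_1^4f_6f_8f_{12}}.
\end{align*}
I would double-check these exponents carefully before relying on them.

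The main obstacle is proving positivity for these two quotients. Each has $f_1^4$ in the denominator but numerator factors such as $f_8$, $f_{12}^2$, and $f_4^2$ that are not positive on their own. My first attempt is to peel off a theta function with nonnegative coefficients: \eqref{2-6} gives $f_2f_3^2/(f_1f_6)$, and \eqref{2-5} or \eqref{2-10-1} can be tried as alternatives. I would then pair each remaining numerator factor $f_k$ with one copy of $1/f_1$. Each such quotient $f_k/f_1$ equals $1/\prod_{k\nmid j}(1-q^j)$, so it has nonnegative coefficients and is $\succcurlyeq 1/(1-q)$. Denominator factors left over, such as $f_{12}/f_{24}$ or $f_4^2/f_8$, would be rewritten as $1/(q^{12};q^{24})_\infty$-type products or absorbed into a theta function via \eqref{2-10}.

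If the count of $1/f_1$ factors does not suffice, I would instead replace $q$ by $-q$ and try the signed quotient in the other orientation. Otherwise I would use a different dissection of the even part, for example \eqref{7-22}--\eqref{7-23} with $S$ and $T$. Once each piece is written as a product of nonnegative series, one of which is $\succcurlyeq 1/(1-q)$, the fact stated at the beginning of Section~2 finishes the argument.
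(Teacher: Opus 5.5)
Your proposal is correct, and the four generating functions you derive all check out. The odd part is handled exactly as in the paper: your quotient $-\frac{f_2f_3f_{12}^2}{f_1f_4^2f_6^2}$ is the paper's $-\frac{f_{12}}{f_2f_4f_6}\sum q^{3n^2+2n}$.

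The difference is in the even part. The paper does not dissect a second time. It replaces $q$ by $-q$ in $\sum a(2n)q^n=\frac{f_1f_4^2f_6^4}{f_2^5f_3f_{12}^2}$ and obtains
\[
\sum_{n=0}^\infty(-1)^na_{1^13^14^{-3}}(2n)q^n=\frac{f_3f_4f_6}{f_1f_2^2f_{12}}=\frac{f_3}{f_1}\cdot\frac{f_4}{f_2}\cdot\frac{f_6}{f_2f_{12}}\succcurlyeq\sum_{n=0}^\infty q^n,
\]
which settles $a(4n)>0$ and $a(4n+2)<0$ in one step. So the $q\to-q$ fallback you mention is precisely the paper's proof, provided it is applied before your second dissection rather than after. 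Your route through \eqref{4-32} costs an extra dissection identity and two positivity checks. In exchange, it gives separate explicit generating functions for the classes $4n$ and $4n+2$.

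The positivity checks you left open do close with the tools you name:
\begin{align*}
\frac{f_2^2f_3^2f_8f_{12}^2}{f_1^4f_4f_6^2f_{24}}&=\frac{f_2^2}{f_1^2}\cdot\frac{f_3^2}{f_1^2}\cdot\frac{f_8}{f_4}\cdot\frac{f_{12}^2}{f_6^2}\cdot\frac{1}{f_{24}},\\
\frac{f_2f_3^2f_4^2f_{24}}{f_1^4f_6f_8f_{12}}&=\left(\sum_{n=-\infty}^\infty q^{n(3n+1)/2}\right)\frac{f_4^2}{f_1^2}\cdot\frac{f_{24}}{f_1}\cdot\frac{1}{f_8f_{12}}.
\end{align*}
Every factor here has nonnegative coefficients, and $f_3/f_1,\ f_4/f_1\succcurlyeq 1/(1-q)$.

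Two small corrections to your heuristics:
\begin{itemize}
\item In the first quotient there are more numerator factors than copies of $1/f_1$. You therefore need the pairings $f_8/f_4=1/(q^4;q^8)_\infty$ and $f_{12}/f_6=1/(q^6;q^{12})_\infty$.
\item In the second quotient, $f_4^2/f_8=\sum_{n=-\infty}^\infty(-1)^nq^{4n^2}$ sits in the numerator and has alternating signs. It cannot be absorbed into a positive theta factor via \eqref{2-10}; the two copies of $f_4$ must be paired with copies of $1/f_1$ instead.
\end{itemize}
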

	
\noindent{\it Proof.}
 In view of \eqref{1-1}, 
\begin{align}\label{4-47}
\sum_{n=0}^\infty a_{ 1^13^14^{-3} }(n)q^n=\frac{ f_1 f_3}{  f_4^3} .
\end{align}
Xia and Yao \cite[Lemma 3.2]{Xia-Yao-2012} proved that 
\begin{align}\label{4-48}
f_1f_3=\frac{f_2f_8^2f_{12}^4}{
 f_4^2f_6f_{24}^2}-q\frac{f_4^4f_6f_{24}^2}{
 f_2f_8^2f_{12}^2}.
\end{align}
Substituting \eqref{4-48} into \eqref{4-47},  we get 
\[
\sum_{n=0}^\infty a_{ 1^13^14^{-3} }(n)q^n=\frac{f_2f_8^2f_{12}^4}{
	f_4^5f_6f_{24}^2}-q\frac{f_4f_6f_{24}^2}{
	f_2f_8^2f_{12}^2},
\]
which implies that 
\begin{align}\label{4-49}
\sum_{n=0}^\infty a_{ 1^13^14^{-3} }(2n)q^n=\frac{f_1f_4^2f_6^4}{f_2^5f_3f_{12}^2}
\end{align}
and 
\begin{align}\label{4-50}
\sum_{n=0}^\infty a_{ 1^13^14^{-3} }(2n+1)q^n=-\frac{f_{12}}{f_2f_4f_6} \sum_{n=-\infty}^\infty q^{3n^2+2n}\preccurlyeq -\sum_{n=0}^\infty q^n.
\end{align}
Replacing $q$ by $-q$ in \eqref{4-49} yields 
\begin{align}\label{4-51}
\sum_{n=0}^\infty (-1)^n  a_{ 1^13^14^{-3} }(2n)q^n=\frac{f_3f_4f_6}{f_1f_2^2f_{12}} \succcurlyeq\sum_{n=0}^\infty q^n.
\end{align}
Theorem 
\ref{Th-4-5} follows from \eqref{4-50} and \eqref{4-51}.
 This completes the proof. \qed

\begin{theorem}\label{Th-4-6}
	For $n\geq 0$, 
	\[
	a_{ 1^22^{-1}4^{-1} }(4n)>0,\quad  a_{ 1^22^{-1}4^{-1} }(4n+1)<0,
	\quad a_{ 1^22^{-1}4^{-1} }(4n+2)=a_{ 1^22^{-1}4^{-1} }(4n+3)=0.
	\]
\end{theorem}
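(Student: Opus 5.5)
The plan is to $2$-dissect $f_1^2$ using \eqref{4-40} and then observe that, after dividing by $f_2f_4$, the whole expression is supported on exponents $\equiv 0,1 \pmod 4$. By \eqref{1-1},
\[
\sum_{n=0}^\infty a_{1^22^{-1}4^{-1}}(n)q^n=\frac{f_1^2}{f_2f_4}.
\]
Substituting \eqref{4-40} and dividing by $f_2f_4$ gives
\[
\sum_{n=0}^\infty a_{1^22^{-1}4^{-1}}(n)q^n=\frac{f_8^5}{f_4^3f_{16}^2}-2q\frac{f_{16}^2}{f_4f_8}.
\]

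The first term is a power series in $q^4$. The second is $q$ times a power series in $q^4$. Hence there are no terms with exponent $\equiv 2,3\pmod 4$, which gives $a_{1^22^{-1}4^{-1}}(4n+2)=a_{1^22^{-1}4^{-1}}(4n+3)=0$ immediately. Extracting the other two classes and replacing $q^4$ by $q$ yields
\begin{align*}
\sum_{n=0}^\infty a_{1^22^{-1}4^{-1}}(4n)q^n&=\frac{f_2^5}{f_1^3f_4^2}=\frac{f_2^5}{f_1^2f_4^2}\cdot\frac{1}{f_1},\\
\sum_{n=0}^\infty a_{1^22^{-1}4^{-1}}(4n+1)q^n&=-2\frac{f_4^2}{f_1f_2}=-2\,\frac{f_4^2}{f_2}\cdot\frac{1}{f_1}.
\end{align*}

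Positivity now follows as in the earlier theorems. By Gauss' identity \eqref{2-10}, the first series equals $\left(\sum_{n=-\infty}^\infty q^{n^2}\right)\cdot\frac{1}{f_1}$. The theta factor has nonnegative coefficients and constant term $1$, and $1/f_1\succcurlyeq\sum_{n\ge0}q^n$ since it includes the factor $1/(1-q)$. Hence the product is $\succcurlyeq\sum_{n\ge0}q^n$.

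For the second series, \eqref{2-10-1} with $q\to q^2$ gives $f_4^2/f_2=\sum_{n\ge0}q^{n(n+1)}$, which again has nonnegative coefficients and constant term $1$. Therefore the second series is $\preccurlyeq -2\sum_{n\ge0}q^n\preccurlyeq-\sum_{n\ge0}q^n$.

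The only point needing care is checking the exponent bookkeeping after the substitution, namely that $f_2$ cancels exactly so that only $f_4,f_8,f_{16}$ remain. Once that is verified, both sign claims are immediate.
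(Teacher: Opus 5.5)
Your proposal is correct and follows essentially the same route as the paper: substitute \eqref{4-40} into $f_1^2/(f_2f_4)$ to get $\frac{f_8^5}{f_4^3f_{16}^2}-2q\frac{f_{16}^2}{f_4f_8}$, read off the vanishing classes, and extract $\frac{f_2^5}{f_1^3f_4^2}$ and $-2\frac{f_4^2}{f_1f_2}$. Your positivity argument via $\frac{f_2^5}{f_1^2f_4^2}=\sum_{n=-\infty}^{\infty}q^{n^2}$ and $\frac{f_4^2}{f_2}=\sum_{n\ge 0}q^{n(n+1)}$ spells out what the paper leaves implicit.
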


\noindent{\it Proof}.
 By \eqref{1-1}, 
\begin{align}\label{4-52}
\sum_{n=0}^\infty a_{ 1^22^{-1}4^{-1} }(n)q^n=\frac{ f_1^2}{ f_2 f_4} .
\end{align}
 Substituting \eqref{4-40} into \eqref{4-52}, we get
  \[
  \sum_{n=0}^\infty a_{ 1^22^{-1}4^{-1} }(n)q^n= \frac{f_8^5}{
  f_4^3f_{16}^2}-2q\frac{f_{16}^2}{f_4f_8},
  \]
  from which, we deduce that 
    \begin{align}
  \sum_{n=0}^\infty a_{ 1^22^{-1}4^{-1} }(4n)q^n&= \frac{f_2^5}{
  	f_1^3f_{4}^2}\succcurlyeq \sum_{n=0}^\infty q^n ,\label{4-53}\\
  	\sum_{n=0}^\infty a_{ 1^22^{-1}4^{-1} }(4n+1)q^n&= -2\frac{f_4^2}{
  		f_1f_2}\preccurlyeq - \sum_{n=0}^\infty q^n ,
  		\label{4-54}\\
  		\sum_{n=0}^\infty a_{ 1^22^{-1}4^{-1} }(4n+2)q^n&= \sum_{n=0}^\infty a_{ 1^22^{-1}4^{-1} }(4n+3)q^n=0.
  		\label{4-55}
  \end{align}
  Theorem \ref{Th-4-6} follows from \eqref{4-53}--\eqref{4-55}.
   This completes the proof. \qed

    \begin{theorem}\label{Th-4-7}
  	For $n\geq 0$, 
  	\[
  	a_{1^23^{2}4^{-3}}(4n)>0,\
 a_{1^23^{2}4^{-3}}(4n+1)<0,\  a_{1^23^{2}4^{-3}}(4n+2)<0,
  \  a_{1^23^{2}4^{-3}}(4n+3)=0.
  	\]
  \end{theorem}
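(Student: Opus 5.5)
Our plan is to proceed as in Theorems \ref{Th-4-5} and \ref{Th-4-6}: square the $2$-dissection \eqref{4-48} of $f_1f_3$. Since
\[
\sum_{n=0}^\infty a_{1^23^24^{-3}}(n)q^n=\frac{(f_1f_3)^2}{f_4^3},
\]
we write \eqref{4-48} as $f_1f_3=A-qB$ with
\[
A=\frac{f_2f_8^2f_{12}^4}{f_4^2f_6f_{24}^2},\qquad B=\frac{f_4^4f_6f_{24}^2}{f_2f_8^2f_{12}^2}.
\]
A direct check gives $AB=f_4^2f_{12}^2$. The odd part is therefore $-2q\,AB/f_4^3=-2q\,f_{12}^2/f_4$, so
\[
\sum_{n=0}^\infty a_{1^23^24^{-3}}(2n+1)q^n=-2\frac{f_6^2}{f_2}.
\]
This series involves only even powers of $q$, which gives $a_{1^23^24^{-3}}(4n+3)=0$ and
\[
\sum_{n=0}^\infty a_{1^23^24^{-3}}(4n+1)q^n=-2\frac{f_3^2}{f_1}=-\frac{2}{f_3}\sum_{n=0}^\infty c_3(n)q^n .
\]
Since $c_3(n)\ge 0$ and $c_3(0)=c_3(1)=1$, $c_3(2)=2$, the right-hand side is $\preccurlyeq -2(1+q+q^2)/(1-q^3)\preccurlyeq-\sum_{n\ge0}q^n$. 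This disposes of the residues $1$ and $3$.

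For the even part we take $(A^2+q^2B^2)/f_4^3$ and replace $q^2$ by $q$:
\[
\sum_{n=0}^\infty a_{1^23^24^{-3}}(2n)q^n=\frac{f_1^2f_4^4f_6^8}{f_2^7f_3^2f_{12}^4}+q\frac{f_2^5f_3^2f_{12}^4}{f_1^2f_4^4f_6^4}.
\]
Replacing $q$ by $-q$ and using $(-q;-q)_\infty=f_2^3/(f_1f_4)$ (together with its analogue for $f_3$) turns this into
\[
\frac{f_3^2f_4^2f_6^2}{f_1^2f_2f_{12}^2}-q\frac{f_1^2f_6^2f_{12}^2}{f_2f_3^2f_4^2}.
\]
This is a difference, so positivity is not immediate. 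We therefore dissect once more to separate $a(4n)$ from $a(4n+2)$.

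For this, we substitute the square of \eqref{4-32} for $f_1^2/f_3^2$ in the second term. For the first term we use the analogous $2$-dissection of $f_3/f_1$ from \eqref{4-12} (squared), or equivalently dissect $f_3^2/f_1^2$ directly. Collecting even and odd parts should give closed eta-quotients for $\sum a(4n)q^n$ and $\sum a(4n+2)q^n$. We expect the cross terms to simplify, as $AB$ did, since the products of the two components in \eqref{4-12} and \eqref{4-32} are single eta-quotients.

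Finally, we show $\sum a(4n)q^n\succcurlyeq\sum q^n$ and $\sum a(4n+2)q^n\preccurlyeq-\sum q^n$. To do this we write each resulting quotient, after a further $q\to-q$ if necessary, as products of $\sum q^{n^2}$, $\sum q^{3n^2+2n}$, $\sum q^{n(n+1)/2}$ and $c_t$ generating functions times a reciprocal product $1/(1-q^M)\cdots$, and then apply the fact stated at the start of Section 2. The main obstacle is this last step. The two contributions to each residue class may not combine into a single eta-quotient, in which case we would have to bound one term against the other. If that happens, we will instead try a different splitting, writing $(f_1f_3)^2/f_4^3$ as $f_1^2\cdot f_3^2/f_4^3$ and dissecting $f_1^2$ by \eqref{4-40} and $f_3^2$ by \eqref{4-40} with $q\to q^3$, which yields a $4$-dissection more directly.
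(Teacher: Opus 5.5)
Your treatment of the odd indices is correct and is exactly the paper's (write $a(n)$ for $a_{1^23^24^{-3}}(n)$). The odd part of the square of \eqref{4-48} is $-2qAB$ with $AB=f_4^2f_{12}^2$, so $\sum_{n\ge0}a(2n+1)q^n=-2f_6^2/f_2$. This gives $a(4n+3)=0$ and $\sum_{n\ge0}a(4n+1)q^n=-2f_3^2/f_1\preccurlyeq-\sum_{n\ge0}q^n$.

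The gap is in the even indices. Your difference formula for $\sum_{n\ge0}(-1)^na(2n)q^n$ is right, and the sign problem is structural: after $q^2\to q$ the contribution $q^2B^2$ carries $q^1$, so it flips sign under $q\to-q$. The second dissection you propose does not repair this. Squaring \eqref{4-12} and \eqref{4-32}, the cross terms do collapse to single eta-quotients, but each residue class then receives three terms that are not of one sign:
\[
\sum_{n\ge0}a(4n)q^n=\frac{f_2^4f_3^4f_8^2f_{12}^4}{f_1^5f_4^2f_6^4f_{24}^2}+q\frac{f_2^2f_3^4f_4^4f_{24}^2}{f_1^5f_6^2f_8^2f_{12}^2}+2q\frac{f_1f_4f_6^3f_{12}}{f_2^3f_3^2}.
\]
Here the last term is $2q(-q;-q)_\infty^{-1}f_6^3f_{12}/f_3^2=2q-2q^2+4q^3-2q^4+\cdots$. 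Likewise
\[
\sum_{n\ge0}a(4n+2)q^n=\frac{f_1f_6^2f_8^2f_{12}^4}{f_2^2f_3^2f_4^2f_{24}^2}+q\frac{f_1f_4^4f_6^4f_{24}^2}{f_2^4f_3^2f_8^2f_{12}^2}-2\frac{f_2^3f_3^4f_4f_{12}}{f_1^5f_6^3}
\]
is a genuine difference. So the main route ends in exactly the term-against-term comparison you feared, and the proposal supplies no argument for it.

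Your fallback is the paper's route, and it works, but not by producing a $4$-dissection: no further dissection is needed. Write \eqref{4-40} as $f_1^2=\alpha(q)-2q\beta(q)$ with $\alpha,\beta$ series in $q^2$. Then $f_3^2=\alpha(q^3)-2q^3\beta(q^3)$, and the even part of $f_1^2f_3^2$ is $\alpha(q)\alpha(q^3)+4q^4\beta(q)\beta(q^3)$. The second summand now carries $q^4$, i.e.\ $q^2$ after halving, which is invariant under $q\to-q$. This gives \eqref{4-57}, and then
\[
\sum_{n\ge0}(-1)^na(2n)q^n=\frac{f_4^4f_6f_{12}^4}{f_1f_2^2f_3f_8^2f_{24}^2}+4q^2\frac{f_6^3f_8^2f_{24}^2}{f_1f_3f_4^2f_{12}^2}.
\]
The first term equals $(-q^2;q^4)_\infty^2(-q^6;q^{12})_\infty^2\frac{f_6^2}{f_3}\frac{f_6}{f_1}\succcurlyeq\sum_{n\ge0}q^n$. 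The second equals $4q^2\frac{f_8^2}{f_4}\frac{f_{24}^2}{f_{12}}\frac{f_6^2}{f_3}\frac{f_6}{f_1}\frac{1}{f_4f_{12}}\succcurlyeq0$. Together these yield $a(4n)>0$ and $a(4n+2)<0$ at once. Conversely, \eqref{4-40} is awkward for the odd part, where it gives two terms. That is why the paper uses \eqref{4-40} for the even indices and \eqref{4-48} for the odd ones.
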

  
  \noindent{\it Proof.} By \eqref{1-1},
  \begin{align}\label{4-56}
  	  \sum_{n=0}^\infty 
  	  	a_{1^23^{2}4^{-3}}( n)q^n=\frac{f_1^2f_3^2}{f_4^3}. 
  \end{align}
   Substituting \eqref{4-40} into \eqref{4-56}, 
    then picking out those
    terms in which the power of  $q $ is congruent to 0  modulo 2
      and 
    replacing $q^{2}$ by $q$, we obtain 
   \begin{align}\label{4-57}
   	\sum_{n=0}^\infty 
   	a_{1^23^{2}4^{-3}}( 2n)q^n=\frac{f_1f_3f_4^5f_{12}^5}{f_2^5f_6^2f_8^2f_{24}^2}
   	 +4q^2\frac{f_1f_3f_8^2f_{24}^2}{f_2^3f_4f_{12}}. 
   \end{align}
   Replacing $q$ by $-q$ in \eqref{4-57}, we get 
      \begin{align}\label{4-58}
   	\sum_{n=0}^\infty  (-1)^n
   	a_{1^23^{2}4^{-3}}( 2n)q^n=\frac{ f_4^4f_6f_{12}^4 }{f_1f_2^2f_3f_8^2f_{24}^2}
   	+4q^2\frac{f_6^3f_8^2f_{24}^2}{f_1f_3f_4^2f_{12}^2}
   	\succcurlyeq \sum_{n=0}^\infty q^n. 
   \end{align}
      Substituting \eqref{4-48} into \eqref{4-56} and picking out those
   terms in which the power of  $q $ is congruent to 1  modulo 2,
    then dividing by $q$
    and 
   replacing $q^{2}$ by $q$, we have 
     \begin{align*}
   	\sum_{n=0}^\infty 
   	a_{1^23^{2}4^{-3}}( 2n+1)q^n=-2\frac{f_6^2}{f_2},
   \end{align*}
   from which, we deduce that 
       \begin{align}\label{4-59}
   	\sum_{n=0}^\infty 
   	a_{1^23^{2}4^{-3}}( 4n+1)q^n=-2\frac{f_3^2}{f_1} \preccurlyeq -\sum_{n=0}^\infty q^n 
   \end{align}
   and 
       \begin{align} \label{4-60}
   	\sum_{n=0}^\infty 
   	a_{1^23^{2}4^{-3}}( 4n+3)q^n=0.
   \end{align}
   Theorem \ref{Th-4-7}
    follows from \eqref{4-58}--\eqref{4-60}.
     This completes the proof. \qed

  \begin{theorem}\label{Th-4-8}
  	Define 
  	\begin{align*}
  		S_7:=&\{( 3,-1,-1,-2),
  		(4,-1,0,-3), (4,0,0,-6), (4,1,0,-9)\}.
  	\end{align*}
  	If $(t_1,t_2,t_3,t_4)\in S_7$, then for $n\geq 0$,
  	\begin{align*}
  		a_{1^{t_1}2^{t_2}3^{t_3}4^{t_4}}(2n)  >0,\quad 
  		a_{1^{t_1}2^{t_2}3^{t_3}4^{t_4}}(4n+1) <0,\quad 
  		a_{1^{t_1}2^{t_2}3^{t_3}4^{t_4}}(4n+3) >0.
  	\end{align*}
  \end{theorem}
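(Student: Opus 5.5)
We propose to 2-dissect each eta-quotient in $S_7$ and read the signs off the two halves. The even half should turn out to have positive coefficients directly. The odd half, after $q\to -q$, should become $-(\text{positive series})$, which is exactly the pattern $a(4n+1)<0$, $a(4n+3)>0$.

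\textbf{The three cases $(4,-1,0,-3)$, $(4,0,0,-6)$, $(4,1,0,-9)$.} All three have the form $f_1^4f_2^{t_2}/f_4^{s}$, so the only odd-power factor is $f_1^4$.

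First square \eqref{4-40}, i.e. $f_1^2=A-2qB$ with $A=f_2f_8^5/(f_4^2f_{16}^2)$ and $B=f_2f_{16}^2/f_8$. This gives
\[
f_1^4=A^2+4q^2B^2-4qAB .
\]
Using the classical identity $f_4^{10}/(f_8^4)+4q^2 f_{16}^4f_4^2/ f_8^{2}\cdots$ (equivalently, the known 2-dissection $f_1^4=\frac{f_4^{10}}{f_2^2f_8^4}-4q\frac{f_2^2f_8^4}{f_4^2}$, which is the $q\to -q$ companion of $\varphi(q)^2$), we get after $q^2\to q$:
\[
\sum a(2n)q^n=\frac{f_1^{t_2-2}f_2^{10-s}}{f_4^4}\cdot(\text{up to the }f_2\text{ from }f_1^4),\qquad
\sum a(2n+1)q^n=-4\frac{f_1^{t_2+2}f_4^4}{f_2^{s+2}}.
\]
The exponents will be recomputed carefully at this point.

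For the even part, I would write $f_2^{k}/f_1^{j}$ in terms of $\varphi(q)=f_2^5/(f_1^2f_4^2)$, $\psi(q)=f_2^2/f_1$, and pure reciprocals of $f$'s. For example, when $s=6$ and $t_2=0$,
\[
\frac{f_2^4}{f_1^2f_4^4}=\frac{\varphi(q)}{f_2f_4^2}\succcurlyeq\sum q^n .
\]
For the odd part, replace $q$ by $-q$ using $f_1\mapsto f_2^3/(f_1f_4)$. Then $\sum(-1)^na(2n+1)q^n$ becomes $-4$ times a quotient. I expect to show this quotient is a product of theta series ($\varphi$, $\psi$, $c_t$-generating functions) and reciprocals of $f$'s, and hence $\succcurlyeq\sum q^n$, exactly as in \eqref{4-43}--\eqref{4-46}. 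Since $(-1)^n a(2n+1)<0$ for all $n$, this gives $a(4n+1)<0$ and $a(4n+3)>0$.

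\textbf{The case $(3,-1,-1,-2)$.} Here the quotient is $f_1^3/(f_2f_3f_4^2)$. I would write
\[
\frac{f_1^3}{f_3}=\frac{f_1^4}{f_1f_3},
\]
or alternatively multiply the dissection \eqref{4-40} of $f_1^2$ by \eqref{4-32} for $f_1/f_3$. This produces a four-term product, which regroups into even and odd parts. Each part should again be a sum of at most two eta-quotients, possibly after applying a further Xia--Yao-type identity to collapse a sum into a single product. The even part should be a sum of positive eta-quotients, and the odd part, after $q\to-q$, should be $-(\text{positive})$, proved by the same recognition of Jacobi triple product series \eqref{2-5}, \eqref{2-6}, \eqref{2-10}.

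\textbf{Main obstacle.} The hardest step is the $(3,-1,-1,-2)$ case. Simplifying the cross terms into single products, rather than a difference of products, may fail. If it does, I would fall back on showing the dominant positive term majorizes the other term coefficientwise. Alternatively, I would choose a different dissection, e.g. first use \eqref{4-3} on $f_3^3/f_1$ inverted, or \eqref{4-48} on $f_1f_3$ combined with $f_1^2/f_3^2$. The final positivity checks, of the form $F(q)\succcurlyeq 1+q+\cdots+q^{M-1}$ with a $1/(1-q^M)$ factor, are routine once single-product forms are in hand.
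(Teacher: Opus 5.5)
The three cases $(4,-1,0,-3)$, $(4,0,0,-6)$, $(4,1,0,-9)$ are handled as in the paper. You use the 2-dissection \eqref{4-68} of $f_1^4$, read off the even part, and apply $q\to -q$ to the odd part, and your exponent formulas $f_1^{t_2-2}f_2^{10-s}/f_4^4$ and $-4f_1^{t_2+2}f_4^4/f_2^{s+2}$ are correct. After $q\to-q$ the odd parts are $-4f_4^3/(f_1f_2^2)$, $-4f_4^2/(f_1^2f_2^2)$ and $-4f_4/(f_1^3f_2^2)$. Each is visibly $\preccurlyeq -\sum q^n$, since $f_4/f_2=1/(q^2;q^4)_\infty$. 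The even parts are $\varphi(q)\psi(q)/f_4^2$, $\varphi(q)/(f_2f_4^2)$ and $1/((q;q^2)_\infty f_4^4)$.

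The genuine gap is the case $(3,-1,-1,-2)$, which you leave unresolved. The route you propose, multiplying \eqref{4-40} by \eqref{4-32}, does not yield the sign of $a(2n)$. Write $f_1^2=A-2qB$ and $f_1/f_3=C-qD$. Then the even part of $f_1^3/(f_2f_3f_4^2)$ becomes, after $q^2\to q$,
\[
\frac{f_1f_4^4f_{12}^2}{f_2^4f_3^2f_8f_{24}}+2q\,\frac{f_1f_4f_6f_8f_{24}}{f_2^3f_3^2f_{12}} .
\]
Both terms carry $f_1$ in the numerator, and neither is coefficientwise nonnegative: the first has coefficient $-1$ at $q$, the second $-2$ at $q^2$. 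The claim is $a(2n)>0$ with no alternation, so $q\to-q$ cannot repair this. You would have to prove that this sum collapses to a single positive product, and likewise that $AD+2BC$ collapses for the odd part. Those collapsing identities are exactly what your proposal lacks. The fallbacks you list (a domination argument, or products built from \eqref{4-3} or \eqref{4-48}) are not carried out and run into the same problem.

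The missing ingredient is the direct 2-dissection used in the paper (Xia--Yao, \eqref{4-63}):
\[
\frac{f_1^3}{f_3}=\frac{f_4^3}{f_{12}}-3q\,\frac{f_2^2f_{12}^3}{f_4f_6^2}.
\]
Dividing by $f_2f_4^2$ gives at once
\[
\sum_{n\ge0} a(2n)q^n=\frac{f_2}{f_1f_6}=\frac{1}{(q;q^2)_\infty f_6}\succcurlyeq \sum_{n\ge0} q^n,\qquad \sum_{n\ge0} a(2n+1)q^n=-3\,\frac{f_1f_6^3}{f_2^3f_3^2}.
\]
Under $q\to-q$ the odd part becomes
\[
-3\,\frac{f_3^2f_{12}^2}{f_1f_4f_6^3}=-3\left(\sum_{n=-\infty}^{\infty}q^{n(3n+1)/2}\right)\frac{1}{f_2f_4\,(q^6;q^{12})_\infty^2}
\]
by \eqref{2-6}, and this is $\preccurlyeq -\sum q^n$. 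Your four-term expansion is consistent with this, since $AC+2q^2BD=f_4^3/f_{12}$ and $AD+2BC=3f_2^2f_{12}^3/(f_4f_6^2)$. However, establishing those two identities is equivalent to proving \eqref{4-63}, which is the step your plan does not supply.
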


  \noindent{\it Proof.} By \eqref{1-1},
  \begin{align}
  	\sum_{n=0}^\infty a_{1^32^{-1}3^{-1}4^{-2}}(n)q^n =
  	\frac{f_1^3}{f_2f_3f_4^2}. \label{4-62}
  \end{align}
  Xia and Yao \cite[Lemma 2.4]{Xia-Yao}
   proved that 
   \begin{align}
   	\frac{f_1^3}{f_3}=\frac{f_4^3}{f_{12}}-3q\frac{ f_2^2f_{12}^3
   	}{f_4f_6^2}.\label{4-63}
   \end{align}
   Substituting \eqref{4-63} into \eqref{4-62}, we get 
    \begin{align*}
   	\sum_{n=0}^\infty a_{1^32^{-1}3^{-1}4^{-2}}(n)q^n =
   	\frac{f_4 }{f_2f_{12}}-3q\frac{ f_2 f_{12}^3
   	}{f_4^3f_6^2},
   \end{align*}
   from which, we deduce that 
    \begin{align}\label{4-64}
   	\sum_{n=0}^\infty a_{1^32^{-1}3^{-1}4^{-2}}(2n)q^n =
   	\frac{f_2 }{f_1f_{6}}\succcurlyeq \sum_{n=0}^\infty q^n
   \end{align}
   and 
       \begin{align}\label{4-65}
   	\sum_{n=0}^\infty a_{1^32^{-1}3^{-1}4^{-2}}(2n+1)q^n =
 -3\frac{ f_1 f_{6}^3
   	}{f_2^3f_3^2}. 
   \end{align}
   Replacing $q$ by $-q$ in \eqref{4-65} yields 
          \begin{align}\label{4-66}
   	\sum_{n=0}^\infty  (-1)^n a_{1^32^{-1}3^{-1}4^{-2}}(2n+1)q^n =
   	-3\frac{ f_3^2f_{12}^2
   	}{f_1f_4f_6^3}\preccurlyeq -\sum_{n=0}^\infty q^n. 
   \end{align}
   It follows from \eqref{4-64}
    and \eqref{4-66} that  Theorem \ref{Th-4-8}
     is true when $(t_1,t_2,t_3,t_4)=( 3,-1,-1,-2)$. 
     
     In view of \eqref{1-1},
     \begin{align}\label{4-67}
     	\sum_{n=0}^\infty
     	a_{1^42^{-1}4^{-3}}(n) q^n =\frac{f_1^4}{f_2f_4^3}.
     \end{align}
 Xia and Yao \cite[Lemma 2.3]{Xia-Yao}
  proved that 
  \begin{align}
  	f_1^4 =\frac{f_4^{10}}{f_2^2f_8^4}
  	-4q\frac{f_2^2f_8^4}{f_4^2}.\label{4-68}
  \end{align}
  Substituting \eqref{4-68}
   into \eqref{4-67}, we get 
      \begin{align*} 
   	\sum_{n=0}^\infty
   	a_{1^42^{-1}4^{-3}}(n) q^n =\frac{f_4^{7}}{f_2^3f_8^4}
   	-4q\frac{f_2f_8^4}{f_4^5}.
   \end{align*}
     From the above identity, we have 
           \begin{align} \label{4-69}
     	\sum_{n=0}^\infty
     	a_{1^42^{-1}4^{-3}}(2n) q^n =\frac{f_2^{7}}{f_1^3f_4^4}
     =\frac{1}{f_4^2}\left(\sum_{n=-\infty}^\infty q^{n^2}\right)
     \left(\sum_{n=0}^\infty c_2(n) q^n\right)	\succcurlyeq \sum_{n=0}^\infty q^n
     \end{align}
     and 
          \begin{align*} 
     	\sum_{n=0}^\infty
     	a_{1^42^{-1}4^{-3}}(2n+1) q^n = 
     	-4 \frac{f_1f_4^4}{f_2^5}.
     \end{align*}
     Replacing $q$ by $-q$ in the above identity yields 
          \begin{align} 
     	\sum_{n=0}^\infty (-1)^n 
     	a_{1^42^{-1}4^{-3}}(2n+1) q^n = 
     	-4 \frac{ f_4^3}{f_1f_2^2} \preccurlyeq - \sum_{n=0}^\infty 
     	 q^n. \label{4-70}
     \end{align}
      It follows from \eqref{4-69}
     and \eqref{4-70} that  Theorem \ref{Th-4-8}
     is true when $(t_1,t_2,t_3,t_4)=( 4,-1,0,-3)$. 
     
     Using the same method for proving \eqref{4-69}
     and \eqref{4-70}, one can prove that 
              \begin{align}  
     	\sum_{n=0}^\infty
     	a_{1^44^{-6}}(2n) q^n& =\frac{f_2^{4}}{f_1^2f_4^4}
     	\succcurlyeq \sum_{n=0}^\infty q^n
   ,\label{4-71}\\
     	\sum_{n=0}^\infty (-1)^n 
     	a_{1^4 4^{-6}}(2n+1) q^n &= 
     	-4 \frac{ f_4^2}{f_1^2f_2^2} \preccurlyeq - \sum_{n=0}^\infty 
     	q^n  \label{4-72}
     \end{align}
     and 
           \begin{align}  
     	\sum_{n=0}^\infty
     	a_{1^42^1 4^{-9}}(2n) q^n& =\frac{f_2}{f_1f_4^4}
     	\succcurlyeq \sum_{n=0}^\infty q^n
     	,\label{4-73}\\
     	\sum_{n=0}^\infty (-1)^n 
     	a_{1^42^1 4^{-9}}(2n+1) q^n &= 
     	-4 \frac{ f_4}{f_1^3f_2^2} \preccurlyeq - \sum_{n=0}^\infty 
     	q^n.  \label{4-74}
     \end{align}
           It follows from \eqref{4-71}--\eqref{4-74} that  Theorem \ref{Th-4-8}
     is true when $(t_1,t_2,t_3,t_4)=( 4,0,0,-6)$ or  $(t_1,t_2,t_3,t_4)=( 4,1,0,-9)$. This completes 
      the proof of Theorem \ref{Th-4-8}. \qed

  \begin{theorem}\label{Th-4-9}
  	For $n\geq 0$, 
  	\[
  	 a_{1^33^{-1}4^{-4}}(4n)>0,\
  	   a_{1^33^{-1}4^{-4}}(4n+1)<0,\  a_{1^33^{-1}4^{-4}}(4n+2)=0,
  	   \  a_{1^33^{-1}4^{-4}}(4n+3)>0.
  	\]
  	\end{theorem}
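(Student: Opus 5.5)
The plan is to dissect the generating function twice by parity, as in the proof of Theorem \ref{Th-4-8}. By \eqref{1-1},
\[
\sum_{n=0}^\infty a_{1^33^{-1}4^{-4}}(n)q^n=\frac{f_1^3}{f_3f_4^4}.
\]
Substituting \eqref{4-63} gives
\[
\frac{1}{f_4f_{12}}-3q\frac{f_2^2f_{12}^3}{f_4^5f_6^2}.
\]
The first term contains only even powers and the second only odd powers. Hence
\[
\sum_{n\ge0} a_{1^33^{-1}4^{-4}}(2n)q^n=\frac{1}{f_2f_6}.
\]
This involves only even powers of $q$, which immediately gives $a_{1^33^{-1}4^{-4}}(4n+2)=0$. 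It also gives
\[
\sum_{n\ge0} a_{1^33^{-1}4^{-4}}(4n)q^n=\frac{1}{f_1f_3}\succcurlyeq\frac{1}{1-q}=\sum_{n\ge0} q^n,
\]
since $1/(f_1f_3)$ is a product of factors $1/(1-q^k)$. This settles the residue classes $0$ and $2$ modulo $4$.

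For the odd part we have
\[
\sum_{n\ge0} a_{1^33^{-1}4^{-4}}(2n+1)q^n=-3\frac{f_6^3}{f_2^5}\Big(\frac{f_1}{f_3}\Big)^2.
\]
I will 2-dissect $(f_1/f_3)^2$ by squaring \eqref{4-32}, written as $f_1/f_3=A-qB$ with
\[
A=\frac{f_2f_{16}f_{24}^2}{f_6^2f_8f_{48}},\qquad B=\frac{f_2f_8^2f_{12}f_{48}}{f_4f_6^2f_{16}f_{24}}.
\]
The odd part of $(f_1/f_3)^2$ is $-2qAB$, and
\[
AB=\frac{f_2^2f_8f_{12}f_{24}}{f_4f_6^4}.
\]
Extracting odd powers, dividing by $q$ and replacing $q^2$ by $q$ should give
\[
\sum_{n\ge0} a_{1^33^{-1}4^{-4}}(4n+3)q^n=6\,\frac{f_4f_6f_{12}}{f_1^3f_2f_3}
=\frac{6}{f_1^2}\cdot\frac{f_4}{f_2}\cdot\frac{f_6}{f_3}\cdot\frac{f_{12}}{f_1}.
\]
Each factor here has nonnegative coefficients: $f_4/f_2=1/(q^2;q^4)_\infty$, $f_6/f_3=1/(q^3;q^6)_\infty$, and $f_{12}/f_1$ is $1/(1-q^k)$ over all $k$ with $12\nmid k$. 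Moreover the whole product contains the factor $1/(1-q)$. So this series is $\succcurlyeq\sum q^n$, which gives $a(4n+3)>0$.

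The main obstacle is the class $4n+1$. From $(A-qB)^2$ its generating function is
\[
-3\,\frac{f_3^3}{f_1^5}\bigl(A_0^2+qB_0^2\bigr),
\]
where $A_0,B_0$ denote $A,B$ with $q^2$ replaced by $q$. The term $A_0^2$ carries a factor $f_{12}^4/f_{24}^2=\varphi(-q^{12})^2$, which has coefficients of both signs. I would first try to pair this factor with the large positive factor $1/f_1^3$ and argue a coefficientwise majorization. For instance, I would show
\[
\frac{\varphi(-q^{12})^2}{(q;q)_\infty^3}\succcurlyeq\frac{1}{1-q}
\]
by writing $\varphi(-q^{12})^2=\varphi(-q^{12})^2$ times $1/(1-q^{12})^{m}$ absorbed from $1/f_1^3$, using that $1/f_{12}^2$ times $\varphi(-q^{12})^2=f_{12}^2/f_{24}^2\cdot$ stays manageable, i.e.\ rewriting $f_{12}^4/(f_{24}^2 f_1^3)=\bigl(f_{12}^2/f_{24}\bigr)^2/f_1^3$ and cancelling $f_{12}^2$ against the part of $f_1^3$ indexed by multiples of $12$, leaving $1/\bigl(f_{24}^2\,(q^{12};q^{12})_\infty\cdot\prod_{12\nmid k}(1-q^k)^3\bigr)$, which is positive. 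If this bookkeeping fails, the fallback is a different dissection of the even part of $(f_1/f_3)^2$ that avoids the sign-mixed factor. One option is to combine \eqref{4-40} for $f_1^2$ with \eqref{m-1} at $q\to q^3$ for $1/f_3^2$, and then handle the resulting single negative cross term $-4q^4(\cdots)$ by the same domination argument. Either route would give $\sum_{n\ge0} a_{1^33^{-1}4^{-4}}(4n+1)q^n\preccurlyeq-\sum q^n$, completing the proof.
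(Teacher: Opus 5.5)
\textbf{The $4n+1$ case is not proved.} Your treatment of the classes $4n$ and $4n+2$ is the paper's, and your class $4n+3$ is correct and complete. The odd part of $(A-qB)^2$ gives $\sum_{n\ge0}a_{1^33^{-1}4^{-4}}(4n+3)q^n=6f_4f_6f_{12}/(f_1^3f_2f_3)$, which is a product of factors $1/(1-q^k)$.

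The class $4n+1$ fails for three reasons.
\begin{enumerate}
\item Your cancellation is miscounted. Since $f_1^3$ contains only $f_{12}^3$,
\[
\frac{f_{12}^4}{f_{24}^2f_1^3}=\frac{f_{12}}{f_{24}^2}\prod_{12\nmid k}\frac{1}{(1-q^k)^3}.
\]
This leaves a net factor $f_{12}$ upstairs, and $f_{12}/f_{24}=(q^{12};q^{24})_\infty$ is still sign-mixed. The expression you say is ``left'' actually equals $f_{12}^2/(f_{24}^2f_1^3)$, which is not the quantity you need. Your auxiliary inequality happens to be true, but only after pairing each $1-q^{12m}$, $m$ odd, with $(1-q^{6m})^{-3}$.
\item You never treat the term $qB_0^2$, which carries its own sign-mixed factor $f_4^4/f_8^2=\varphi(-q^4)^2$.
\item You close by hedging between two routes.
\end{enumerate}
As written, $a_{1^33^{-1}4^{-4}}(4n+1)<0$ is unsupported.

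\textbf{How to close your route.} Count exponents over the whole products, not over one factor at a time. Explicitly,
\[
\sum_{n\ge0}a_{1^33^{-1}4^{-4}}(4n+1)q^n=-3\left(\frac{f_8^2f_{12}^4}{f_1^3f_3f_4^2f_{24}^2}+q\,\frac{f_4^4f_6^2f_{24}^2}{f_1^3f_2^2f_3f_8^2f_{12}^2}\right).
\]
Write each quotient as $\prod_k(1-q^k)^{e(k)}$ and run through $k$ modulo $24$. You get $e(k)\le -2$ for the first quotient and $e(k)\le -1$ for the second. The reasons are that $1/(f_1^3f_3)$ already contains $f_{12}^{-4}$, and $1/(f_1^3f_2^2)$ contains $f_4^{-5}$. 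So both quotients have nonnegative coefficients, and the first is $\succcurlyeq\sum q^n$.

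\textbf{Comparison with the paper.} The paper avoids the second dissection entirely. Replacing $q$ by $-q$ in $\sum a(2n+1)q^n=-3f_1^2f_6^3/(f_2^5f_3^2)$ gives $-3f_2f_3^2f_{12}^2/(f_1^2f_4^2f_6^3)$. The same exponent count shows all exponents of this product are $\le 0$, with $e(1)=-2$. This settles $4n+1$ and $4n+3$ at once. Your route costs more bookkeeping, but in exchange it gives separate explicit generating functions for the two odd classes.
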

  	
  	\noindent{\it Proof.}
  	 The generating function of $a_{1^33^{-1}4^{-4}}(n)$
  	  is 
  	  \[
  	   \sum_{n=0}^\infty
  	  a_{1^33^{-1}4^{-4}}(n)q^n =\frac{f_1^3}{
  	  	f_3f_4^4}.
  	  \]
  Substituting \eqref{4-63} into the above identity, we get 
  \[
  \sum_{n=0}^\infty
   a_{1^33^{-1}4^{-4}}(n)q^n  =\frac{1}{f_4f_{12}}-3q\frac{ f_2^2f_{12}^3
     }{f_4^5f_6^2},
  \]
  which yields 
    \begin{align}
  \sum_{n=0}^\infty
a_{1^33^{-1}4^{-4}}(4n)q^n &=\frac{1}{f_1f_{3}} \succcurlyeq 
  \sum_{n=0}^\infty q^n 
,\label{4-75}\\
  \sum_{n=0}^\infty
a_{1^33^{-1}4^{-4}}(4n+2)q^n &=0
, \label{4-76}\\
  \sum_{n=0}^\infty
a_{1^33^{-1}4^{-4}}(2n+1)q^n &= -3 \frac{ f_1^2f_{6}^3
  }{f_2^5f_3^2}.\label{4-77}
  \end{align}
  Replacing $q$ by $-q$ in \eqref{4-77}
   yields 
    \begin{align}\label{4-78}
  \sum_{n=0}^\infty (-1)^n 
  a_{1^33^{-1}4^{-4}}(2n+1)q^n = -3 \frac{f_2f_3^2f_{12}^2
  }{f_1^2f_4^2f_6^3} \preccurlyeq -  \sum_{n=0}^\infty q^n .
\end{align}
Theorem \ref{Th-4-9} follows from
 \eqref{4-75}, \eqref{4-76} and \eqref{4-78}.
  This completes the proof. \qed

    \begin{theorem}\label{Th-4-10}
  	For $n\geq 0$, 
  	\[
  	 a_{1^42^{-2}4^{-1}}(2n)>0,\
   a_{1^42^{-2}4^{-1}}(4n+1)<0,\   a_{1^42^{-2}4^{-1}}(4n+3)=0 .
  	\]
  \end{theorem}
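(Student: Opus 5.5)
The plan is to split the generating function by the parity of the exponent using the $2$-dissection \eqref{4-68} of $f_1^4$, exactly as in the proof of Theorem~\ref{Th-4-8}. By \eqref{1-1},
\[
\sum_{n=0}^\infty a_{1^42^{-2}4^{-1}}(n)q^n=\frac{f_1^4}{f_2^2f_4}.
\]
Substituting \eqref{4-68} gives
\[
\sum_{n=0}^\infty a_{1^42^{-2}4^{-1}}(n)q^n=\frac{f_4^{9}}{f_2^4f_8^4}-4q\frac{f_8^4}{f_4^3}.
\]
Both terms on the right are series in $q^2$ apart from the explicit factor $q$. So the first term carries the even-indexed coefficients and the second carries the odd-indexed ones.

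For the even part, I will pick out the even powers and replace $q^2$ by $q$, which gives
\[
\sum_{n=0}^\infty a_{1^42^{-2}4^{-1}}(2n)q^n=\frac{f_2^{9}}{f_1^4f_4^4}=\left(\frac{f_2^5}{f_1^2f_4^2}\right)^2\frac{1}{f_2}.
\]
By Gauss's identity \eqref{2-10}, this equals $\left(\sum_{n=-\infty}^\infty q^{n^2}\right)^2\cdot\frac{1}{f_2}$. The squared theta series has nonnegative coefficients and is $\succcurlyeq 1+4q$. The factor $1/f_2$ has nonnegative coefficients and is $\succcurlyeq 1/(1-q^2)$. Hence the product is $\succcurlyeq (1+q)/(1-q^2)=\sum_{n\ge0}q^n$ by the fact recorded at the start of Section~2 (with $M=2$), which gives $a_{1^42^{-2}4^{-1}}(2n)>0$.

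For the odd part, I will take the odd powers, divide by $q$ and replace $q^2$ by $q$, which gives
\[
\sum_{n=0}^\infty a_{1^42^{-2}4^{-1}}(2n+1)q^n=-4\frac{f_4^4}{f_2^3}.
\]
This is a series in $q^2$, so $a_{1^42^{-2}4^{-1}}(4n+3)=0$. Extracting the even powers and replacing $q^2$ by $q$ then gives
\[
\sum_{n=0}^\infty a_{1^42^{-2}4^{-1}}(4n+1)q^n=-4\frac{f_2^4}{f_1^3}=-4\left(\sum_{n=0}^\infty q^{n(n+1)/2}\right)^2\frac{1}{f_1},
\]
using \eqref{2-10-1}. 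Since $1/f_1\succcurlyeq \sum_{n\ge0} q^n$ and the squared triangular series has nonnegative coefficients with constant term $1$, this is $\preccurlyeq -\sum_{n\ge0}q^n$, giving $a_{1^42^{-2}4^{-1}}(4n+1)<0$.

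The only step needing care is the bookkeeping of the eta-quotient exponents after substituting \eqref{4-68} and dividing by $f_2^2f_4$. In particular, I need to confirm that the odd part comes out purely in $q^2$ without any further dissection, since that is what produces the vanishing of $a_{1^42^{-2}4^{-1}}(4n+3)$. The positivity arguments themselves are routine.
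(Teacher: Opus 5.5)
Your proof is correct and is essentially the paper's own argument: substituting the $2$-dissection \eqref{4-68} and extracting residue classes gives the same generating functions as the paper, namely $\frac{1}{f_2}\bigl(\sum_{n=-\infty}^\infty q^{n^2}\bigr)^2$ for $a_{1^42^{-2}4^{-1}}(2n)$, $-4f_2^4/f_1^3$ for $a_{1^42^{-2}4^{-1}}(4n+1)$, and $0$ for $a_{1^42^{-2}4^{-1}}(4n+3)$. Your exponent bookkeeping checks out, and your positivity justifications (the bound $(1+4q)/(1-q^2)$ and the factorization $f_2^4/f_1^3=(f_2^2/f_1)^2/f_1$) only spell out details the paper leaves implicit.
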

  
  \noindent{\it Proof.}
   The generating function of  $a_{1^42^{-2}4^{-1}}(n)$
    is 
      \begin{align}\label{4-79}
  \sum_{n=0}^\infty   
  a_{1^42^{-2}4^{-1}}(n)q^n =  \frac{ f_1^4
  }{f_2^2f_4}.
  \end{align}
  Substituting \eqref{4-68} into \eqref{4-79},
   we deduce that 
       \begin{align}
  \sum_{n=0}^\infty   
  a_{1^42^{-2}4^{-1}}(2n)q^n &=  \frac{1}{f_2}\left(
  \sum_{n=-\infty}^\infty q^{n^2}\right)^2 \succcurlyeq \sum_{n=0}^\infty q^n 
, \label{4-80}\\
  \sum_{n=0}^\infty   
  a_{1^42^{-2}4^{-1}}(4n+1)q^n &= -4 \frac{ f_2^4
  }{f_1^3}\preccurlyeq - \sum_{n=0}^\infty q^n \label{4-81}
  \end{align}
  and 
   \begin{align}
  \sum_{n=0}^\infty   
  a_{1^42^{-2}4^{-1}}(4n+3)q^n = 0. \label{4-82}
\end{align}
Theorem \ref{Th-4-10}
 follows from \eqref{4-80}--\eqref{4-82}. This completes the proof. \qed

	\section{Sign changes with period 5}
	
This section is devoted to the proofs of  several  conjectures 
of  Bringmann, Han, Heim and Kane
\cite{Bringmann-1}	on sign changes with period 5. 
  
    \begin{theorem}\label{Th-5-1}
	For $n\geq 0$, 
	\begin{align*}
a_{1^{-1}2^{2}5^{-1}}(5n)&>0,\
a_{1^{-1}2^{2}5^{-1}}(5n+1)>0,\   a_{1^{-1}2^{2}5^{-1}}(5n+2)=0 ,\\ a_{1^{-1}2^{2}5^{-1}}(5n+3)&>0,\  a_{1^{-1}2^{2}5^{-1}}(5n+4)=0 .
	\end{align*}
\end{theorem}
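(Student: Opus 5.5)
The plan is to write the generating function as a theta function divided by $f_5$, and then $5$-dissect the theta function. By \eqref{1-1} and \eqref{2-10-1},
\[
\sum_{n=0}^\infty a_{1^{-1}2^{2}5^{-1}}(n)q^n=\frac{f_2^2}{f_1f_5}=\frac{1}{f_5}\sum_{k=0}^\infty q^{k(k+1)/2}.
\]
Since $1/f_5$ is a series in $q^5$, the residue class of an exponent modulo $5$ is decided entirely by the factor $\psi(q):=\sum_{k\ge 0}q^{k(k+1)/2}$.

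As $k$ runs through residues modulo $5$, the triangular number $k(k+1)/2$ takes the values $0,1,3,1,0$. Hence no exponent of $\psi(q)$ is congruent to $2$ or $4$ modulo $5$. This immediately gives $a_{1^{-1}2^{2}5^{-1}}(5n+2)=a_{1^{-1}2^{2}5^{-1}}(5n+4)=0$.

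For the other three classes I would use the classical $5$-dissection of $\psi(q)$, in Ramanujan's notation $f(a,b)=\sum_{n=-\infty}^\infty a^{n(n+1)/2}b^{n(n-1)/2}$:
\[
\psi(q)=f(q^{10},q^{15})+q\,f(q^{5},q^{20})+q^{3}\psi(q^{25}).
\]
This is proved by writing $\psi(q)=\sum_{n=-\infty}^\infty q^{2n^2+n}$ and splitting $n$ by its residue modulo $5$. The classes $n\equiv 0$ and $n\equiv 2$ give $f(q^{10},q^{15})$ and $q^3\psi(q^{25})$ respectively. The classes $n\equiv 1$ and $n\equiv 4$ combine into $q f(q^5,q^{20})$. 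Alternatively, the identity can be cited from Berndt's book.

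Extracting terms and replacing $q^5$ by $q$ then gives
\begin{align*}
\sum_{n=0}^\infty a_{1^{-1}2^{2}5^{-1}}(5n)q^n&=\frac{f(q^2,q^3)}{f_1},\\
\sum_{n=0}^\infty a_{1^{-1}2^{2}5^{-1}}(5n+1)q^n&=\frac{f(q,q^4)}{f_1},\\
\sum_{n=0}^\infty a_{1^{-1}2^{2}5^{-1}}(5n+3)q^n&=\frac{\psi(q^5)}{f_1}.
\end{align*}
Each numerator is a theta series with nonnegative coefficients and constant term $1$. Moreover $1/f_1\succcurlyeq 1/(1-q)=\sum_{n\ge0}q^n$, using the fact stated at the start of Section 2 with $M=1$. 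So each right-hand side is $\succcurlyeq\sum_{n\ge 0}q^n$, which proves the three positivity statements.

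The only step needing care is verifying the $5$-dissection of $\psi(q)$: one must match each residue class to the correct theta function, in particular the two classes that merge into $f(q^5,q^{20})$. This is a routine completion-of-squares computation.
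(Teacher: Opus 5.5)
Your proof is correct and follows the paper's route. The paper cites the same $5$-dissection $f_2^2/f_1=f(q^{10},q^{15})+qf(q^5,q^{20})+q^3f_{50}^2/f_{25}$ from Berndt, divides by $f_5$, and reads off the same three quotients. It writes them as $(-q^2,-q^3;q^5)_\infty/(q,q^2,q^3,q^4;q^5)_\infty$, $(-q,-q^4;q^5)_\infty/(q,q^2,q^3,q^4;q^5)_\infty$ and $f_{10}^2/(f_1f_5)$, which are your $f(q^2,q^3)/f_1$, $f(q,q^4)/f_1$ and $\psi(q^5)/f_1$. Your triangular-number argument for the two zero classes is a harmless extra that does not need the dissection.

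If you prove the dissection yourself rather than citing Berndt, your residue-class bookkeeping needs fixing. For $\psi(q)=\sum_{n\in\mathbb{Z}}q^{2n^2+n}$, the exponent $2n^2+n$ is congruent to $0,3,0,1,1$ modulo $5$ for $n\equiv 0,1,2,3,4$. The correct assignment is:
\begin{itemize}
\item $n\equiv 1$ alone gives $q^3\psi(q^{25})$, since $2(5m+1)^2+(5m+1)=3+25(2m^2+m)$.
\item $n\equiv 0$ and $n\equiv 2$, with exponents $50m^2+5m$ and $50m^2+45m+10$, together give $f(q^{10},q^{15})=\sum_k q^{(25k^2-5k)/2}$, from even and odd $k$ respectively.
\item $n\equiv 3$ and $n\equiv 4$, with exponents $50m^2+65m+21$ and $50m^2+85m+36$, together give $qf(q^5,q^{20})$.
\end{itemize}
As written, your sketch omits $n\equiv 3$ entirely. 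It also merges $n\equiv 1$ (exponents $\equiv 3$) with $n\equiv 4$ (exponents $\equiv 1$), which cannot both lie in $qf(q^5,q^{20})$. Finally, it assigns $n\equiv 0$ alone and $n\equiv 2$ alone to theta functions they do not equal. The identity itself is true, so citing it, as the paper does, closes the argument.
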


\noindent{\it Proof}. 
 The generating function 
  of  $  a_{1^{-1}2^{2}5^{-1}}(n)$ is 
  \begin{align}\label{5-1}
  \sum_{n=0}^\infty   
  a_{1^{-1}2^{2}5^{-1}}(n)q^n = \frac{f_2^2}{f_1f_5}.
  \end{align}
  It follows from \cite[Corollary (ii), p. 49]{Berndt}
\[
\frac{f_2^2}{f_1}=f(q^{10},q^{15})+qf(q^5,q^{20})+q^3\frac{f_{50}^2}{f_{25}},
\]
where $f(a,b)=(-a,-b,ab;ab)_\infty$. Substituting 
the above identity into \eqref{5-1}, we get 
   \begin{align*} 
 	\sum_{n=0}^\infty   
 	a_{1^{-1}2^{2}5^{-1}}(5n)q^n& = \frac{(-q^{2},-q^{3};q^5)_\infty }{(q,q^2,q^3,q^4;q^5)_\infty  } \succcurlyeq  \sum_{n=0}^\infty q^n ,\\
 	\sum_{n=0}^\infty   
 	a_{1^{-1}2^{2}5^{-1}}(5n+1)q^n &= \frac{(-q,-q^{4};q^5)_{\infty}}{(q,q^2,q^3,q^4;q^5)_\infty  } \succcurlyeq \sum_{n=0}^\infty q^n ,\\
 	\sum_{n=0}^\infty   
 	a_{1^{-1}2^{2}5^{-1}}(5n+2)q^n& = 0,\\
 	\sum_{n=0}^\infty   
 	a_{1^{-1}2^{2}5^{-1}}(5n+3)q^n &= \frac{f_{10}^2}{f_1f_5 }\succcurlyeq \sum_{n=0}^\infty q^n ,\\
 		\sum_{n=0}^\infty   
 	a_{1^{-1}2^{2}5^{-1}}(5n+4)q^n& = 0.
 \end{align*}
 Theorem \ref{Th-5-1} follows from the above five identities. The proof
  is complete. \qed

      \begin{theorem}\label{Th-5-2}
  	For $n\geq 0$, 
  	\begin{align*}
  & a_{1^{-1}2^{3}4^{-1}5^{-2}}(5n)>0,\
  	 a_{1^{-1}2^{3}4^{-1}5^{-2}}(5n+1)>0,\    a_{1^{-1}2^{3}4^{-1}5^{-2}}(5n+2)<0 ,\\ & a_{1^{-1}2^{3}4^{-1}5^{-2}}(5n+3)=0,\   a_{1^{-1}2^{3}4^{-1}5^{-2}}(5n+4)=0 .
  	\end{align*}
  \end{theorem}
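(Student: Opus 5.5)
The plan is to first simplify the eta-quotient, then 5-dissect a single pentagonal-number series. Since $(-q;-q)_\infty=f_2^3/(f_1f_4)$ (the identity used in the proof of Theorem \ref{Th-1}), we have
\[
\sum_{n=0}^\infty a_{1^{-1}2^{3}4^{-1}5^{-2}}(n)q^n=\frac{(-q;-q)_\infty}{f_5^2}.
\]
Because $1/f_5^2$ is a series in $q^5$, everything reduces to the 5-dissection of $(-q;-q)_\infty$. I would take Euler's 5-dissection
\[
f_1=f(-q^{10},-q^{15})-qf(-q^5,-q^{20})-q^2f_{25},
\]
which follows by splitting $\sum(-1)^nq^{n(3n-1)/2}$ according to $n$ modulo $5$; equivalently, one can use Ramanujan's $f_1=f_{25}(R^{-1}-q-q^2R)$. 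Then replace $q$ by $-q$.

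The pentagonal exponents $n(3n-1)/2$ lie only in the residue classes $0,1,2$ modulo $5$. This immediately gives $a(5n+3)=a(5n+4)=0$. After $q\to-q$ the three surviving components are $f(-q^{10},q^{15})$, $+qf(q^5,-q^{20})$, and $-q^2(-q^{25};-q^{25})_\infty$. Extracting each class and replacing $q^5$ by $q$ gives:
\begin{align*}
\sum_{n\ge0} a(5n)q^n&=\frac{f(-q^2,q^3)}{f_1^2},\qquad
\sum_{n\ge0} a(5n+1)q^n=\frac{f(q,-q^4)}{f_1^2},\\
\sum_{n\ge0} a(5n+2)q^n&=-\frac{f_{2}^3}{f_1^3f_4}.
\end{align*}

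For the class $5n+2$, I would write
\[
\frac{f_2^3}{f_1^3f_4}=\frac{f_2^2}{f_1}\cdot\frac{(q^2;q^4)_\infty}{f_1}\cdot\frac{1}{f_1}.
\]
Here $f_2^2/f_1=\sum q^{n(n+1)/2}$ by \eqref{2-10-1}. The factor $(q^2;q^4)_\infty/f_1$ equals the reciprocal of a product of factors $(1-q^k)$ with $k\not\equiv2\pmod4$, so it has nonnegative coefficients. Finally $1/f_1\succcurlyeq\sum q^n$. Together these give $\preccurlyeq-\sum q^n$, so $a(5n+2)<0$.

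The main obstacle is the classes $5n$ and $5n+1$, since $f(-q^2,q^3)$ and $f(q,-q^4)$ have mixed signs. By the Jacobi triple product with base $-q^5$, I would expand, for instance,
\[
f(-q^2,q^3)=(q^2;-q^5)_\infty(-q^3;-q^5)_\infty(-q^5;-q^5)_\infty,
\]
and then split each factor by the parity of its index. The goal is to write the numerator as a product of factors $(1+q^k)$, a theta series with nonnegative coefficients such as $\psi$, and finitely-structured negative factors $(1-q^{k})$ taken over arithmetic progressions (e.g.\ $k\equiv 2,7\pmod{10}$, together with $(q^{10};q^{20})_\infty$). Each such negative factor is then cancelled against a distinct copy of the factor $1/(1-q^k)$ in one of the two copies of $1/f_1$. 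What remains is a positive product times $1/(1-q)$ (kept uncancelled), hence $\succcurlyeq\sum q^n$. I expect the bookkeeping to work, since $1/f_1^2$ supplies two copies of every factor. If it fails, the fallback is Ramanujan's parametrization $R(-q)$, where the relevant products are $f(-q^2,q^3)=f(-q^2,-q^8)$-type combinations that can be handled the same way.
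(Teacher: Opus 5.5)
Your overall route matches the paper's: rewrite the quotient as $(-q;-q)_\infty/f_5^2$, 5-dissect $(-q;-q)_\infty$, and extract residue classes. The paper does this by substituting \eqref{5-6} into \eqref{5-4}. The gap is that the dissection you actually use is false, and every nonzero class inherits the error.

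\textbf{The dissection fails.} Your identity $f_1=f(-q^{10},-q^{15})-qf(-q^5,-q^{20})-q^2f_{25}$ fails already at $q^5$. The right side is $1-q-q^2+q^6-q^{10}-\cdots$, while $f_1=1-q-q^2+q^5+q^7-\cdots$. It is not equivalent to Ramanujan's form \eqref{5-5}, which written out reads
\[
f_1=\frac{f(-q^{10},-q^{15})}{(q^5,q^{20};q^{25})_\infty}-qf_{25}-q^2\,\frac{f(-q^5,-q^{20})}{(q^{10},q^{15};q^{25})_\infty}.
\]
Splitting the pentagonal series by $n$ modulo $5$ gives five theta series. Only $n\equiv 1$ yields a single product, namely $-qf_{25}$. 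The classes $n\equiv 0,2$ and $n\equiv 3,4$ must be merged using the quintuple product identity, and they become the quotients above.

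\textbf{Consequences.} All three of your displayed generating functions are wrong. They predict $a(5)=2$, $a(6)=3$ and $a(12)=-6$. Direct expansion of $(1+q-q^2-q^5-q^7-q^{12}+\cdots)(1+2q^5+5q^{10}+\cdots)$ gives $a(5)=1$, $a(6)=2$ and $a(12)=-8$. So your positivity argument for the class $5n+2$ establishes a sign for the wrong series, and the classes $5n$, $5n+1$ have no valid formula. Only the vanishing of $a(5n+3)$ and $a(5n+4)$ survives, since it uses only the residues of pentagonal numbers.

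\textbf{How to repair it.} Use \eqref{5-6} together with $(-q^{25};-q^{25})_\infty=f_{50}^3/(f_{25}f_{100})$. This gives
\begin{align*}
\sum_{n\ge0}a(5n)q^n&=\frac{f_{10}^3}{f_5f_{20}f_1^2\,R(-q)},\qquad
\sum_{n\ge0}a(5n+1)q^n=\frac{f_{10}^3}{f_5f_{20}f_1^2},\\
\sum_{n\ge0}a(5n+2)q^n&=-\frac{f_{10}^3\,R(-q)}{f_5f_{20}f_1^2}.
\end{align*}
Here $\frac{f_{10}^3}{f_5f_{20}}=\frac{f_{10}^2}{f_5}\,(q^{10};q^{20})_\infty$, and
\[
R(-q)=\frac{(-q,-q^9;q^{10})_\infty\,(q^4,q^6;q^{10})_\infty}{(q^2,q^8;q^{10})_\infty\,(-q^3,-q^7;q^{10})_\infty}.
\]
With these formulas your cancellation idea goes through. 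First rewrite $1/(1+x)=(1-x)/(1-x^2)$. Then the factors $(1-q^k)$ left in the numerator have:
\begin{itemize}
\item $k\equiv 1,2,8,9 \pmod{10}$ or $k\equiv 10\pmod{20}$ for the class $5n$;
\item $k\equiv 3,4,6,7\pmod{10}$ or $k\equiv 10\pmod{20}$ for the class $5n+2$.
\end{itemize}
In each case these $k$ are distinct, so one copy of $1/f_1$ absorbs them all. The other copy of $1/f_1$ then gives $\succcurlyeq\sum q^n$. The class $5n+1$ is easier still: it only needs $(q^{10};q^{20})_\infty/f_1$ to have nonnegative coefficients.
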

  
  \noindent{\it Proof.} The generating 
   function of  $ a_{1^{-1}2^{3}4^{-1}5^{-2}}(n)$  is
           \begin{align}\label{5-4}
   \sum_{n=0}^\infty   
   a_{1^{-1}2^{3}4^{-1}5^{-2}}(n)q^n = \frac{f_2^3}{f_1f_4f_5^2}.
   \end{align}
 It follows from \cite[(8.1.1), p. 85]{Hirschhorn-1} that 
\begin{align}\label{5-5}
	f_1=f_{25}\left(\frac{1}{R(q^5)}-q-q^2R(q^5)\right),
\end{align}
where 
\[
R(q)=\frac{(q,q^4;q^5)_\infty }{(q^2,q^3;q^5)_\infty }. 
\]
  Replacing $q$ by $-q$ in \eqref{5-5} yields  
\begin{align}\label{5-6}
  \frac{f_2^3}{f_1f_4}= \frac{f_{50}^3}{f_{25}f_{100}}
   \left(\frac{1}{R(-q^5)}+q-q^2R(-q^5)\right).
  \end{align}
  Substituting \eqref{5-6} into \eqref{5-4}, we can prove Theorem
   \ref{Th-5-2}. \qed

   \begin{theorem}\label{Th-5-3}
   	For $n\geq 0$, 
   	\[
   a_{ 2^{1} 5^{-1}}(5n)>0,\
   	a_{ 2^{1} 5^{-1}}(5n+1)=0,\    a_{ 2^{1} 5^{-1}}(5n+2)<0 ,\  a_{ 2^{1} 5^{-1}}(5n+3)=0,\   a_{ 2^{1} 5^{-1}}(5n+4)<0 .
   	\]
   \end{theorem}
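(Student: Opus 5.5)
We would $5$-dissect $f_2$ by means of \eqref{5-5} and then read off each residue class as an infinite product with only nonnegative coefficients, in the same way as in the proof of Theorem \ref{Th-5-2}. By \eqref{1-1},
\[
\sum_{n=0}^\infty a_{2^15^{-1}}(n)q^n=\frac{f_2}{f_5}.
\]
Replacing $q$ by $q^2$ in \eqref{5-5} gives
\[
f_2=f_{50}\left(\frac{1}{R(q^{10})}-q^2-q^4R(q^{10})\right),
\]
so that
\[
\frac{f_2}{f_5}=\frac{f_{50}}{f_5}\cdot\frac{1}{R(q^{10})}-q^2\frac{f_{50}}{f_5}-q^4\frac{f_{50}}{f_5}R(q^{10}).
\]
Since $f_{50}/f_5$ and $R(q^{10})$ are power series in $q^5$, the three terms contribute only to exponents $\equiv 0,2,4 \pmod 5$, respectively. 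Hence $a_{2^15^{-1}}(5n+1)=a_{2^15^{-1}}(5n+3)=0$. Extracting the remaining classes and replacing $q^5$ by $q$, we get
\begin{align*}
\sum_{n=0}^\infty a_{2^15^{-1}}(5n)q^n&=\frac{f_{10}}{f_1}\cdot\frac{(q^4,q^6;q^{10})_\infty}{(q^2,q^8;q^{10})_\infty},\\
\sum_{n=0}^\infty a_{2^15^{-1}}(5n+2)q^n&=-\frac{f_{10}}{f_1},\\
\sum_{n=0}^\infty a_{2^15^{-1}}(5n+4)q^n&=-\frac{f_{10}}{f_1}\cdot\frac{(q^2,q^8;q^{10})_\infty}{(q^4,q^6;q^{10})_\infty}.
\end{align*}

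For the sign analysis we use
\[
\frac{f_{10}}{f_1}=\frac{1}{(q,q^2,q^3,q^4,q^5,q^6,q^7,q^8,q^9;q^{10})_\infty}.
\]
This is a product of reciprocals of factors $1-q^k$, so it has nonnegative coefficients and contains the factor $1/(1-q)$. Hence $f_{10}/f_1\succcurlyeq \sum_{n\ge0}q^n$, which gives $a_{2^15^{-1}}(5n+2)<0$.

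The only point needing care is the numerator theta products $(q^4,q^6;q^{10})_\infty$ and $(q^2,q^8;q^{10})_\infty$ in the classes $5n$ and $5n+4$, because they have negative coefficients. They cancel against the corresponding factors in the denominator of $f_{10}/f_1$:
\begin{align*}
\frac{f_{10}}{f_1}\cdot\frac{(q^4,q^6;q^{10})_\infty}{(q^2,q^8;q^{10})_\infty}&=\frac{1}{(q,q^2,q^3,q^5,q^7,q^8,q^9;q^{10})_\infty\,(q^2,q^8;q^{10})_\infty},\\
\frac{f_{10}}{f_1}\cdot\frac{(q^2,q^8;q^{10})_\infty}{(q^4,q^6;q^{10})_\infty}&=\frac{1}{(q,q^3,q^4,q^5,q^6,q^7,q^9;q^{10})_\infty\,(q^4,q^6;q^{10})_\infty}.
\end{align*}
Each right-hand side is again a product of reciprocals of factors $1-q^k$ that includes $1/(1-q)$, so it is $\succcurlyeq\sum_{n\ge0}q^n$. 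This yields $a_{2^15^{-1}}(5n)>0$ and $a_{2^15^{-1}}(5n+4)<0$, which completes the proof of Theorem \ref{Th-5-3}.
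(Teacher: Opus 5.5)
Your proposal is correct and follows the paper's route. The paper's proof only says the result follows from the $5$-dissection \eqref{5-5} applied to $f_2/f_5$, and you have supplied the omitted details: the substitution $q\to q^2$, the vanishing of the classes $5n+1$ and $5n+3$, and the cancellation of $(q^4,q^6;q^{10})_\infty$ (resp.\ $(q^2,q^8;q^{10})_\infty$) against $f_{10}/f_1$, which leaves products of reciprocals of factors $1-q^k$ that include $1/(1-q)$.
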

   
   \noindent{\it Proof.} The generating function 
     of $ a_{ 2^{1} 5^{-1}}(n)$ is 
        \[
  \sum_{n=0}^\infty   
  a_{ 2^{1} 5^{-1}}(n)q^n = \frac{f_2 }{ f_5 }.
  \]
  Based on \eqref{5-5}
   and the above identity, we can prove Theorem \ref{Th-5-3}.
    \qed 
  
       \begin{theorem}\label{Th-5-4}
    	For $n\geq 0$, 
    	\begin{align*}
    	 a_{1^{2}2^{-1}5^{-2}}(5n)&>0,\
    	 a_{1^{2}2^{-1}5^{-2}}(5n+1)<0,\     a_{1^{2}2^{-1}5^{-2}}(5n+2)=0 ,\\ a_{1^{2}2^{-1}5^{-2}}(5n+3)&=0,\   a_{1^{2}2^{-1}5^{-2}}(5n+4)>0 .
    	\end{align*}
    \end{theorem}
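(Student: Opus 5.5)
The plan is a $5$-dissection of $f_1^2/f_2$, which is the classical theta series $\varphi(-q)=\sum_{n=-\infty}^\infty (-1)^n q^{n^2}$. Since only $f_5^{-2}$ sits in the generating function
\[
\sum_{n=0}^\infty a_{1^{2}2^{-1}5^{-2}}(n)q^n=\frac{f_1^2}{f_2f_5^2}=\frac{1}{f_5^2}\sum_{n=-\infty}^\infty (-1)^n q^{n^2},
\]
the $5$-dissection of the whole quotient reduces to that of $\varphi(-q)$. This is the analogue of the Gauss identity \eqref{2-10} with $q\to -q$.

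\textbf{Step 1: dissect $\varphi(-q)$.} Write $n=5m+r$ with $r\in\{0,\pm1,\pm2\}$. The squares modulo $5$ are only $0,1,4$, which already explains the zero classes $5n+2$ and $5n+3$. Collecting terms, and using $(-1)^{5m+r}=(-1)^{m+r}$, I expect
\[
\sum_{n=-\infty}^\infty (-1)^n q^{n^2}
=\sum_{m}(-1)^m q^{25m^2}-2q\sum_{m}(-1)^m q^{25m^2+10m}+2q^4\sum_{m}(-1)^m q^{25m^2+20m}.
\]
Apply the Jacobi triple product to each sum. The first is $f_{25}^2/f_{50}$. The second is $(q^{15},q^{35},q^{50};q^{50})_\infty$. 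The third is $(q^{5},q^{45},q^{50};q^{50})_\infty$.

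\textbf{Step 2: extract residue classes.} Divide by $f_5^2$, pick out exponents in each residue class modulo $5$, and replace $q^5$ by $q$. This should give
\begin{align*}
\sum_{n=0}^\infty a_{1^{2}2^{-1}5^{-2}}(5n)q^n&=\frac{f_5^2}{f_1^2f_{10}}=\frac{1}{(q,q^2,q^3,q^4;q^5)_\infty^2\, f_{10}},\\
\sum_{n=0}^\infty a_{1^{2}2^{-1}5^{-2}}(5n+1)q^n&=-2\,\frac{(q^{3},q^{7},q^{10};q^{10})_\infty}{f_1^2},\\
\sum_{n=0}^\infty a_{1^{2}2^{-1}5^{-2}}(5n+4)q^n&=2\,\frac{(q,q^{9},q^{10};q^{10})_\infty}{f_1^2},
\end{align*}
and the classes $5n+2$ and $5n+3$ vanish identically.

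\textbf{Step 3: positivity.} The class $5n$ is a product of reciprocals of factors $1-q^k$, including $1/(1-q)$. So it is $\succcurlyeq \sum_{n\ge0}q^n$, which gives strict positivity.

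For the other two classes, one copy of $f_1$ in the denominator contains each factor of the numerator. For instance, $(q^3,q^7,q^{10};q^{10})_\infty/f_1$ equals $1/\prod(1-q^k)$ over $k\not\equiv 0,\pm3\pmod{10}$. Hence each of these classes has the form $\pm 2/(f_1\cdot P)$, where $P$ is a product of factors $(1-q^k)$. These series are coefficientwise at least $\sum q^n$, giving $a(5n+1)<0$ and $a(5n+4)>0$.

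The only delicate step is Step 1: the bookkeeping of signs $(-1)^{m+r}$ and the correct triple-product parametrization, namely which of $q^{15},q^{35}$ versus $q^5,q^{45}$ goes with which class. The remaining steps are routine.
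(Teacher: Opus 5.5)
Your proposal is correct and follows essentially the same route as the paper. The paper quotes the $5$-dissection $f_1^2/f_2=f_{25}^2/f_{50}-2qf(-q^{15},-q^{35})+2q^4f(-q^5,-q^{45})$ from Berndt, divides by $f_5^2$, and leaves the extraction and positivity to the reader; you instead derive that same dissection directly from $\varphi(-q)=\sum_n(-1)^nq^{n^2}$ and correctly carry out the residue-class extraction and the coefficientwise comparisons.
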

    
    \noindent{\it Proof.}
     The generating function of $a_{1^{2}2^{-1}5^{-2}}(n)$
      is 
           \begin{align}\label{5-7}
  \sum_{n=0}^\infty   
  a_{1^{2}2^{-1}5^{-2}}(n)q^n = \frac{f_1^2}{f_2f_5^2}.
  \end{align}
  It follows from \cite[Corollary (ii),  p. 49]{Berndt}
   that 
   \[
   \frac{f_1^2}{f_2}=\frac{f_{25}^2}{f_{50}}
   -2qf(-q^{15},-q^{35})+2q^4f(-q^5,-q^{45}). 
   \]
Utilizing  \eqref{5-7}
  and the above identity, we can prove Theorem \ref{Th-5-4}.
  \qed

%
%
  
  	\section{Sign changes with period 6}
  	
  	The objective of this section is to 
  	 confirm several conjectures
  	  of    Bringmann, Han, Heim and Kane
  	  \cite{Bringmann-1} on sign changes with period 6.
  	
   \begin{theorem}\label{Th-6-1}
  	Define 
  	\begin{align*}
  		S_8:=&\{( 1,-4,-3,4),
  		(3,-3,-1,3)\}.
  	\end{align*}
  	If $(t_1,t_2,t_3,t_4)\in S_8$, then for $n\geq 0$,
  	\begin{align*}
  		a_{1^{t_1}2^{t_2}3^{t_3}4^{t_4}}(2n)& >0,\quad 
  		a_{1^{t_1}2^{t_2}3^{t_3}4^{t_4}}(6n+1) <0,\\
  		a_{1^{t_1}2^{t_2}3^{t_3}4^{t_4}}(6n+3)& <0,\quad
  			a_{1^{t_1}2^{t_2}3^{t_3}4^{t_4}}(6n+5)=0.
  	\end{align*}
  \end{theorem}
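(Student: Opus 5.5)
We pass to $q\mapsto -q$, as in the proof of Theorem~\ref{Th-1}. The statement is equivalent to the claim that $\sum_{n\ge0}(-1)^n a_{1^{t_1}2^{t_2}3^{t_3}4^{t_4}}(n)q^n$ has positive coefficients in every class except $n\equiv 5\pmod 6$, where the coefficients vanish. Indeed, $(-1)^{2n}=1$ and $(-1)^{6n+1}=(-1)^{6n+3}=-1$, so the required signs $>0,<0,<0$ all turn into $>0$.

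To change variables I use
\[
f_1(-q)=\frac{f_2^3}{f_1f_4},\qquad f_3(-q)=\frac{f_6^3}{f_3f_{12}}.
\]
I also write $\psi(q)=\sum_{n\ge0}q^{n(n+1)/2}=f_2^2/f_1$ as in \eqref{2-10-1}, so that $\psi(-q)=f_1f_4/f_2$. For $(3,-3,-1,3)$ the generating function is
\[
\frac{f_1^3f_4^3}{f_2^3f_3}=\frac{\psi(-q)^3}{f_3}.
\]
After $q\to-q$ it becomes $G(q):=\psi(q)^3\,\frac{f_3f_{12}}{f_6^3}$. For $(1,-4,-3,4)$ the generating function is $f_1f_4^4/(f_2^4f_3^3)=\psi(-q)^4/(f_1f_3)^3$. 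After $q\to-q$ it becomes $\psi(q)^4$ times an explicit eta-quotient built from $f_1^3f_4^3/f_2^9$ and $f_3^3f_{12}^3/f_6^9$. I will simplify that factor into products of $\psi$-values at $q^2,q^6$, factors $1/f_k$ and $(q^k;q^{2k})_\infty^{-1}$, all of which have nonnegative coefficients.

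The key tool is the $3$-dissection
\[
\psi(q)=P(q^3)+q\,\psi(q^9),\qquad P(q)=\frac{f_2f_3^2}{f_1f_6}=\sum_{n=-\infty}^\infty q^{n(3n+1)/2},
\]
which is \eqref{3-15} together with \eqref{2-6}. Cubing gives $\psi^3=P^3+3qP^2\psi_9+3q^2P\psi_9^2+q^3\psi_9^3$, where arguments $q^3$ and $q^9$ are understood. Since $f_3f_{12}/f_6^3$ is a series in $q^3$, the part of $G$ with exponents $\equiv 2\pmod 3$ is $3q^2P(q^3)\psi(q^9)^2 f_3f_{12}/f_6^3$. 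Extracting it and replacing $q^3$ by $q$ gives
\[
\sum_{n\ge0}(-1)^n a(3n+2)q^n=3P(q)\psi(q^3)^2\frac{f_1f_4}{f_2^3}=3\frac{f_4f_6^3}{f_2^2}.
\]
This is an even function of $q$, which yields the zeros at $6n+5$. Halving the exponent again gives $3f_2f_3^3/f_1^2=3c_3(n)$-type data divided by $(q;q^2)_\infty f_1$, which is $\succcurlyeq\sum q^n$ by \eqref{2-11}; this handles $6n+2$.

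The classes $n\equiv 0,1\pmod 3$ come from $(P(q^3)^3+q^3\psi(q^9)^3)f_3f_{12}/f_6^3$ and $3qP(q^3)^2\psi(q^9)f_3f_{12}/f_6^3$ respectively. After $q^3\to q$ these are
\[
\bigl(P(q)^3+q\psi(q^3)^3\bigr)\frac{f_1f_4}{f_2^3},\qquad 3P(q)^2\psi(q^3)\frac{f_1f_4}{f_2^3}.
\]
Each is then split by parity; the odd parts correspond to the classes $6n+3$ and $6n+1$. The difficulty is the factor $f_1f_4/f_2^3=\psi(-q)/f_2^2$, which is not coefficientwise nonnegative. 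I plan to absorb it. For the second series,
\[
P^2\psi(q^3)\frac{f_1f_4}{f_2^3}=\frac{f_3^3f_4f_6}{f_1f_2^2}\cdot\frac{1}{1},
\]
which reduces to $c_3$-series times products of $1/f_k$. For the first, I will combine $P^3f_1f_4/f_2^3$ with the term $q\psi(q^3)^3f_1f_4/f_2^3$. If necessary I will 2-dissect using \eqref{4-3}, \eqref{4-12} and \eqref{4-40}, and finish with the fact recorded before Theorem~\ref{Th-1}: a nonnegative series $\succcurlyeq 1+\dots+q^{M-1}$ divided by $1-q^M$ is $\succcurlyeq\sum q^n$.

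The case $(1,-4,-3,4)$ follows the same route with $\psi^4$ in place of $\psi^3$. Here the $q^{\equiv2}$ part is $6P^2\psi_9^2+4q^{-1}\cdot(\text{term }q^3P\psi_9^3)$. I expect the vanishing at $6n+5$ again to reduce to the resulting eta-quotient being even in $q$. The main obstacle in both cases is the positivity of the residue-$0$ series in the first display, where a sum of two eta-quotients carries the non-positive factor $f_1f_4/f_2^3$. My first attempt is to rewrite it as a single eta-quotient via a known identity; the identity $P(q)^3+q\psi(q^3)^3=\psi(q)\cdot(\dots)$, obtained from Hirschhorn's cubic identities akin to \eqref{3-8}, is what I would look for.
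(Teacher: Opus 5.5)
\textbf{There is a genuine gap: part of the $(3,-3,-1,3)$ case and all of the $(1,-4,-3,4)$ case are unproved.}

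\textbf{Case $(3,-3,-1,3)$.} Your reduction is sound as far as it goes. The residue-$2$ series $3f_4f_6^3/f_2^2$ is correct, and it gives $a(6n+5)=0$ and $a(6n+2)>0$. The residue-$1$ series is actually $3P(q)^2\psi(q^3)f_1f_4/f_2^3=3f_3^3f_4/(f_1f_2)$, not $3f_3^3f_4f_6/(f_1f_2^2)$. It is still $\succcurlyeq\sum_{n\ge0}q^n$, and it settles $6n+1$ (even part) and $6n+4$ (odd part). However, the residue-$0$ series $\bigl(P(q)^3+q\psi(q^3)^3\bigr)f_1f_4/f_2^3$ is never shown to have positive coefficients. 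This series is exactly where $a(6n)>0$ and $a(6n+3)<0$ live, and you only describe the identity you would look for. That is the central step, and it is missing. The identity does exist: $P(q)^3+q\psi(q^3)^3=\psi(q)^4/\psi(q^3)$, so the series equals $\frac{\psi(q)^3}{\psi(q^3)}\cdot\frac{f_4}{f_2}$. But proving this identity, and then the positivity of that quotient, is real work that the proposal does not contain.

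\textbf{Case $(1,-4,-3,4)$.} Here the ``same route'' does not apply. After $q\to-q$ you have
$\psi(q)^4\cdot\frac{f_1^3f_4^3}{f_2^9}\cdot\frac{f_3^3f_{12}^3}{f_6^9}=\frac{f_4^3}{f_1f_2}\cdot\frac{f_3^3f_{12}^3}{f_6^9}$.
The cofactor of $\psi(q)^4$ is not a series in $q^3$, so a $3$-dissection of $\psi^4$ tells you nothing about the residue classes of the product; you would need a $3$-dissection of $f_4^3/(f_1f_2)$. Nor can that cofactor be written as a product of nonnegative series, since its coefficient of $q$ is $-3$ (from $f_1^3$). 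Also, the residue-$2$ part of $\psi^4$ is just $6q^2P(q^3)^2\psi(q^9)^2$; the term $4q^3P(q^3)\psi(q^9)^3$ lies in residue $0$.

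\textbf{How the paper avoids this.} Reverse the order of dissection and $2$-dissect the original quotient first, using \eqref{6-2} for $(1,-4,-3,4)$ and \eqref{4-63} for $(3,-3,-1,3)$.
The even part is then directly a nonnegative product, $\frac{f_2^6f_6^2}{f_1^3f_3^7}$ resp.\ $\frac{\psi(q)^3}{f_6}$. This handles every $a(2n)$, including your missing class $a(6n)$.
The odd part is $-\psi(q)\frac{f_6^6}{f_3^9}$ resp.\ $-3\psi(q)\frac{f_6^3}{f_3^2}$, a negative multiple of $\psi(q)$ times a series in $q^3$. One application of \eqref{3-15} splits it into two manifestly nonpositive pieces (classes $6n+1$ and $6n+3$) and a zero piece (class $6n+5$).
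With this order you never have to $3$-dissect $\psi^3$ or $\psi^4$, and the sign-indefinite factor $f_1f_4/f_2^3$ never appears.
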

  
  \noindent{\it Proof}.  By \eqref{1-1},
   \begin{align}
   	\sum_{n=0}^\infty a_{1^12^{-4}3^{-3}4^4}(n)
   	 q^n=\frac{f_1f_4^4}{f_2^4f_3^3}. \label{6-1}
   \end{align}
   Xia and Yao \cite[Lemma 2.5]{Xia-Yao}
    proved that 
    \begin{align}
    	\frac{f_1}{f_3^3}=
    	\frac{f_2f_4^2f_{12}^2
    	 }{f_6^7}-q\frac{f_2^3f_{12}^6}{
    	 f_4^2f_6^9}. \label{6-2}
    \end{align}
 Substituting \eqref{6-2} into \eqref{6-1}, we can 
  prove that 
      \begin{align}
   	\sum_{n=0}^\infty a_{1^12^{-4}3^{-3}4^4}(2n)
   	q^n=\frac{f_2^6f_6^2}{f_1^3f_3^7}\succcurlyeq
   	\sum_{n=0}^\infty q^n , \label{6-3}
   \end{align}
   and 
        \begin{align}
   	\sum_{n=0}^\infty a_{1^12^{-4}3^{-3}4^4}(2n+1)
   	q^n=-\frac{f_2^2f_6^6}{f_1f_3^9}. \label{6-4}
   \end{align}
   Substituting \eqref{3-15}
    into \eqref{6-4}, we have 
         \begin{align*}
    	\sum_{n=0}^\infty a_{1^12^{-4}3^{-3}4^4}(2n+1)
    	q^n=-\frac{f_6^7f_9^2}{
    		f_3^{10} f_{18}}- q\frac{f_6^6 f_{18}^2}{f_3^9 f_{9}} ,
    \end{align*}
which implies that 
      \begin{align}
	\sum_{n=0}^\infty a_{1^12^{-4}3^{-3}4^4}(6n+1)
	q^n&=-\frac{f_2^7f_3^2}{
		f_1^{10} f_{6}}\preccurlyeq -\sum_{n=0}^\infty q^n  , \label{6-5}\\
		\sum_{n=0}^\infty a_{1^12^{-4}3^{-3}4^4}(6n+3)
		q^n&=-\frac{f_2^6f_6^2}{f_1^9f_3}  \preccurlyeq -\sum_{n=0}^\infty q^n,\label{6-6}\\
		\sum_{n=0}^\infty a_{1^12^{-4}3^{-3}4^4}(6n+5)
		q^n&=0 .\label{6-7}
\end{align}
    It follows from \eqref{6-3} and 
    \eqref{6-5}--\eqref{6-7} that Theorem \ref{Th-6-1}
     is true when $(t_1,t_2,t_3,t_4)=(1,-4,-3,4)$.
     
     The generating function of $a_{1^32^{-3}3^{-1}4^3}(n)$
      is 
       \begin{align}
     	\sum_{n=0}^\infty a_{1^32^{-3}3^{-1}4^3}(n)
     	q^n=\frac{f_1^3 f_4^3}{f_2^3f_3}. \label{6-8}
     \end{align}
     Using  \eqref{3-15}, \eqref{4-63} and \eqref{6-8}
      and the same method 
       in the proof of   \eqref{6-3}
        and \eqref{6-5}--\eqref{6-7}, we can prove that 
       Theorem  \ref{Th-6-1}
       is true when $(t_1,t_2,t_3,t_4)=(3,-3,-1,3)$.
        This completes the proof of Theorem \ref{Th-6-1}.
         \qed 
       
     \begin{theorem}\label{Th-6-2}
  	For $n\geq 0$, 
  	\[
  	a_{1^{1}2^{-3}3^{-3} 4^{2}}(2n)>0,\
  	a_{1^{1}2^{-3}3^{-3} 4^{2}}(6n+1)<0,\     a_{1^{1}2^{-3}3^{-3} 4^{2}}(6n+3)=0 ,\  a_{1^{1}2^{-3}3^{-3} 4^{2}}(6n+5)=0 .
  	\]
  \end{theorem}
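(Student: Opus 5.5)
The plan is to follow the proof of Theorem \ref{Th-6-1} for $(1,-4,-3,4)$, using the $2$-dissection \eqref{6-2} of $f_1/f_3^3$ due to Xia and Yao. By \eqref{1-1},
\[
\sum_{n=0}^\infty a_{1^{1}2^{-3}3^{-3}4^{2}}(n)q^n=\frac{f_4^2}{f_2^3}\cdot\frac{f_1}{f_3^3}.
\]
Substituting \eqref{6-2}, I expect the odd part to simplify completely, because the factor $f_4^2/f_2^3$ cancels the corresponding factors in the second term of \eqref{6-2}:
\[
\sum_{n=0}^\infty a_{1^{1}2^{-3}3^{-3}4^{2}}(n)q^n=\frac{f_4^4f_{12}^2}{f_2^2f_6^7}-q\frac{f_{12}^6}{f_6^9}.
\]

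For the even terms, I will extract the even powers and replace $q^2$ by $q$. This should give
\[
\sum_{n=0}^\infty a_{1^{1}2^{-3}3^{-3}4^{2}}(2n)q^n=\frac{f_2^4f_6^2}{f_1^2f_3^7}
=\left(\sum_{n=0}^\infty q^{n(n+1)/2}\right)^2\frac{1}{(q^3;q^6)_\infty^2f_3^5},
\]
where the second form uses \eqref{2-10-1} and $f_6/f_3=1/(q^3;q^6)_\infty$. Every factor has nonnegative coefficients. The square of the triangular-number series is $\succcurlyeq 1+2q+q^2$, and $1/f_3\succcurlyeq 1/(1-q^3)$. The fact stated before Theorem \ref{Th-1} with $M=3$ then gives $\succcurlyeq\sum_{n\ge0}q^n$, so $a(2n)>0$.

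For the odd terms, dividing by $q$ and replacing $q^2$ by $q$ gives
\[
\sum_{n=0}^\infty a_{1^{1}2^{-3}3^{-3}4^{2}}(2n+1)q^n=-\frac{f_6^6}{f_3^9}.
\]
This is a series in $q^3$ alone. Hence $a(6n+3)=a(6n+5)=0$, since these correspond to exponents $3n+1$ and $3n+2$. For the remaining class,
\[
\sum_{n=0}^\infty a_{1^{1}2^{-3}3^{-3}4^{2}}(6n+1)q^n=-\frac{f_2^6}{f_1^9}=-\left(\sum_{n=0}^\infty q^{n(n+1)/2}\right)^3\frac{1}{f_1^6},
\]
which is $\preccurlyeq-\sum_{n\ge0}q^n$ because $1/f_1\succcurlyeq 1/(1-q)$.

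The only real obstacle is checking the algebra of the substitution into \eqref{6-2}, in particular the cancellation that makes the odd part a function of $q^3$. The positivity steps are routine once that is confirmed.
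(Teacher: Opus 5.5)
Your proposal is correct and is essentially the paper's proof: the paper substitutes \eqref{6-2} into \eqref{6-9}, obtains exactly your displayed dissection $\frac{f_4^4f_{12}^2}{f_2^2f_6^7}-q\frac{f_{12}^6}{f_6^9}$, and then states that the theorem follows. You supply the details the paper omits: the even part $\frac{f_2^4f_6^2}{f_1^2f_3^7}$, and the odd part $-\frac{f_6^6}{f_3^9}$, which lies in $q^3$ and gives $-\frac{f_2^6}{f_1^9}$ on the class $6n+1$. Your positivity arguments for both are valid.
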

  
  \noindent{\it Proof.}
   The generating function 
    of  $a_{1^{1}2^{-3}3^{-3} 4^{2}}(n)$ is 
        \begin{align}\label{6-9}
  \sum_{n=0}^\infty   
  a_{1^{1}2^{-3}3^{-3} 4^{2}}(n)q^n = \frac{f_1 f_4^2}{ f_2^3f_3^3}.
  \end{align}
   Substituting \eqref{6-2} into \eqref{6-9}, we get
         \[
  \sum_{n=0}^\infty   
  a_{1^{1}2^{-3}3^{-3} 4^{2}}(n)q^n = \frac{f_4^4f_{12}^2
   }{ f_2^2f_6^7}-q\frac{f_{12}^6}{f_6^9},
  \]
  from which, we can prove Theorem \ref{Th-6-2}.
   \qed

     \begin{theorem}\label{Th-6-3}
  	For $n\geq 0$, 
  	\begin{align*}
   a_{1^{1}2^{-2}3^{-3} }(3n)&>0,\
   a_{1^{1}2^{-2}3^{-3} }(6n+1)<0,\    a_{1^{1}2^{-2}3^{-3} }(6n+2)>0,\\
     a_{1^{1}2^{-2}3^{-3} }(6n+4)&=0 ,\   a_{1^{1}2^{-2}3^{-3} }(6n+5)<0 .
  	\end{align*}
  \end{theorem}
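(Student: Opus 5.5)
The plan is to 2-dissect using \eqref{6-2}, then 3-dissect each half, as in the proof of Theorem \ref{Th-6-1}. The target is
\[
\sum_{n=0}^\infty a_{1^12^{-2}3^{-3}}(n)q^n=\frac{1}{f_2^2}\cdot\frac{f_1}{f_3^3}.
\]
Dividing \eqref{6-2} by $f_2^2$ and extracting even and odd parts gives
\begin{align*}
\sum_{n=0}^\infty a_{1^12^{-2}3^{-3}}(2n)q^n&=\frac{f_2^2f_6^2}{f_1f_3^7},\\
\sum_{n=0}^\infty a_{1^12^{-2}3^{-3}}(2n+1)q^n&=-\frac{f_1f_6^6}{f_2^2f_3^9}.
\end{align*}
The residues $6n,6n+2,6n+4$ then come from a 3-dissection of the first series, and $6n+1,6n+3,6n+5$ from a 3-dissection of the second.

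\emph{Even part.} I insert \eqref{3-15} for $f_2^2/f_1$:
\[
\frac{f_2^2f_6^2}{f_1f_3^7}=\frac{f_6^3f_9^2}{f_3^8f_{18}}+q\,\frac{f_6^2f_{18}^2}{f_3^7f_9}.
\]
There is no $q^{3n+2}$ term, so $a(6n+4)=0$. This also gives
\[
\sum_{n=0}^\infty a_{1^12^{-2}3^{-3}}(6n)q^n=\frac{f_2^3f_3^2}{f_1^8f_6},
\qquad
\sum_{n=0}^\infty a_{1^12^{-2}3^{-3}}(6n+2)q^n=\frac{f_2^2f_6^2}{f_1^7f_3}.
\]
For $6n$ I factor
\[
\frac{f_2^3f_3^2}{f_1^8f_6}=\frac{f_2f_3^2}{f_1f_6}\cdot\frac{f_2^2}{f_1}\cdot\frac{1}{f_1^6}.
\]
By \eqref{2-6} and \eqref{2-10-1} the first two factors have nonnegative coefficients and constant term $1$. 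The last factor is $\succcurlyeq\sum_{n\ge0} q^n$, so the product is $\succcurlyeq\sum_{n\ge0} q^n$. For $6n+2$ I write
\[
\frac{f_2^2f_6^2}{f_1^7f_3}=\frac{f_2^2}{f_1}\cdot\frac{f_6^2}{f_3}\cdot\frac{1}{f_1^6},
\]
which is positive in the same way, using \eqref{2-10-1} twice. This settles the even residues of $3n$ and the classes $6n+2$ and $6n+4$.

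\emph{Odd part.} The remaining claims are $a(6n+1)<0$, $a(6n+3)>0$ and $a(6n+5)<0$. I need a 3-dissection of $f_1/f_2^2$; then the factor $f_6^6/f_3^9$ just rides along. This is the step I expect to be the main obstacle, because the paper so far has no identity of this exact shape. My first attempt is to write
\[
\frac{f_1}{f_2^2}=\frac{f_1f_4}{f_2}\cdot\frac{1}{f_2f_4}.
\]
The first factor is 3-dissected by \eqref{3-16}. For the second, I take the 3-dissection of $1/(f_1f_2)$ obtained from \eqref{3-20}, via $1/(f_1f_2)=\prod_{\omega}(\cdots)$ with $\omega$ running over cube roots of unity (equivalently, from the known dissection $1/(f_1f_2)$ in Hirschhorn's book), and apply $q\to q^2$.

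Multiplying out, collecting by exponent mod $3$, and simplifying each component should give a single eta-quotient per class, or a sum with coefficients of one sign. The expected shape is: the class-$0$ component (giving $6n+1$) and the class-$2$ component (giving $6n+5$) carry a minus sign, and the class-$1$ component (giving $6n+3$) carries a plus sign. If the product of the two dissections produces several mixed terms, I will instead replace $q$ by $-q$ first. Using $(-q;-q)_\infty=f_2^3/(f_1f_4)$, $f_1/f_2^2$ becomes $f_2/(f_1f_4)$, for which a 3-dissection can be read off from \eqref{3-15} combined with the dissection of $1/f_4$. I would then track the signs $(-1)^n$ through each residue class.

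Finally, each resulting component must be shown $\succcurlyeq\sum q^n$ or $\preccurlyeq-\sum q^n$. I do this by factoring it into theta series \eqref{2-5}, \eqref{2-6}, \eqref{2-10-1}, $t$-core generating functions \eqref{2-11}, and a factor $1/f_1$ or $1/(q,q^2;q^3)_\infty$. The fact stated at the start of Section 2 then finishes each case.
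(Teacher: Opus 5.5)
\textbf{There is a genuine gap in the odd half.}

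Your even half is correct. The 2-dissection from \eqref{6-2}, the 3-dissection of $\frac{f_2^2f_6^2}{f_1f_3^7}$ via \eqref{3-15}, the vanishing of $a(6n+4)$, and the positivity of $\frac{f_2^3f_3^2}{f_1^8f_6}$ and $\frac{f_2^2f_6^2}{f_1^7f_3}$ all check out.

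The odd half carries $a(6n+1)<0$, $a(6n+5)<0$, and the part $a(6n+3)>0$ of the claim $a(3n)>0$. You leave it at ``should give a single eta-quotient per class,'' and that is not a routine simplification. The 3-dissection of $1/(f_1f_2)$ coming from \eqref{3-20} already has both signs inside one class: its class-$0$ part is $\frac{f_9^9}{f_3^6f_6^2f_{18}^3}-2q^3\frac{f_{18}^6}{f_3^3f_6^5}$. So after $q\to q^2$ and multiplication by the two-term dissection \eqref{3-16}, every residue class of $f_1/f_2^2$ is a sum of three or four eta-quotient products of mixed sign. Nothing in your proposal shows these sums collapse or have a definite sign. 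Showing that they collapse amounts to proving the Hirschhorn--Sellers 3-dissection \eqref{6-11}, which is the identity the paper quotes. The $q\to-q$ fallback does not escape this. Since $\frac{f_2}{f_1f_4}=\frac{f_2^2}{f_1}\cdot\frac{1}{f_2f_4}$, it needs $1/(f_2f_4)$, not just $1/f_4$, and produces the same kind of mixed product.

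Here is a direct way to supply the missing step. Write $f_1/f_2^2=1/\psi(q)$ with $\psi(q)=f_2^2/f_1$. By \eqref{3-15}, $\psi(q)=X+qY$ with $X=\frac{f_6f_9^2}{f_3f_{18}}$ and $Y=\frac{f_{18}^2}{f_9}$, so
\[
\frac{f_1}{f_2^2}=\frac{X^2-qXY+q^2Y^2}{X^3+q^3Y^3}.
\]
Everything then hinges on the single identity $X^3+q^3Y^3=\frac{f_6^8f_9}{f_3^4f_{18}^2}$. Given \eqref{3-15}, this is equivalent to \eqref{6-11}, and you must prove or cite it. Once it is available, your odd part $-\frac{f_6^6}{f_3^9}\cdot\frac{f_1}{f_2^2}$ gives
\[
\sum_{n\ge0}a(6n+1)q^n=-\frac{f_3^3}{f_1^7},\qquad \sum_{n\ge0}a(6n+3)q^n=\frac{f_6^3}{f_1^6f_2},\qquad \sum_{n\ge0}a(6n+5)q^n=-\frac{f_6^6}{f_1^5f_2^2f_3^3}.
\]
Each of these is sign-definite by \eqref{2-11}, matching the shape you predicted.

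Your order (2-dissect, then 3-dissect) is then a valid alternative to the paper's. The paper applies \eqref{6-11} to $\frac{1}{f_3^3}\cdot\frac{f_1}{f_2^2}$ at the outset and reads off parity afterwards: directly for $3n+1$, and via $q\to-q$ for $3n+2$. It therefore needs no identity besides \eqref{6-11}.
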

  
 \begin{remark}
 	 Theorem \ref{Th-6-3}
 	 was first proved by  
 	 Kane and Sharma \cite{Kane-Sharma}
 	 using the Hardy-Ramanujan-Rademacher circle method.
 \end{remark}  
  
  \noindent{\it Proof.}
   By \eqref{1-1}, 
       \begin{align}\label{6-10}
  \sum_{n=0}^\infty   
  a_{1^{1}2^{-2}3^{-3} }(n)q^n = \frac{f_1 }{ f_2^2f_3^3}.
  \end{align}
  Hirschhorn and Sellers \cite{Hirschhorn-Sellers} proved 
   that 
       \begin{align}\label{6-11}
 \frac{f_1}{f_2^2}=\frac{f_3^2f_9^3}{
  f_6^6}-q\frac{f_3^3f_{18}^3}{f_6^7}+q^2\frac{f_3^4f_{18}^6
   }{f_6^8f_9^3}.
   \end{align}
  Thanks to \eqref{6-10} and \eqref{6-11},
   we can prove that 
    \begin{align} 
 \sum_{n=0}^\infty   
 a_{1^{1}2^{-2}3^{-3} }(3n)q^n &= \frac{f_3^3 }{f_1 f_2^6 }\succcurlyeq \sum_{n=0}^\infty q^n
, \label{6-12}\\
 \sum_{n=0}^\infty   
 a_{1^{1}2^{-2}3^{-3} }(6n+1)q^n &=- \frac{f_3^3 }{ f_1^7 }
  \preccurlyeq -\sum_{n=0}^\infty q^n,  \label{6-13}\\
 \sum_{n=0}^\infty    a_{1^{1}2^{-2}3^{-3} }(6n+4)q^n &=0,  \label{6-13-1} 
\end{align}
and 
  \begin{align} 
 \sum_{n=0}^\infty   
 a_{1^{1}2^{-2}3^{-3} }(3n+2)q^n = \frac{f_1f_6^6 }{f_2^8f_3^3 }.  \label{6-14}
 \end{align}
 Replacing $q$ by $-q$ in
 \eqref{6-14} yields 
  \begin{align} 
 \sum_{n=0}^\infty  (-1)^n  
 a_{1^{1}2^{-2}3^{-3} }(3n+2)q^n = \frac{ f_3^3f_{12}^3 }{f_1f_2^5f_4f_6^3 }\succcurlyeq \sum_{n=0}^\infty q^n. \label{6-15}
 \end{align}
Theorem \ref{Th-6-3} follows from
\eqref{6-12}, \eqref{6-13}, \eqref{6-13-1} and \eqref{6-15}. 
 This completes the proof. \qed

			            	\section{Sign changes with period 8}
			          
	     \begin{theorem}\label{Th-7-1}
		For $n\geq 0$, 
		\begin{align*}
		a_{1^{4}  4^{-5}}(8n)&>0,\
		a_{1^{4}  4^{-5}}(8n+2)>0,\ a_{1^{4}  4^{-5}}(4n+3)>0,\\
		a_{1^{4}  4^{-5}}(4n+1)&<0,\  
		  a_{1^{4}  4^{-5}}(8n+4)=0,\  a_{1^{4}  4^{-5}}(8n+6)=0  .
		\end{align*}
	\end{theorem}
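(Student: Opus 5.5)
The plan is to use the $2$-dissection \eqref{4-68} of $f_1^4$ and then dissect each piece once more. By \eqref{1-1},
\[
\sum_{n=0}^\infty a_{1^44^{-5}}(n)q^n=\frac{f_1^4}{f_4^5}.
\]
Substituting \eqref{4-68} gives
\[
\sum_{n=0}^\infty a_{1^44^{-5}}(n)q^n=\frac{f_4^5}{f_2^2f_8^4}-4q\frac{f_2^2f_8^4}{f_4^7}.
\]
Extracting the even and odd parts and replacing $q^2$ by $q$, we get
\[
\sum_{n=0}^\infty a_{1^44^{-5}}(2n)q^n=\frac{f_2^5}{f_1^2f_4^4}
=\left(\sum_{n=-\infty}^\infty q^{n^2}\right)\frac{1}{f_4^2}
\qquad\text{(by \eqref{2-10})}
\]
and
\[
\sum_{n=0}^\infty a_{1^44^{-5}}(2n+1)q^n=-4\frac{f_1^2f_4^4}{f_2^7}.
\]

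For the even part, the factor $1/f_4^2$ is a series in $q^4$, and $n^2\equiv 0,1\pmod 4$. So the coefficients of $q^{4m+2}$ and $q^{4m+3}$ vanish, which gives $a_{1^44^{-5}}(8n+4)=a_{1^44^{-5}}(8n+6)=0$. Split $\sum_n q^{n^2}$ by the parity of $n$:
\[
\sum_{n=-\infty}^\infty q^{n^2}=\sum_{n=-\infty}^\infty q^{4n^2}+2q\sum_{n=0}^\infty q^{4n(n+1)} .
\]
This yields
\[
\sum_{n=0}^\infty a_{1^44^{-5}}(8n)q^n=\frac{1}{f_1^2}\sum_{n=-\infty}^\infty q^{n^2},\qquad
\sum_{n=0}^\infty a_{1^44^{-5}}(8n+2)q^n=\frac{2}{f_1^2}\sum_{n=0}^\infty q^{n(n+1)} .
\]
Both right-hand sides are products of series with nonnegative coefficients and contain a factor $1/f_1$. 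Each is therefore $\succcurlyeq \sum_{n\ge0}q^n$ (using the fact stated before Theorem \ref{Th-1} with $M=1$). This proves positivity for $8n$ and $8n+2$.

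For the odd part, replace $q$ by $-q$ and use $(-q;-q)_\infty=f_2^3/(f_1f_4)$, so that $f_1^2\mapsto f_2^6/(f_1^2f_4^2)$ while $f_2,f_4$ are unchanged. This gives
\[
\sum_{n=0}^\infty(-1)^n a_{1^44^{-5}}(2n+1)q^n=-4\frac{f_4^2}{f_1^2f_2}
=-\frac{4}{f_1^2}\sum_{n=0}^\infty q^{n(n+1)}\preccurlyeq-\sum_{n=0}^\infty q^n,
\]
where \eqref{2-10-1} with $q\to q^2$ identifies $f_4^2/f_2$. Reading off even $n$ gives $a_{1^44^{-5}}(4n+1)<0$, and odd $n$ gives $a_{1^44^{-5}}(4n+3)>0$.

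The only step needing care is the second dissection of the even part. There one must observe that the $q^4$-periodic factor $1/f_4^2$ leaves the residues of $n^2$ modulo $4$ intact, so the zero classes $8n+4$ and $8n+6$ come out exactly. Everything else is routine bookkeeping with \eqref{4-68}, \eqref{2-10} and \eqref{2-10-1}.
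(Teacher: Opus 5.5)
Your proof is correct, and for the even-indexed coefficients it matches the paper's argument. The paper substitutes the $2$-dissection \eqref{m-1} of $1/f_1^2$ into $f_2^5/(f_1^2f_4^4)$. That identity is exactly what you get by writing $1/f_1^2=\frac{f_4^2}{f_2^5}\sum_n q^{n^2}$ and splitting $\sum_n q^{n^2}$ by the parity of $n$, which is what you do. Both routes arrive at $f_2^5/(f_1^4f_4^2)$ and $2f_4^2/(f_1^2f_2)$ for the classes $8n$ and $8n+2$. Your remark that $1/f_4^2$ is a series in $q^4$ while $n^2\equiv 0,1\pmod 4$ gives the zero classes $8n+4$ and $8n+6$ slightly more transparently than the paper does.

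The genuine difference is in the odd part. The paper substitutes \eqref{4-40} into $-4f_1^2f_4^4/f_2^7$ and obtains separate generating functions $-4f_2^2f_4^5/(f_1^6f_8^2)$ for $a_{1^44^{-5}}(4n+1)$ and $8f_2^4f_8^2/(f_1^6f_4)$ for $a_{1^44^{-5}}(4n+3)$. You instead replace $q$ by $-q$ and read off both signs at once from the single series $-4f_4^2/(f_1^2f_2)$, the same device the paper uses in Theorem \ref{Th-4-8}. Your route is shorter and needs no second dissection of the odd part. The paper's route produces explicit product formulas for each residue class modulo $4$, in line with its stated aim. Dissecting your $-4f_4^2/(f_1^2f_2)$ with \eqref{m-1} recovers exactly those two formulas.
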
		          
			          
\noindent{\it Proof.}
 Thanks to \eqref{1-1}, 
			            \begin{align}\label{7-1}
	\sum_{n=0}^\infty   
	a_{1^{4}  4^{-5}}(n)q^n = \frac{f_1^4 }{  f_4^5 }.
	\end{align}
	Substituting \eqref{4-68} into \eqref{7-1} yields 
\[
	\sum_{n=0}^\infty   
	a_{1^{4}  4^{-5}}(n)q^n = \frac{f_4^5}{f_2^2f_8^4}-4q\frac{f_2^2f_8^4}{f_4^7},
	\]
	from which, we have 
   \begin{align}\label{7-2}
	\sum_{n=0}^\infty   
	a_{1^{4}     4^{ -5}}(2n)q^n = \frac{f_2^5}{f_1^2f_4^4} 
	\end{align}
	and 
	   \begin{align}\label{7-3}
	\sum_{n=0}^\infty   
	a_{1^{4}     4^{ -5}}(2n+1)q^n =  -4 \frac{f_1^2f_4^4}{f_2^7}.
	\end{align}
	
Substituting \eqref{m-1}  into \eqref{7-2}, we can deduce the following generating 
 functions 
 \begin{align}
	\sum_{n=0}^\infty   
a_{1^{4}     4^{ -5}}(8n)q^n &=    \frac{f_2^5}{f_1^4f_4^2}
\succcurlyeq
\sum_{n=0}^\infty q^n, \label{7-4}
\\
	\sum_{n=0}^\infty   
a_{1^{4}     4^{ -5}}(8n+2)q^n &=   2 \frac{f_4^2}{f_1^2f_2}\succcurlyeq
\sum_{n=0}^\infty q^n, \label{7-5}
\\
	\sum_{n=0}^\infty   
a_{1^{4}     4^{ -5}}(8n+4)q^n &=  0, \label{7-6}
\\
\sum_{n=0}^\infty   
a_{1^{4}     4^{ -5}}(8n+6)q^n &=  0. \label{7-8}
 \end{align}
 Substituting \eqref{4-40} into \eqref{7-3}, we obtain the generating
  functions of $a_{1^{4}     4^{ -5}}(4n+1)$ 
   and  $a_{1^{4}     4^{ -5}}(4n+3)$: 
   \begin{align}
   	\sum_{n=0}^\infty   
   	a_{1^{4}     4^{ -5}}(4n+1)q^n &= -4\frac{ f_2^2f_4^5}{f_1^6f_8^2}\preccurlyeq
   	- \sum_{n=0}^\infty q^n, \label{7-9}
   	\\
   	  	\sum_{n=0}^\infty   
   	a_{1^{4}     4^{ -5}}(4n+3)q^n &= 8\frac{ f_2^4f_8^2}{f_1^6f_4}\succcurlyeq  \sum_{n=0}^\infty q^n.
   	\label{7-10}
   \end{align}
	Theorem  \ref{Th-7-1} follows from \eqref{7-4}--\eqref{7-10}.
	 This completes the proof of Theorem \ref{Th-7-1}.
	  \qed  
	
	\begin{theorem}\label{Th-7-2}
		Define 
		\begin{align*}
			S_9:=&\{( 4,0,0,-4),( 4,0,0,-3),( 4,0,0,-2),
			(4,1,0,-6),(4,1,0,-5),(4,1,0,-4), (4,1,0,-3)  \}. 
		\end{align*}
		If $(t_1,t_2,t_3,t_4)\in S_9$, then for $n\geq 0$,
		\begin{align*}
			a_{1^{t_1}2^{t_2}3^{t_3}4^{t_4}}(8n)&> 0, \ 
				a_{1^{t_1}2^{t_2}3^{t_3}4^{t_4}}(8n+2) > 0,\ 
					a_{1^{t_1}2^{t_2}3^{t_3}4^{t_4}}(4n+3) > 0,\\
						a_{1^{t_1}2^{t_2}3^{t_3}4^{t_4}}(4n+1)&< 0, \ 
					a_{1^{t_1}2^{t_2}3^{t_3}4^{t_4}}(8n+4) < 0,\ 
					a_{1^{t_1}2^{t_2}3^{t_3}4^{t_4}}(8n+6) < 0.
		\end{align*}
	 
	\end{theorem}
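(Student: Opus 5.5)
We would follow the pattern of Theorems~\ref{Th-7-1} and \ref{Th-4-8}: repeated $2$-dissection, followed at each stage either by direct positivity or by the substitution $q\to-q$ together with $(-q;-q)_\infty=f_2^3/(f_1f_4)$. Write the generating function as $f_1^4f_2^{t_2}/f_4^{k}$, where $k=-t_4$, $t_2\in\{0,1\}$ and $k$ ranges over the values listed in $S_9$.

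\emph{Step 1.} Substitute \eqref{4-68} for $f_1^4$. Extracting even and odd powers gives
\[
\sum_{n\ge0}a(2n)q^n=\frac{f_1^{t_2-2}f_2^{10-k}}{f_4^4},\qquad
\sum_{n\ge0}a(2n+1)q^n=-4\,\frac{f_1^{t_2+2}f_4^4}{f_2^{k+2}} .
\]

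\emph{Step 2 (odd part).} Replace $q$ by $-q$ in the odd-part series. The factor $f_1^{t_2+2}$ becomes $f_2^{3(t_2+2)}/(f_1f_4)^{t_2+2}$, so the product turns into $-4$ times a quotient of the form $f_2^{\alpha}f_4^{\beta}/f_1^{t_2+2}$. We would then show that this quotient equals $1/f_1$ times a series with nonnegative coefficients and constant term $1$. This gives $(-1)^n a(2n+1)<0$ for all $n$, i.e.\ $a(4n+1)<0$ and $a(4n+3)>0$. In each of the seven cases we expect this to reduce to a routine check that the exponents have the right signs, as in \eqref{4-70}--\eqref{4-74}. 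If a positive power of $f_2$ or $f_4$ survives in the numerator, we would absorb it using the theta series \eqref{2-10}, \eqref{2-10-1} or the $t$-core generating function \eqref{2-11}.

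\emph{Step 3 (even part, the main step).} For $t_2=0$ we need the $2$-dissection of $f_2^{10-k}/(f_1^2f_4^4)$, which we obtain by substituting \eqref{m-1}. This yields
\begin{align*}
\sum_{n\ge0}a(4n)q^n&=\frac{f_1^{5-k}f_4^5}{f_2^4f_8^2}, &
\sum_{n\ge0}a(4n+2)q^n&=2\,\frac{f_1^{5-k}f_4^{2}f_8^2}{f_2^{4}}\cdot\frac{1}{f_4}\ (\text{up to bookkeeping}).
\end{align*}
For $t_2=1$ we substitute \eqref{7-22} for $1/f_1$ instead, which introduces $S(-q)$ and $T(-q)$. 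Since $k\le4$ here (respectively $k\le6$ when $t_2=1$), a genuine positive power of $f_1$ now appears in the numerator. We therefore apply $q\to-q$ once more to these $4n$ and $4n+2$ series, exactly as in \eqref{4-43}--\eqref{4-44} and \eqref{g-4}--\eqref{g-5}; in the $t_2=1$ case, $S(-q)$ and $T(-q)$ become $S(q)$ and $T(q)$, which have positive coefficients. This should produce series $\succcurlyeq\sum q^n$, giving the alternating signs $a(8n)>0$, $a(8n+4)<0$ and $a(8n+2)>0$, $a(8n+6)<0$.

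The main obstacle is this last step. We must confirm that, after $q\to-q$, every exponent lands on the correct side, and that enough factors $1/(1-q^j)$ with small $j$ remain to make every coefficient strictly positive. For the boundary cases ($k=2,3$ when $t_2=0$, and $t_2=1$ with $k=3$) the power of $f_2$ in the numerator is large. There we would first try to rewrite the offending factors as products of $\sum q^{n^2}$, $\sum q^{n(n+1)/2}$ and $c_t$-series, and then apply the fact stated at the start of Section~2 that $F(q)\succcurlyeq 1+q+\cdots+q^{M-1}$ implies $F(q)/(1-q^M)\succcurlyeq\sum_{n\ge0}q^n$.
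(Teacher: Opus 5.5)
Your plan works, and for the even-indexed coefficients it is the paper's argument. Substituting \eqref{4-68} gives \eqref{7-12-1}--\eqref{7-13-1} and \eqref{7-20}--\eqref{7-21}. Then \eqref{m-1} (for $t_2=0$) or \eqref{7-22} (for $t_2=1$) produces the $4n$ and $4n+2$ series, and one more $q\to-q$ gives \eqref{7-15}--\eqref{7-16} and \eqref{7-26}--\eqref{7-27}.

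Where you genuinely differ is the odd-indexed part. The paper performs a second $2$-dissection there. It substitutes \eqref{4-40} (and, for $t_2=1$, also \eqref{7-23}) into the $2n+1$ series, obtaining separate generating functions \eqref{7-17}--\eqref{7-18} and \eqref{7-28}--\eqref{7-29} for $a(4n+1)$ and $a(4n+3)$. You instead apply $q\to-q$ directly and get the single quotient
\[
\sum_{n\ge0}(-1)^n a(2n+1)q^n=-4\,\frac{f_2^{3t_2+4-k}f_4^{2-t_2}}{f_1^{t_2+2}} .
\]
This suffices, because $(-1)^na(2n+1)<0$ encodes both $a(4n+1)<0$ and $a(4n+3)>0$.

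Your route is shorter. For $t_2=1$ it also avoids the paper's expressions containing $S(-q)$ and $T(-q)$, which are not sign-definite on their own. To justify \eqref{7-28}--\eqref{7-29} one must observe that $S(-q)/f_1$ and $T(-q)/f_1$ have nonnegative coefficients, and the paper leaves this implicit. In exchange, the paper's route yields explicit generating functions for every residue class modulo $4$, which is its stated aim.

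Two slips need fixing.

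\emph{First slip.} For $t_2=0$ the $4n+2$ series is $2f_1^{5-k}f_8^2/(f_2^2f_4)$, not what you wrote. After $q\to-q$ it becomes $2f_2^{13-3k}f_8^2/(f_1^{5-k}f_4^{6-k})$.

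\emph{Second slip.} In Step 2 the quotient is not always $1/f_1$ times a nonnegative series. Write $\psi(q)=\sum_{n\ge0}q^{n(n+1)/2}=f_2^2/f_1$. For $(t_2,k)=(0,2)$ and $(1,3)$ you would need $\psi(q)f_4^2$ and $\psi(q)^2f_4$, respectively, to be nonnegative. They are not: their coefficients at $q^4$ and $q^5$ are both $-2$. Your fallback of absorbing factors into theta series rescues these cases, since the quotients equal $\psi(q)\psi(q^2)/(q;q^2)_\infty$ and $\psi(q)^2/(q,q^2,q^3;q^4)_\infty$.

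The same care is needed in Step 3, including a case you did not flag. For $t_2=0$, $k=4$, the $4n$ series after $q\to-q$ is
\[
\frac{f_4^4}{f_1f_2f_8^2}=\frac{\sum_{n=-\infty}^{\infty}q^{2n^2}}{(q;q^2)_\infty f_4},
\]
which is fine. Splitting off $1/f_1$ instead would fail, because $f_4^4/(f_2f_8^2)$ has coefficient $-2$ at $q^4$.
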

	
	\noindent{\it Proof.}
	 By \eqref{1-1},   
	   \begin{align}\label{7-11}
	\sum_{n=0}^\infty   
	a_{1^{4}      4^{ t_4}}(n)q^n =  f_1^4   f_4^{ t_4} .
	\end{align}
	Using \eqref{4-68} and \eqref{7-11}, we can prove that 
		\begin{align}
		\sum_{n=0}^\infty   
		a_{1^{4}      4^{ t_4}}(2n)q^n &
		= \frac{ f_2^{10+t_4}  }{  f_1^2 f_4^4 },
		\label{7-12-1}\\
		\sum_{n=0}^\infty   
		a_{1^{4}      4^{ t_4}}(2n+1)q^n &
		= -4\frac{ f_1^{2} f_4^4 }{  f_2^{2-t_4} }.\label{7-13-1}
	\end{align}
	Substituting \eqref{m-1}
	 into \eqref{7-12-1}, we can show that 
	\begin{align}
			\sum_{n=0}^\infty   
		a_{1^{4}      4^{ t_4}}(4n)q^n &
		 = \frac{ f_1^{5+t_4} f_4^5 }{  f_2^4 f_8^2 },
		 \label{7-12}\\
			\sum_{n=0}^\infty   
		a_{1^{4}      4^{ t_4}}(4n+2)q^n &
		= 2\frac{ f_1^{5+t_4} f_8^2 }{  f_2^2 f_4 }.\label{7-13}
	\end{align}
	Replacing $q$ by $-q$
	 in \eqref{7-12} and \eqref{7-13} yields that 
	  for $-4\leq t_4\leq -2$, 
		\begin{align}
		\sum_{n=0}^\infty  (-1)^n   
		a_{1^{4}      4^{ t_4}}(4n)q^n &
		= \frac{ f_2^{11+3t_4}  }{  f_1^{5+t_4}f_4^{t_4} f_8^2 }
		\succcurlyeq \sum_{n=0}^\infty q^n,
	\label{7-15}	\\
		\sum_{n=0}^\infty  (-1)^n  
		a_{1^{4}      4^{ t_4}}(4n+2)q^n &
		= 2\frac{ f_2^{13+3t_4} f_8^2 }{  f_1^{5+t_4} f_4^{6+t_4} }\succcurlyeq
		 \sum_{n=0}^\infty q^n.
		\label{7-16}
	\end{align}
	Based on  \eqref{4-40}
 and  \eqref{7-13-1}, we can prove that 	  for $-4\leq t_4\leq -2$, 
		\begin{align}
		\sum_{n=0}^\infty   
		a_{1^{4}      4^{ t_4}}(4n+1)q^n &
		= -4\frac{ f_2^2f_4^5}{  f_1^{1-t_4} f_8^2 } \preccurlyeq -  \sum_{n=0}^\infty q^n \label{7-17}
	\end{align}
	and 
		\begin{align}
		\sum_{n=0}^\infty   
		a_{1^{4}      4^{ t_4}}(4n+3)q^n &
		= 8 \frac{ f_2^4f_8^2}{  f_1^{1-t_4} f_4 }\succcurlyeq  \sum_{n=0}^\infty q^n.\label{7-18}
	\end{align}

	The generating function of  $a_{1^{4}  2^1    4^{ t_4}}(n)$
	 is 
	 \begin{align}
	 	\sum_{n=0}^\infty a_{1^{4}  2^1    4^{ t_4}}(n)q^n=f_1^4f_2f_4^{t_4}. \label{7-19}
	 \end{align}
Substituting  \eqref{4-68} into  \eqref{7-19}, we can prove that 
	\begin{align}
		\sum_{n=0}^\infty   
		a_{1^{4}  2^1    4^{ t_4}}(2n)q^n &
		=\frac{1}{f_1}\cdot  \frac{ f_2^{10+t_4}  }{   f_4^4 },
		\label{7-20}\\
		\sum_{n=0}^\infty   
		a_{1^{4}  2^1    4^{ t_4}}(2n+1)q^n &
		= -4f_1\cdot f_1^2 \cdot \frac{   f_4^4 }{  f_2^{2-t_4} }.\label{7-21}
	\end{align}
	Substituting \eqref{7-22} into \eqref{7-20}, we get 
		\begin{align}
		\sum_{n=0}^\infty   
		a_{1^{4}  2^1    4^{ t_4}}(4n)q^n &
		= \frac{ f_1^{7+t_4}  }{  f_2^2  }S(-q) \label{7-24}
	\end{align}
	and 
		\begin{align}
		\sum_{n=0}^\infty   
		a_{1^{4}  2^1    4^{ t_4}}(4n+2)q^n &
		= \frac{ f_1^{7+t_4}  }{  f_2^2  }T(-q). \label{7-25}
	\end{align}
	Replacing $q$ by $-q$ in \eqref{7-24} and \eqref{7-25} yields 
	that for $-6\leq t_4\leq -3$,
			\begin{align}
		\sum_{n=0}^\infty (-1)^n   
		a_{1^{4}  2^1    4^{ t_4}}(4n)q^n &
		= \frac{ f_2^{19+3t_4}  }{  f_1^{7+t_4} f_4^{7+t_4}  }S(q) \succcurlyeq
		\sum_{n=0}^\infty q^n \label{7-26}
	\end{align}
	and 
				\begin{align}
		\sum_{n=0}^\infty (-1)^n   
		a_{1^{4}  2^1    4^{ t_4}}(4n+2)q^n &
		= \frac{ f_2^{19+3t_4}  }{  f_1^{7+t_4} f_4^{7+t_4}  }T(q)
		\succcurlyeq  
		\sum_{n=0}^\infty q^n .\label{7-27}
	\end{align}
	Substituting \eqref{4-40}
	 and \eqref{7-23} into \eqref{7-21}, we can prove that 
	  for $-6\leq t_4\leq -3$, 
	 			\begin{align}
	 	\sum_{n=0}^\infty   
	 	a_{1^{4}  2^1    4^{ t_4}}(4n+1)q^n & 
	 	= -4\frac{f_2^3f_4^5}{f_1^{1-t_4}f_8^2} S(-q)
	 	-8q\frac{f_2^5f_8^2}{f_1^{1-t_4}f_4}T(-q)
	  \preccurlyeq - 
	 	\sum_{n=0}^\infty q^n \label{7-28}
	 \end{align}
	 and 
	 		\begin{align}
	 	\sum_{n=0}^\infty    
	 	a_{1^{4}  2^1    4^{ t_4}}(4n+3)q^n & 
	 	= 4\frac{f_2^3f_4^5}{f_1^{1-t_4}f_8^2} T(-q)
	 	+8\frac{f_2^5f_8^2}{f_1^{1-t_4}f_4}S(-q)
	  \succcurlyeq  
	 	\sum_{n=0}^\infty q^n .\label{7-29}
	 \end{align}
	 Theorem \ref{Th-7-2}
	  follows from \eqref{7-15}--\eqref{7-18}
	   and \eqref{7-26}--\eqref{7-29}. \qed

		\begin{theorem}\label{Th-7-3}
		For $n\geq 0$, 
		\begin{align*}
			a_{1^{4} 2^2    4^{ -10}}(8n)& >0,\ 
			 	a_{1^{4} 2^2    4^{ -10}}(4n+3) >0,\ 
			a_{1^{4} 2^2    4^{ -10}}(4n+2) =0, \\
			a_{1^{4} 2^2    4^{ -10}}(8n+4)& =0,\ 
			a_{1^{4} 2^2    4^{ -10}}(4n+1)<0.
		\end{align*}

	\end{theorem}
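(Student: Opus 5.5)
The plan is to apply the $2$-dissection \eqref{4-68} of $f_1^4$ twice, in the same way as in the proofs of Theorems \ref{Th-4-8} and \ref{Th-7-1}. By \eqref{1-1},
\[
\sum_{n=0}^\infty a_{1^42^24^{-10}}(n)q^n=\frac{f_1^4f_2^2}{f_4^{10}}.
\]
Substituting \eqref{4-68} should collapse the even part completely:
\[
\sum_{n=0}^\infty a_{1^42^24^{-10}}(n)q^n=\frac{1}{f_8^4}-4q\frac{f_2^4f_8^4}{f_4^{12}}.
\]

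\emph{Even part.} Extracting even powers and replacing $q^2$ by $q$ gives
\[
\sum_{n=0}^\infty a_{1^42^24^{-10}}(2n)q^n=\frac{1}{f_4^4}.
\]
This is a series in $q^4$ alone. Hence $a_{1^42^24^{-10}}(4n+2)=0$ and $a_{1^42^24^{-10}}(8n+4)=0$. We also get
\[
\sum_{n=0}^\infty a_{1^42^24^{-10}}(8n)q^n=\frac{1}{f_1^4}\succcurlyeq\sum_{n=0}^\infty q^n,
\]
which gives $a_{1^42^24^{-10}}(8n)>0$.

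\emph{Odd part.} Here we have
\[
\sum_{n=0}^\infty a_{1^42^24^{-10}}(2n+1)q^n=-4\frac{f_1^4f_4^4}{f_2^{12}}.
\]
Applying \eqref{4-68} once more turns this into
\[
-4\frac{f_4^{14}}{f_2^{14}f_8^4}+16q\frac{f_4^{2}f_8^4}{f_2^{10}}.
\]
Splitting by parity then yields
\[
\sum_{n=0}^\infty a_{1^42^24^{-10}}(4n+1)q^n=-4\frac{f_2^{14}}{f_1^{14}f_4^4},\qquad
\sum_{n=0}^\infty a_{1^42^24^{-10}}(4n+3)q^n=16\frac{f_2^{2}f_4^4}{f_1^{10}}.
\]

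\emph{Positivity.} For the first series, write $\frac{f_2^{14}}{f_1^{14}f_4^4}=\frac{1}{(q;q^2)_\infty^{14}f_4^4}$. Its factor $1/(q;q^2)_\infty$ dominates $1/(1-q)$, so the series is $\succcurlyeq\sum q^n$, and therefore $a_{1^42^24^{-10}}(4n+1)<0$. For the second series, write
\[
\frac{f_2^{2}f_4^4}{f_1^{10}}=\frac{1}{(q;q^2)_\infty^2}\cdot\frac{1}{f_1^7}\sum_{n=0}^\infty c_4(n)q^n,
\]
using \eqref{2-11}. Since $c_4(0)=1$ and all factors have nonnegative coefficients, this is again $\succcurlyeq\sum q^n$, so $a_{1^42^24^{-10}}(4n+3)>0$.

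The only delicate step is the exact cancellation $f_1^4f_2^2/f_4^{10}\cdot f_4^{10}/(f_2^2f_8^4)=1/f_8^4$. It produces the zeros at $4n+2$ and $8n+4$, and it requires checking the exponent bookkeeping carefully in both applications of \eqref{4-68}. The remaining positivity arguments are routine.
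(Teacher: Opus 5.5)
Your proposal is correct and matches the paper's proof: two applications of \eqref{4-68} give the same generating functions $1/f_4^4$, $-4f_1^4f_4^4/f_2^{12}$, $-4f_2^{14}/(f_1^{14}f_4^4)$ and $16f_2^2f_4^4/f_1^{10}$. Your explicit positivity arguments, via $1/(q;q^2)_\infty\succcurlyeq 1/(1-q)$ and the $4$-core factorization, supply details the paper leaves implicit.
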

	
	\noindent{\it Proof.}
	 In view of \eqref{1-1}, 
		     \begin{align}\label{7-30}
	\sum_{n=0}^\infty   
	a_{1^{4} 2^2    4^{ -10}}(n)q^n = \frac{f_1^4 f_2^2 }{  f_4^{10} }. 
	\end{align}
	Substituting \eqref{4-68}
	 into \eqref{7-30}, we can prove that 
			     \begin{align}\label{7-31}
	\sum_{n=0}^\infty   
	a_{1^{4} 2^2    4^{ -10}}(2n)q^n = \frac{1}{  f_4^{4} }
	\end{align}
	and 
			     \begin{align}\label{7-32}
		\sum_{n=0}^\infty   
		a_{1^{4} 2^2    4^{ -10}}(2n+1)q^n = -4
		 \frac{f_1^4f_4^4}{  f_2^{12} }. 
	\end{align}
		Substituting \eqref{4-68}
	into \eqref{7-32}, we can prove that 
	\begin{align}\label{7-33}
	\sum_{n=0}^\infty   
	a_{1^{4} 2^2    4^{ -10}}(4n+1)q^n = -4
	 \frac{f_2^{14}}{  f_1^{14}f_4^{4} } \preccurlyeq -\sum_{n=0}^\infty
	 q^n
	\end{align}
	and 
		\begin{align}\label{7-34}
	\sum_{n=0}^\infty   
	a_{1^{4} 2^2    4^{ -10}}(4n+3)q^n = 16 
	\frac{f_2^{2}f_4^4 }{  f_1^{10}  } \succcurlyeq
	 \sum_{n=0}^\infty
	 q^n.
	\end{align}
	Theorem \ref{Th-7-3}
	 follows from \eqref{7-31}, 
	 \eqref{7-33} and \eqref{7-34}. \qed 
	
		\begin{theorem}\label{Th-7-4-0}
	For $-9\leq t_4 \leq -2$ and $n\geq 0$, 
			\begin{align*}
		a_{1^{4}2^{2}4^{t_4}}(8n)& >0,\quad 	a_{1^{4}2^{2}4^{t_4}}(4n+3) >0,\ 
		a_{1^{4}2^{2}4^{t_4}}(4n+2 )=0 ,\\
		 a_{1^{4}2^{2}4^{t_4}}(8n+4 ) &<0, 
	\quad 	a_{1^{4}2^{2}4^{t_4}}(4n+1 )<0.
	\end{align*} 
	\end{theorem}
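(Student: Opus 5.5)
The plan is to follow the proof of Theorem~\ref{Th-7-3}. We 2-dissect twice with \eqref{4-68}, and then show each resulting eta-quotient is coefficientwise $\succcurlyeq \sum_{n\ge0} q^n$ or $\preccurlyeq -\sum_{n\ge0}q^n$. We do this by writing it as a product of theta series $\varphi(q)=\sum q^{n^2}=f_2^5/(f_1^2f_4^2)$ and $\psi(q)=\sum q^{n(n+1)/2}=f_2^2/f_1$, times reciprocals of products.

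\textbf{Dissection.} Put $k=10+t_4\in\{1,\dots,8\}$. Multiplying \eqref{4-68} by $f_2^2f_4^{t_4}$ gives
\[
\sum_{n\ge0}a_{1^42^24^{t_4}}(n)q^n=\frac{f_4^{k}}{f_8^4}-4q\,f_2^4f_8^4f_4^{t_4-2}.
\]
The even part is $\sum a(2n)q^n=f_2^{k}/f_4^4$, which is a series in $q^2$. Hence $a(4n+2)=0$, and $\sum a(4n)q^n=f_1^{k}/f_2^4$. Replacing $q$ by $-q$ and using $(-q;-q)_\infty=f_2^3/(f_1f_4)$ gives
\[
\sum_{n\ge0}(-1)^na(4n)q^n=\frac{f_2^{3k-4}}{f_1^kf_4^k}.
\]
For $k=2m$ this equals $\varphi(q)^m/f_2^{4-m}$ with $m\le4$. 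For $k=2m+1$ it equals $\varphi(q)^m f_2^{m-1}/(f_1f_4)$ with $m\le3$, and this is $1/(f_1f_2f_4)$, $\varphi/(f_1f_4)$, $\varphi^2/((q;q^2)_\infty f_4)$ or $\varphi^3\psi/f_4$. Each is $\succcurlyeq\sum q^n$: either a factor $1/f_1$ or $1/(q;q^2)_\infty$ appears together with $1/f_4\succcurlyeq 1/(1-q^4)$ and theta factors $\succcurlyeq 1+2q$, or, for $k=8$, the series is $\varphi^4$ and $r_4(n)>0$ by Lagrange. This yields $a(8n)>0$ and $a(8n+4)<0$.

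\textbf{Odd part.} Here $\sum a(2n+1)q^n=-4f_1^4f_4^4f_2^{t_4-2}$. Substituting \eqref{4-68} once more and separating even and odd powers gives
\[
\sum a(4n+1)q^n=-4\frac{f_2^{14}}{f_1^{4-t_4}f_4^4}=-4\,\varphi(q)^2\psi(q)^2\frac{1}{f_1^{-2-t_4}},
\]
and
\[
\sum a(4n+3)q^n=16\frac{f_2^2f_4^4}{f_1^{-t_4}}=16\,\psi(q)^2\psi(q^2)^2\frac{1}{f_1^{-2-t_4}},
\]
using $\psi(q^2)=f_4^2/f_2$. For $t_4\le-3$ the extra factor $1/f_1\succcurlyeq\sum q^n$ together with positive theta factors finishes both inequalities at once.

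\textbf{Main obstacle.} The hard case is $t_4=-2$, where no $1/f_1$ remains. Then we need that every $n$ is represented by $x^2+y^2+T_a+T_b$, and also by $T_a+T_b+2T_c+2T_d$. For the first I would use that already $x^2+y^2+T_z$ is universal, a classical result (Liouville/Sun). For the second I would use Liouville's theorem that $T_a+T_b+2T_c$ is universal. Both are coefficient lower bounds, since the dropped factors have constant term $1$ and nonnegative coefficients. If a citation is unavailable, I would instead try to write one factor as $\psi(q)\psi(q^2)$-type products with known positive closed forms. Combining the four residue-class statements proves the theorem.
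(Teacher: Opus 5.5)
Your proposal is correct and is essentially the paper's proof. The same two applications of \eqref{4-68} give exactly \eqref{7-36}--\eqref{7-41}, and your positivity arguments fill in what the paper only asserts: Lagrange's theorem for $\varphi(q)^4$, and for the $t_4=-2$ odd classes the universality of $x^2+y^2+T_z$ and Liouville's $T_a+T_b+2T_c$. One small expository slip: for $k=2,4,6$ the bound $\varphi(q)^m/f_2^{4-m}\succcurlyeq (1+2q)/(1-q^2)\succcurlyeq\sum_{n\ge0}q^n$ relies on the factor $1/f_2$, which your summary sentence about $1/f_1$, $1/(q;q^2)_\infty$ and $1/f_4$ does not literally cover.
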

	
	\noindent{\it Proof.}
	 The generating function of  $	a_{1^{4}2^{2}4^{t_4}}(n)$
	  is 
	  \begin{align}
	  	\sum_{n=0}^\infty
	  	 	a_{1^{4}2^{2}4^{t_4}}(n)q^n=f_1^4f_2^2f_4^{t_4}.
	  	 	\label{7-35}
	  \end{align}
		Substituting \eqref{4-68}
	into \eqref{7-35}, we can prove that 
	  \begin{align}
	\sum_{n=0}^\infty   
	a_{1^{4}2^{2}4^{t_4}}(4n)q^n &=   
	\frac{f_1^{10+t_4} }{  f_2^{4}  }, \label{7-36}\\
		\sum_{n=0}^\infty   
	a_{1^{4}2^{2}4^{t_4}}(4n+2)q^n &=   
	0, \label{7-37}\\
		\sum_{n=0}^\infty   
	a_{1^{4}2^{2}4^{t_4}}(2n+1)q^n &=   
	-4f_1^4f_2^{t_4-2}f_4^4. \label{7-38}
	\end{align}
	Replacing $q$ by $-q$
	 in \eqref{7-36} yields 
	 \begin{align}
	 		\sum_{n=0}^\infty   (-1)^n 
	 	a_{1^{4}2^{2}4^{t_4}}(4n)q^n &=   
	 	\frac{f_2^{26+3t_4} }{  f_1^{10+t_4} f_4^{10+t_4}  }
	 	 \succcurlyeq \sum_{n=0}^\infty
	 	 q^n.\label{7-39}
	 \end{align}
 		Substituting \eqref{4-68}
 into \eqref{7-38}, we deduce that 
\begin{align}
	\sum_{n=0}^\infty   
	a_{1^{4}2^{2}4^{t_4}}(4n+1)q^n =   
	-4\frac{f_2^{14}}{f_1^{4-t_4}f_4^4} \preccurlyeq -\sum_{n=0}^\infty
	q^n \label{7-40}
	\end{align}
	and 
\begin{align}
	\sum_{n=0}^\infty   
	a_{1^{4}2^{2}4^{t_4}}(4n+3)q^n =   
	16\frac{f_2^{2}f_4^4 }{f_1^{-t_4}} \succcurlyeq \sum_{n=0}^\infty
	q^n. \label{7-41}
	\end{align}
	Theorem \ref{Th-7-4-0} follows from \eqref{7-37} and 
  \eqref{7-39}--\eqref{7-41}. \qed

	\begin{theorem}\label{Th-7-4}
		Define 
		\begin{align*}
			S_{10}:=&\{ (4,3,-4),  (4,4,-4), (4,4,-3)  \}. 
		\end{align*}
		If $(t_1,t_2, t_4)\in S_{10}$, then for $n\geq 0$,
			\begin{align*}
			a_{1^{t_1}2^{t_2} 4^{t_4}}(8n)& >0,\quad 
			 a_{1^{t_1}2^{t_2} 4^{t_4}}(4n+3) >0,\quad  a_{1^{t_1}2^{t_2} 4^{t_4}}(8n+6) >0, \\
			a_{1^{t_1}2^{t_2}4^{t_4}}(4n+1 ) &<0, \quad 
			  a_{1^{t_1}2^{t_2} 4^{t_4}}(8n+2) <0, 
			 \quad  a_{1^{t_1}2^{t_2} 4^{t_4}}(8n+4) <0.  
		\end{align*}
	\end{theorem}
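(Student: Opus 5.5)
The plan is to follow the proofs of Theorems~\ref{Th-7-2} and \ref{Th-7-4-0}: repeatedly $2$-dissect
\[
\sum_{n\ge0}a_{1^42^{t_2}4^{t_4}}(n)q^n=f_1^4f_2^{t_2}f_4^{t_4}
\]
until every residue class modulo $8$ (even $n$) or modulo $4$ (odd $n$) has an explicit generating function. For each such function we then show it is $\succcurlyeq\sum q^n$ or $\preccurlyeq-\sum q^n$, using theta-products and the fact recalled in Section~2.

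\textbf{Step 1 (first dissection).} Substituting \eqref{4-68} into the generating function gives
\[
\sum_{n\ge0} a(2n)q^n=\frac{f_1^{t_2-2}f_2^{10+t_4}}{f_4^4},\qquad
\sum_{n\ge0} a(2n+1)q^n=-4f_1^{t_2+2}f_2^{t_4-2}f_4^4 .
\]

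\textbf{Step 2 (even part).} For $(t_2,t_4)=(3,-4)$ the even part is $f_1f_2^6/f_4^4$. Inserting \eqref{7-23} gives
\[
\sum_{n\ge0} a(4n)q^n=\frac{f_1^6}{f_2^3}S(-q),\qquad \sum_{n\ge0} a(4n+2)q^n=-\frac{f_1^6}{f_2^3}T(-q).
\]
Replacing $q$ by $-q$ turns the common factor into $f_2^{15}/(f_1^6f_4^6)=\bigl(\sum q^{n^2}\bigr)^3$ by \eqref{2-10}. Then $S(q),T(q)\succcurlyeq\sum q^n$ yields $a(8n)>0$, $a(8n+4)<0$, $a(8n+2)<0$ and $a(8n+6)>0$.

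For $t_2=4$ the even part is $f_1^2f_2^{10+t_4}/f_4^4$, and here I would use \eqref{4-40} instead. The result is
\[
\sum_{n\ge0} a(4n)q^n=\frac{f_1^{11+t_4}f_4^5}{f_2^6f_8^2},\qquad
\sum_{n\ge0} a(4n+2)q^n=-2\frac{f_1^{11+t_4}f_8^2}{f_2^4f_4}.
\]
Changing $q\to-q$ replaces $f_1$ by $f_2^3/(f_1f_4)$. With $11+t_4\in\{7,8\}$, the resulting quotients are products of $\sum q^{n^2}$, $\sum q^{n(n+1)/2}$ and factors $1/f_k$. I will check each exponent explicitly, then conclude as before, since the expansion has constant term $1$ and a factor $1/f_1$.

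\textbf{Step 3 (odd part; main obstacle).} Here there is no sign alternation, so $a(4n+1)$ and $a(4n+3)$ must be read off directly. I would write $f_1^{t_2+2}=f_1^4\cdot f_1^{t_2-2}$. Write \eqref{4-68} as $f_1^4=A-4qB$. For $t_2=3$ take $f_1=f_4(S(-q^2)-qT(-q^2))$ from \eqref{7-23}; for $t_2=4$ take $f_1^2=C-2qD$ from \eqref{4-40}. Here $A,B,C,D$ and $S(-q^2),T(-q^2)$ are series in $q^2$.

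The product $(A-4qB)(C-2qD)$ has even part $AC+8q^2BD$ and odd part $-q(2AD+4BC)$, each with all terms of one sign. After multiplying by $-4f_2^{t_4-2}f_4^4$, $\sum a(4n+1)q^n$ is $-4$ times a sum of eta-quotients and $\sum a(4n+3)q^n$ is $+4$ times such a sum, exactly as in \eqref{7-28}--\eqref{7-29}. The $t_2=3$ case works the same way with $S(-q),T(-q)$ in place of $C,D$.

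The difficulty is that $S(-q)$ and $T(-q)$ appear with argument $-q$ and are not coefficientwise positive. For $t_2=3$ I would therefore handle them as the paper does in \eqref{7-28}--\eqref{7-29}: confirm that each summand is a theta-type positive series divided by a power of $f_1$, and that at least one summand dominates $\sum q^n$ (or its negative). If $S(-q)$ or $T(-q)$ cannot be absorbed this way, I would return to the undissected product $-4f_1^5f_2^{t_4-2}f_4^4$ and use \eqref{7-22} instead, to get positive denominators. Step~3 then gives $a(4n+1)<0$ and $a(4n+3)>0$, completing all six classes.
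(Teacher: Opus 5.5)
Your proposal is essentially the paper's proof: the same dissections via \eqref{4-68}, \eqref{7-23} and \eqref{4-40} lead to the paper's generating functions \eqref{7-44}--\eqref{7-57}, and you correctly use \eqref{4-40}, rather than the \eqref{4-68} cited in the paper's text, to obtain \eqref{7-52}--\eqref{7-53}. Two small repairs are needed: first, $T(q)=1+q^3+q^4+\cdots$ is not $\succcurlyeq\sum q^n$, but $\bigl(\sum q^{n^2}\bigr)^3\succcurlyeq 1+6q+12q^2$ supplies the missing low-order terms; second, your Step~3 absorption does work, because $1/(-q^{j};q^8)_\infty=(q^{j};q^8)_\infty/(q^{2j};q^{16})_\infty$ for $j=1,3,5,7$ shows that $S(-q)/f_1$ and $T(-q)/f_1$ have nonnegative coefficients, while the remaining cofactors $f_2^{15}/(f_1^7f_4^4)=\bigl(\sum q^{n^2}\bigr)^3\frac{f_4^2}{f_2}\frac{f_2}{f_1}$ and $f_2^3f_4^4/f_1^3=\bigl(\frac{f_2^2}{f_1}\bigr)^3\bigl(\frac{f_4^2}{f_2}\bigr)^2\frac{1}{f_2}$ are visibly nonnegative, so no fallback to \eqref{7-22} is needed (that fallback would reintroduce $S(-q)$ and $T(-q)$ anyway).
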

	
\noindent{\it Proof.}
 The generating function
  of $a_{1^42^3 4^{-4}}(n)$ is
  \begin{align*}
  	\sum_{n=0}^\infty
  	 a_{1^42^3 4^{-4}}(n) q^n=f_1^4f_2^3f_4^{-4}.
  \end{align*}
  Substituting \eqref{4-68}
   into the above identity, we can prove that 
  \begin{align}\label{7-42}
	\sum_{n=0}^\infty
	a_{1^42^3 4^{-4}}(2n) q^n=f_1\cdot \frac{f_2^{6}}{f_4^4}
\end{align}	
and 
  \begin{align}\label{7-43}
	\sum_{n=0}^\infty
	a_{1^42^3 4^{-4}}(2n+1) q^n=-4f_1\cdot f_1^4\cdot  \frac{f_4^{4}}{f_2^{6}}. 
\end{align}	
  Substituting \eqref{7-23} 
into \eqref{7-42}, we can prove that 	
  \begin{align}\label{7-44}
	\sum_{n=0}^\infty
	a_{1^42^3 4^{-4}}(4n) q^n=  \frac{f_1^{6}}{f_2^3}S(-q)
\end{align}	
and 
  \begin{align}
	\sum_{n=0}^\infty
	a_{1^42^3 4^{-4}}(4n+2) q^n=-  \frac{f_1^{6}}{f_2^3}T(-q).
	 \label{7-45}
\end{align}	
Replacing $q$ by $-q$ in \eqref{7-44} and \eqref{7-45}, we see that 
  \begin{align} \label{7-46}
	\sum_{n=0}^\infty (-1)^n 
	a_{1^42^3 4^{-4}}(4n) q^n=  \frac{f_2^{15}}{f_1^{6}f_4^{6}}S(q)\succcurlyeq 
	\sum_{n=0}^\infty q^n
\end{align}	
and 
\begin{align}
	\sum_{n=0}^\infty (-1)^n 
	a_{1^42^3 4^{-4}}(4n+2) q^n=- \frac{f_2^{15}}{f_1^{6}f_4^{6}}T(q)  \preccurlyeq -
	 \sum_{n=0}^\infty q^n.
	 \label{7-47}
\end{align}	
Substituting \eqref{7-23} and \eqref{4-68}
   into \eqref{7-43}, we can prove that 
\begin{align} 	 \label{7-48}
	\sum_{n=0}^\infty  
	a_{1^42^3 4^{-4}}(4n+1) q^n=
	-4\frac{f_2^{15}}{f_1^{8}f_4^4}S(-q)-16q\frac{ f_2^3f_4^4}{f_1^{4}}T(-q)\preccurlyeq -\sum_{n=0}^\infty q^n
\end{align}	
and 
\begin{align} 	 \label{7-49}
	\sum_{n=0}^\infty  
	a_{1^42^3 4^{-4}}(4n+3) q^n=
	 4\frac{f_2^{15}}{f_1^{8}f_4^4}T(-q)+16\frac{ f_2^3f_4^4}{f_1^{4}}S(-q) \succcurlyeq \sum_{n=0}^\infty q^n.
\end{align}

Using  \eqref{4-68} and the same method for proving \eqref{7-42} 
 and \eqref{7-43}, we can prove that 
  \begin{align}\label{7-50}
 	\sum_{n=0}^\infty
 	a_{1^42^4 4^{t_4}}(2n) q^n=f_1^2\cdot \frac{f_2^{10+t_4}}{f_4^4}
 \end{align}	
 and 
 \begin{align}\label{7-51}
 	\sum_{n=0}^\infty
 	a_{1^42^4 4^{t_4}}(2n+1) q^n=-4f_1^2\cdot f_1^4\cdot  \frac{f_4^{4}}{f_2^{2-t_4}}.
 \end{align}	
 Substituting \eqref{4-68}
  into \eqref{7-50}, we can obtain 
   \begin{align}\label{7-52}
 	\sum_{n=0}^\infty
 	a_{1^42^4 4^{t_4}}(4n) 
 	q^n= \frac{f_1^{11+t_4} f_4^5 }{f_2^6f_8^2}
 \end{align}	
 and 
   \begin{align}\label{7-53}
 	\sum_{n=0}^\infty
 	a_{1^42^4 4^{t_4}}(4n+2) q^n=-2
 	  \frac{f_1^{11+t_4}f_8^2 }{f_2^4 f_4}.
 \end{align}	
 Replacing $q$ by $-q$
  in \eqref{7-52}
   and \eqref{7-53}, we deduce that 
    for $t_4\in\{-3,-4\}$, 
\begin{align}\label{7-54}
	\sum_{n=0}^\infty (-1)^n 
	a_{1^42^4 4^{t_4}}(4n) 
	q^n= \frac{ f_2^{27+3t_4}}{f_1^{11+t_4}f_4^{6+t_4}f_8^2}\succcurlyeq 
	\sum_{n=0}^\infty q^n 
\end{align}	
and 
\begin{align}\label{7-55}
	\sum_{n=0}^\infty (-1)^n 
	a_{1^42^4 4^{t_4}}(4n+2) q^n=-2
	\frac{ f_2^{29+3t_4} f_8^2
	 }{f_1^{11+t_4} f_4^{12+t_4}}\preccurlyeq 	-\sum_{n=0}^\infty q^n .
\end{align}	
Substituting \eqref{4-40}
and \eqref{4-68}
 into \eqref{7-51}, we can prove  that for $t_4\in\{-3,-4\}$, 
\begin{align}\label{7-56}
	\sum_{n=0}^\infty  
	a_{1^42^4 4^{t_4}}(4n+1) q^n=-4
	\frac{f_2^{12}f_4}{f_1^{3-t_4}f_8^2}-32q\frac{f_2^2f_4^3f_8^2}{
	f_1^{-1-t_4}}\preccurlyeq 	-\sum_{n=0}^\infty q^n  
\end{align}	
and 
\begin{align}\label{7-57}
	\sum_{n=0}^\infty  
	a_{1^42^4 4^{t_4}}(4n+3) q^n=16
	\frac{f_4^9}{f_1^{-1-t_4}f_8^2}+8\frac{f_2^{14}f_8^2}{
		f_1^{3-t_4}f_4^5} \succcurlyeq   \sum_{n=0}^\infty q^n .
\end{align}	
Theorem \ref{Th-7-4}
 follows from \eqref{7-46}--\eqref{7-49}
  and \eqref{7-54}--\eqref{7-57}. \qed

       	\section{Sign changes with period 12}

		\begin{theorem}\label{Th-8-1}
	For   $n\geq 0$, 
	\begin{align*}
		a_{1^{-3} 2^9 3^{1}    4^{ -6}}(12n)& >0,\quad 
			a_{1^{-3} 2^9 3^{1}    4^{ -6}}(4n+1)>0,\quad  a_{1^{-3} 2^9 3^{1}    4^{ -6}}(4n+3)<0,\\
	a_{1^{-3} 2^9 3^{1}    4^{ -6}}(6n+2) &=a_{1^{-3} 2^9 3^{1}    4^{ -6}}(6n+4)=0, \quad 
		a_{1^{-3} 2^9 3^{1}    4^{ -6}}(12n +6)<0.
	\end{align*}
\end{theorem}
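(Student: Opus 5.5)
Write $a(n)$ for $a_{1^{-3}2^93^14^{-6}}(n)$. The plan is to 2-dissect the generating function with the Baruah--Ojah identity \eqref{4-7}, then handle each half with tools already used above. By \eqref{1-1},
\[
\sum_{n=0}^\infty a(n)q^n=\frac{f_2^9}{f_4^6}\cdot\frac{f_3}{f_1^3}.
\]
Substituting \eqref{4-7} gives
\[
\sum_{n=0}^\infty a(n)q^n=\frac{f_6^3}{f_{12}^2}+3q\frac{f_2^2f_6f_{12}^2}{f_4^4}.
\]
Extracting even and odd powers and replacing $q^2$ by $q$, we expect
\[
\sum_{n=0}^\infty a(2n)q^n=\frac{f_3^3}{f_6^2},\qquad \sum_{n=0}^\infty a(2n+1)q^n=3\frac{f_1^2f_3f_6^2}{f_2^4}.
\]

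\emph{Even part.} The series $f_3^3/f_6^2$ is a power series in $q^3$, namely $F(q^3)$ with $F(q)=f_1^3/f_2^2$. This gives $a(6n+2)=a(6n+4)=0$ at once. It also gives
\[
\sum_{n=0}^\infty a(6n)q^n=\frac{f_1^3}{f_2^2}=\sum_{n=0}^\infty a_{1^32^{-2}}(n)q^n .
\]
By Theorem~\ref{Th-1} in the case $(t_1,t_2,t_3)=(3,-2,0)$, these coefficients are positive at even indices and negative at odd indices. Hence $a(12n)>0$ and $a(12n+6)<0$.

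\emph{Odd part.} I replace $q$ by $-q$ in $3f_1^2f_3f_6^2/f_2^4$, using $(-q;-q)_\infty=f_2^3/(f_1f_4)$ and the consequence $f_3\mapsto f_6^3/(f_3f_{12})$. This should give
\[
\sum_{n=0}^\infty(-1)^na(2n+1)q^n=3\frac{f_2^2f_6^5}{f_1^2f_3f_4^2f_{12}}
=3\cdot\frac{f_2^2}{f_1}\cdot\frac{f_6^5}{f_3^2f_{12}^2}\cdot\frac{f_3}{f_1}\cdot\frac{f_{12}}{f_4^2}.
\]
Each factor has nonnegative coefficients and constant term $1$:
\begin{itemize}
\item $f_2^2/f_1=\sum_{n\ge0} q^{n(n+1)/2}$ by \eqref{2-10-1};
\item $f_6^5/(f_3^2f_{12}^2)=\sum_{n} q^{3n^2}$ by \eqref{2-10};
\item $f_3/f_1=1/(q,q^2;q^3)_\infty\succcurlyeq 1/(1-q)$;
\item $f_{12}/f_4^2=1/\big(f_4\,(q^4,q^8;q^{12})_\infty\big)$.
\end{itemize}
Therefore the product is $\succcurlyeq 3\sum_{n\ge0}q^n$. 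So $(-1)^na(2n+1)>0$, that is, $a(4n+1)>0$ and $a(4n+3)<0$.

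The main obstacle is only bookkeeping. I need to check the exponents after substituting \eqref{4-7} and after the $q\to-q$ transformation, and make sure the odd part factors entirely into series with nonnegative coefficients. If the exponent count comes out differently, I would regroup using the theta products \eqref{2-5}, \eqref{2-6} or the $c_3$ generating function \eqref{2-11} instead.
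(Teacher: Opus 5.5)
Your proposal is correct and follows the paper's proof essentially step for step. It uses the same 2-dissection via \eqref{4-7} (identical to the identity \eqref{8-2-1} the paper cites), the same generating functions $f_1^3/f_2^2$ for $a(6n)$ and $3f_1^2f_3f_6^2/f_2^4$ for $a(2n+1)$, and the same $q\to -q$ step leading to \eqref{8-8}. The only differences are cosmetic: you get the signs of $a(12n)$ and $a(12n+6)$ by citing the case $(3,-2,0)$ of Theorem~\ref{Th-1}, whose product is exactly the one in \eqref{8-7}, and you spell out the nonnegative factorization behind \eqref{8-8}, which the paper only asserts.
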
	

\noindent{\it Proof.} The generating function 
 of  $a_{1^{-3} 2^9 3^{1}    4^{ -6}}(n)$ is
 \begin{align}\label{8-1}
	\sum_{n=0}^\infty   
	a_{1^{-3} 2^9 3^{1}    4^{ -6}}(n)q^n = \frac{f_2^9 f_3}{
	 f_1^3f_4^6  }.
	\end{align}
Xia and Yao \cite[Theorem 3.5]{Xia-Yao-2013} proved that 
\begin{align}\label{8-2-1}
\frac{f_3}{f_1^3}
=\frac{f_4^6f_6^3}{f_2^9f_{12}^2}+3q\frac{ f_4^2f_6f_{12}^2
 }{f_2^7}.
\end{align}
Substituting \eqref{8-2-1} into \eqref{8-1}, we get 
 \begin{align} 
	\sum_{n=0}^\infty   
	a_{1^{-3} 2^9 3^{1}    4^{ -6}}(6n)q^n &= \frac{f_1^3}{
		f_2^2 }, \label{8-3}\\
		\sum_{n=0}^\infty   
		a_{1^{-3} 2^9 3^{1}    4^{ -6}}(6n+2)q^n &=0, \label{8-4}\\
			\sum_{n=0}^\infty   
			a_{1^{-3} 2^9 3^{1}    4^{ -6}}(6n+4)q^n &= 0 \label{8-5}
\end{align}
and 
 \begin{align}\label{8-6}
	\sum_{n=0}^\infty   
	a_{1^{-3} 2^9 3^{1}    4^{ -6}}(2n+1)q^n = 3\frac{f_1^2f_3f_6^2}{
		f_2^4  }.
\end{align}
Replacing $q$ by $-q$ in \eqref{8-3} and \eqref{8-6}, we arrive at 
	 \begin{align}\label{8-7}
	\sum_{n=0}^\infty    (-1)^n 
	a_{1^{-3} 2^9 3^{1}    4^{ -6}}(6n)q^n = \frac{ f_2^5}{
		f_1^2f_4^2  } \cdot \frac{f_2^2}{f_1}\cdot \frac{1}{f_4}\succcurlyeq
		\sum_{n=0}^\infty q^n
	\end{align}
	and 
		 \begin{align}\label{8-8}
	\sum_{n=0}^\infty   
(-1)^n 	a_{1^{-3} 2^9 3^{1}    4^{ -6}}(2n+1)q^n = 3\frac{
	f_2^2 f_6^5 }{f_1^2f_3f_4^2f_{12}}\succcurlyeq \sum_{n=0}^\infty q^n.
	\end{align}
	Theorem \ref{Th-8-1} follows from \eqref{8-4}, \eqref{8-5},
	 \eqref{8-7} and \eqref{8-8}. \qed

	 \section*{Statements and Declarations}
	 
	 \noindent{\bf Acknowledgments}. 
	This work was supported by the Qinglan Project of Jiangsu Province. 
	 
	 \noindent{\bf Competing Interests.}
	 The authors declare that they have
	 no conflict of interest.

	 \noindent{\bf Data Availability.} Data sharing is not applicable to this
	 article as no datasets were generated or analyzed during the current
	 study.

\end{document}